\documentclass[a4paper,oneside,11pt]{article}
\usepackage{a4wide} 
\usepackage{graphicx}
\usepackage[latin1]{inputenc}
\usepackage{amsmath,amssymb,amsthm}
\usepackage{url}
\usepackage{ifthen}
\usepackage[sectionbib]{natbib}
\usepackage{chapterbib}
\usepackage{color}
\usepackage{epstopdf}
\usepackage{caption}
\usepackage{subcaption}
\usepackage{units}
\usepackage{enumitem}
\usepackage{pdfpages}
\usepackage{titling}

\usepackage{array}
\usepackage{hhline}
%
%
%
\newcolumntype{C}[1]{>{\centering}p{#1}}  
\newcolumntype{L}[1]{>{\raggedleft}p{#1}} 
\newcolumntype{R}[1]{>{\raggedright}p{#1}} 

\newcolumntype{Z}[1]{>{\centering\arraybackslash}m{#1}} 
\newcolumntype{I}[1]{>{\raggedleft\arraybackslash}m{#1}} 
\newcolumntype{E}[1]{>{\raggedright\arraybackslash}m{#1}} 

%
\theoremstyle{plain}
\newtheorem{theorem}{Theorem}%
\newtheorem{prop}[theorem]{Proposition}%
\newtheorem{lemma}[theorem]{Lemma}%
\theoremstyle{definition}
\newtheorem{example}{Example}%
\newtheorem*{remark*}{Remark}%
%
%

%

\renewcommand\theta{\vartheta}

\newcommand{\N}{\ensuremath{\mathbb{N}}}%
\newcommand{\Z}{\ensuremath{\mathbb{Z}}}%
\newcommand{\R}{\ensuremath{\mathbb{R}}}%
%
%
%
%
%
%
%
%
%
%
%
%
%
%
%
%
%
%
%
%
%
%
%
%

%
%
%
%
%
%
%

\newcommand{\dif}{\mathrm{d}}

\DeclareMathOperator{\Var}{Var}
\DeclareMathOperator{\Cov}{Cov}

\DeclareMathOperator*{\argmin}{arg\,min}

%

\renewcommand\theta{\vartheta}

\parindent = 0pt
%

\newboolean{comment}
\newcommand{\Kommentar}[1]{%
        \ifthenelse{\boolean{comment}}{#1}{}
        }
\setboolean{comment}{true}

\begin{document}

{\title{Asymptotics for the expected shortfall}}

\author{Tobias Zwingmann\protect\footnote{Email: zwingmann@mathematik.uni-marburg.de} \protect\ and Hajo Holzmann\protect\footnote{Corresponding author, Email: holzmann@mathematik.uni-marburg.de}  \protect\\ \protect\small{Fachbereich Mathematik und Informatik}  \protect\\ \protect\small{Philipps-Universität Marburg, Germany} \protect\\}
\maketitle

\begin{abstract}
	We derive the joint asymptotic distribution of empirical quantiles and expected shortfalls under general conditions on the distribution of the underlying observations. In particular, we do not assume that the distribution function is differentiable at the quantile with strictly positive derivative. Hence the rate of convergence and the asymptotic distribution for the quantile can be non-standard, but our results show that the expected shortfall remains asymptotically normal with a $\sqrt{n}$-rate, and we even give the joint distribution in such non-standard cases. In the derivation we use the bivariate scoring functions for quantile and expected shortfall as recently introduced by \citet{fisszieg2015elicitability}. The main technical issue is to deal with the distinct rates for quantile and expected shortfall when applying the argmax-continuity theorem. We also consider spectral risk measures with finitely-supported spectral measures, and illustrate our results in a simulation study.
\end{abstract}

\section{Introduction}
Value-at-risk (VaR) and expected shortfall (ES) are two popular measures of the risk of a financial position \citep{mcneil2010quantitative}. While the VaR is simply a quantile of the profit-and-loss-distribution, the ES is defined as the average below a certain quantile. Thus, the ES is deemed to be more informative, and indeed fulfills the desirable property of subadditivity which the VaR lacks in general \citep{artzner1999}. On the other hand, the VaR is elicitable \citep{gneiting2011} in the sense that it can be represented as a minimizer of an expected loss, which is, however, not possible for the ES.  

Statistical estimation of a given $\alpha$-quantile, $\alpha \in (0,1)$, is a very-well developed problem. Precise asymptotic expansions, called Bahardur expansions, for the empirical quantile have been developed if the underlying distribution function $F$ has a density which is positive and sufficiently regular at the $\alpha$-quantile \citep{bahadur,kiefer}. This expansion in particular implies the asymptotic normality. In this regular case, an alternative quantile estimator based on a smoothed empirical distribution function has been proposed by \citet{chenTang2005} to improve finite-sample mean-square-error properties. The general case in which the distribution function $F$ is not differentiable at the $\alpha$-quantile or in which its derivative vanishes was studied in \citet{smirnov1952limit} and \citet{Knight2002LimDistr}. Here, non-normal limit distributions and slower rates of convergence than $\sqrt{n}$ occur.  

The ES at level $\alpha$ can be estimated as the empirical average below the empirical $\alpha$-quantile. \citet{scaillet2004} proposed instead to use a smoothed version of this estimator. \citet{chen2008} proved asymptotic normality of these estimators and further showed that no improvement in terms of mean-square-error properties can be expected for the smoothed estimator. Further work on the asymptotic properties of ES estiamtors are \citet{linton2013} and \citet{hill2013} for heavy-tailed distributions, and \citet{peracchi2008}, \citet{taylor2008}, \citet{cai2008} and \citet{kato2012} in a nonparametric regression framework. 

All these papers require that the distribution function is quite regular at its $\alpha$-quantile, having a smooth and positive density as required for asymptotic normality when estimating the quantile. 

Here we show that this assumptions is not required for the expected shortfall, and that the simple estimator of the ES remains normal under the weak assumption that the distribution function is continuous and strictly increasing at its $\alpha$-quantile. We even determine the joint asymptotic distribution of the estimators for $\alpha$-quantile and expected shortfall in this general case. Our approach is based on the argmax-continuity theorem, e.g.~\citet{vdv2000asympstat}, by using the scoring functions for the bivariate parameter, quantile and ES, as recently introduced by \citet{fisszieg2015elicitability}. Because of the different rates of convergence for quantile and ES, application of the argmax-continuity theorem is not straightforward and requires substantial technical effort. 

The paper is organised as follows. In Section \ref{sec:basicmethods} we introduce the expected shortfall and the bivariate scoring function for quantile and expected shortfall, and discuss the resulting minimum-contrast estimators. 
Section \ref{sec:estconsist} presents our results on the joint asymptotic distribution of quantile and expected shortfall, also in the multivariate case for various levels. We further discuss asymptotic properties of estimators of spectral risk measures with finitely supported spectral measures. 
Section \ref{sec:sims} contains simulations in two scenarios, once for a kink in the distribution function, and once for a density with a zero of order two. 
In Section \ref{sec:discuss} we indicate properties of the bootstrap, and also extensions to dependent data. Proofs of the major steps are deferred to Section \ref{sec:proofs}, while some details are relegated to the technical supplement \citet{zwingHolzm2016}.

In the rest of the paper, we use the following notation.
For i.i.d.~observations $X_1, \ldots, X_n$ distributed according to the distribution function $F$, we use the notation \begin{equation*}
\mathbb{E}_n [f(Y)]  =\frac{1}{n}\sum_{i=1}^{n} f(X_i) \qquad \text{ and } \quad \mathbb{G}_n [f(Y)] = \sqrt{n}(\mathbb{E}_n-\mathbb{E})[f(Y)]
\end{equation*}
where $E |f(X_1)| < \infty$. Note that $\mathbb{E}_n$ is the empirical expectation w.r.t. $X_1, \ldots , X_n$. 
Further, we let $X_{1:n} \leq \ldots \leq X_{n:n}$ denote the order statistics of a sample $X_1, \ldots, X_n$. 
We denote by $\Rightarrow$ convergence in distribution. 

%
\section{Estimating quantile and expected shortfall}\label{sec:basicmethods}
Suppose that the random variable $Y$ has distribution function $F$ and satisfies $\mathbb{E}[ |Y|] < \infty$. Given $ \alpha \in (0,1)$ the lower tail expected shortfall of $Y$ at level $\alpha$ is defined by
\[ es_\alpha = \frac{1}{\alpha}\, \int_0^\alpha F^{-1}(u)\, du.\]
For the specific value of $\alpha$ under consideration, we shall always impose the following. 
\medskip

{\bf Assumption.}\quad For the given $\alpha \in (0,1)$, the distribution function $F$ is continuous and strictly increasing at its $\alpha$-quantile $q_\alpha$.

\medskip

Then $F$ has a unique $\alpha$-quantile, and the empirical quantile
\begin{equation*}
\overline{q}_{n,\alpha} = \inf\Big\{x \Big| \sum_{i=1}^{n}1_{X_i\leq x} \geq n\alpha \Big\} = X_{\lceil n\,\alpha\rceil:n},
\end{equation*}
is a consistent estimator for $q_\alpha$. Further, for the expected shortfall we have that

\begin{equation}\label{eq:lowertailes}
es_\alpha = \frac{1}{\alpha}\, \int_{- \infty}^{q_\alpha}\, y\, dF(y).
\end{equation}
Consider the class of strictly consistent scoring functions for the bivariate parameter $(q_\alpha, es_\alpha)$ as introduced by \citet{fisszieg2015elicitability},
\begin{align}\label{eq:s_rep_1}
\begin{split}
S(x_1, x_2; y) & = \big(1_{ y \leq x_1} - \alpha \big) (x_1-y) + G(x_2) \, \big(x_2 + \alpha^{-1}\big(1_{ y \leq x_1}-\alpha\big) (x_1 - y)\,   \big)\\
& \qquad - \mathcal{G}(x_2) - G(x_2)y,
\end{split}
\end{align}
where $\mathcal{G}$ is a three-times continuously differentiable function, $\mathcal{G}' = G$ and it is required that $G' >0$. From the proof of Corollary 5.5 in \citet{fisszieg2015elicitability}, one may choose $\mathcal{G}$ so that $\lim_{x\to -\infty}G(x)=0$. 
Denote the asymptotic contrast function	by
\[	S(x_1, x_2; F) = \mathbb{E} [S(x_1, x_2; Y)],\]
then $S(x_1,x_2;F)$ has a unique minimum in $(q_{\alpha}, es_{\alpha})$. 

Let $X_1, \ldots , X_n$ be i.i.d., distributed according to $F$ with $ \mathbb{E} |X_1| < \infty$. Consider the minimum contrast estimator for the bivariate parameter $(q_\alpha, es_\alpha)$ defined by
\begin{align*}
(\widehat{q}_{n,\alpha}, \widehat{es}_{n,\alpha}) &\in \argmin_{(x_1,x_2)\in\R^2} \sum_{i=1}^{n} S(x_1,x_2;X_i).
\end{align*}

As the proposition below shows, this is, at least approximately, simply another way of representing standard estimators for quantile and expected shortfall. 

\begin{prop}\label{lem:connect_to_emp_vers}
	The estimator $\widehat{q}_{n,\alpha}$ can be chosen equal to the empirical quantile. Further, the estimator $\widehat{es}_{n,\alpha}$ is given by 
	\begin{align}\label{eq:esscoringexpl}
	\begin{split}		
	\widehat{es}_{n,\alpha} & = \argmin_{x_2\in\R}\, \sum_{i=1}^{n} S\big(\widehat{q}_{n,\alpha},x_2;X_i\big)\\
	& = \alpha^{-1}\mathbb{E}_n\big(Y \,1_{Y\leq \widehat{q}_{n,\alpha}}\big) + \widehat{q}_{n,\alpha} \, \big(1 - \frac{1}{\alpha\, n}\sum_{i=1}^{n}1_{X_i\leq \widehat{q}_{n,\alpha}} \big),
	\end{split}
	\end{align}
	and we have that
	\[		\Big|\widehat{es}_{n, \alpha}-\alpha^{-1}\mathbb{E}_n\big(Y \,1_{Y\leq \widehat{q}_{n,\alpha}}\big)\Big|\leq \frac{\widehat{q}_{n,\alpha}}{\alpha \,n}=O_{\mathcal{P}}(n^{-1}).
	\]	
\end{prop}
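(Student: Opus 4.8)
The plan is to exploit the fact that $x_1$ enters the score $S(x_1,x_2;y)$ in \eqref{eq:s_rep_1} only through the check function $L(x_1;y):=(1_{y\le x_1}-\alpha)(x_1-y)$, and then to treat $x_2$ by a one-dimensional first-order argument. Collecting terms in \eqref{eq:s_rep_1},
\[
S(x_1,x_2;y)=\bigl(1+\alpha^{-1}G(x_2)\bigr)\,L(x_1;y)+G(x_2)\,x_2-\mathcal{G}(x_2)-G(x_2)\,y,
\]
so $\sum_{i=1}^{n}S(x_1,x_2;X_i)=A(x_2)\,\Phi(x_1)+B(x_2)$ with $\Phi(x_1)=\sum_{i=1}^{n}L(x_1;X_i)$, $A(x_2)=1+\alpha^{-1}G(x_2)$, and $B(x_2)$ not depending on $x_1$. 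Because $G$ is strictly increasing with $\lim_{x\to-\infty}G(x)=0$ we have $G>0$, hence $A(x_2)>0$ for every $x_2$. Therefore, for each fixed $x_2$, the map $x_1\mapsto\sum_i S(x_1,x_2;X_i)$ is minimised exactly on $\argmin_{x_1}\Phi(x_1)$, independently of $x_2$; and $\Phi$ is the familiar convex, coercive empirical pinball loss, whose minimiser set contains $\overline q_{n,\alpha}=X_{\lceil n\alpha\rceil:n}$. Using the factorisation one checks $\argmin_{(x_1,x_2)}\sum_i S=\bigl(\argmin_{x_1}\Phi\bigr)\times\argmin_{x_2}\bigl(A(x_2)\Phi^\ast+B(x_2)\bigr)$, where $\Phi^\ast=\min\Phi$ and $A(x_2)\Phi^\ast+B(x_2)=\sum_i S(\overline q_{n,\alpha},x_2;X_i)$. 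Hence one may take $\widehat q_{n,\alpha}=\overline q_{n,\alpha}$, and with this choice $\widehat{es}_{n,\alpha}$ coincides with the profiled minimiser $\argmin_{x_2}\sum_i S(\widehat q_{n,\alpha},x_2;X_i)$, the first equality in \eqref{eq:esscoringexpl}.

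For the explicit formula I would differentiate in $x_2$. Since $\mathcal{G}\in C^3$, $G=\mathcal{G}'$ is $C^2$, and
\[
\partial_{x_2}S(x_1,x_2;y)=G'(x_2)\,\bigl(\alpha^{-1}L(x_1;y)+x_2-y\bigr),
\]
so $\partial_{x_2}\sum_i S(\widehat q_{n,\alpha},x_2;X_i)=n\,G'(x_2)\,(x_2-c)$ with $c=\mathbb{E}_n[Y]-\alpha^{-1}\mathbb{E}_n[L(\widehat q_{n,\alpha};Y)]$. As $G'>0$, this derivative is negative for $x_2<c$ and positive for $x_2>c$, so the profiled objective is strictly decreasing then strictly increasing and has the unique minimiser $\widehat{es}_{n,\alpha}=c$ (this also establishes existence of the joint $\argmin$). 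Expanding $\mathbb{E}_n[L(x_1;Y)]=x_1\mathbb{E}_n[1_{Y\le x_1}]-\mathbb{E}_n[Y1_{Y\le x_1}]-\alpha x_1+\alpha\mathbb{E}_n[Y]$ and simplifying gives the closed form in \eqref{eq:esscoringexpl}.

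For the last bound, the closed form yields
\[
\widehat{es}_{n,\alpha}-\alpha^{-1}\mathbb{E}_n\bigl(Y1_{Y\le\widehat q_{n,\alpha}}\bigr)=\widehat q_{n,\alpha}\Bigl(1-\tfrac{1}{\alpha n}\textstyle\sum_{i=1}^{n}1_{X_i\le\widehat q_{n,\alpha}}\Bigr).
\]
Under the Assumption $F$ is continuous, so the $X_i$ are a.s.\ pairwise distinct and $\sum_i 1_{X_i\le X_{\lceil n\alpha\rceil:n}}=\lceil n\alpha\rceil\in[n\alpha,n\alpha+1)$; hence $\bigl|1-\tfrac{1}{\alpha n}\sum_i 1_{X_i\le\widehat q_{n,\alpha}}\bigr|<\tfrac{1}{n\alpha}$, which gives the stated bound $|\widehat q_{n,\alpha}|/(\alpha n)$. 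Since $\widehat q_{n,\alpha}\to q_\alpha$ in probability (consistency of the empirical quantile under the Assumption), $\widehat q_{n,\alpha}=O_{\mathcal{P}}(1)$, so the right-hand side is $O_{\mathcal{P}}(n^{-1})$.

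The only genuinely delicate point is the decoupling in the first paragraph: although $x_1$ sits inside the $G(x_2)$-block of \eqref{eq:s_rep_1}, it does so only through $L(x_1;y)$ multiplied by the strictly positive prefactor $1+\alpha^{-1}G(x_2)$, and this is exactly what lets the joint minimisation split so that the non-smoothness in $x_1$ can be handled by the classical pinball-loss argument alone. Everything else is elementary algebra together with a one-dimensional monotonicity argument, so I do not anticipate further obstacles.
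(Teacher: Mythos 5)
Your proof is correct and follows essentially the same route as the paper: the factorisation $S(x_1,x_2;y)=\big(1+\alpha^{-1}G(x_2)\big)\rho_\alpha(x_1;y)+h(x_2;y)$ to decouple the quantile coordinate (pinball loss), the first-order condition in $x_2$ with $G'>0$ giving the closed form, and the count of observations below the empirical quantile giving the $O_{\mathcal{P}}(n^{-1})$ bound. The only cosmetic difference is your appeal to global continuity of $F$ (the Assumption is only continuity at $q_\alpha$) to rule out ties at $X_{\lceil n\alpha\rceil:n}$; the paper glosses over the same point by taking $\widehat q_{n,\alpha}\in[X_{\lceil n\alpha\rceil:n},X_{\lceil n\alpha\rceil+1:n})$, so this is not a substantive deviation.
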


The {\sl empirical expected shortfall} may be defined to be
\begin{equation*}
\alpha^{-1}\mathbb{E}_n\big(Y \,1_{Y\leq \widehat{q}_{n,\alpha}}\big)=	\frac{1}{\alpha\,n}\sum_{i=1}^{n}1_{X_i\leq \widehat{q}_{n,\alpha}} X_i  . 
\end{equation*}
As the proposition shows, the estimator $\widehat{es}_{n,\alpha}$ is, up to a term of order $O_{\mathcal{P}}(n^{-1})$, equal to this empirical expected shortfall. Thus, its asymptotic properties will be identical to those of $\widehat{es}_{n,\alpha}$.

\section{Joint asymptotic theory for quantile and expected shortfall}\label{sec:estconsist}

We start the asymptotic analysis by providing a general consistency result. 

\begin{prop}\label{thm:emp_vers_cons}
	Let $q_n$ be a consistent estimator of $q_{\alpha}$. Then the estimators 
	\begin{equation*}
	\alpha^{-1} \mathbb{E}_n\big[Y \,1_{Y\leq q_{n}}\big] \quad\text{and}\quad 	\widetilde{es}_{n, \alpha} = \argmin_{x_2\in\R} \sum_{i=1}^{n} S(q_{n}, x_2; X_i).
	\end{equation*}
	are consistent for $es_{\alpha}$. In particular, $\big(\widehat{q}_{n,\alpha},\widehat{es}_{n, \alpha}\big)$ is consistent. 	
\end{prop}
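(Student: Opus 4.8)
The plan is to reduce the claim to two facts: first, that $\alpha^{-1}\mathbb{E}_n[Y\,1_{Y\le q_n}]$ is consistent for $es_\alpha$ whenever $q_n\to q_\alpha$ in probability; and second, that the argmin estimator $\widetilde{es}_{n,\alpha}$ differs from this empirical object only by a negligible term. The second point is essentially Proposition~\ref{lem:connect_to_emp_vers}: the map $x_2 \mapsto \sum_i S(q_n,x_2;X_i)$ is, by \eqref{eq:s_rep_1}, a smooth strictly convex function of $x_2$ (since $G'>0$ makes $\mathcal{G}$ strictly convex), so its minimiser is characterised by a first-order condition, and solving it gives exactly the closed form in \eqref{eq:esscoringexpl} with $\widehat q_{n,\alpha}$ replaced by $q_n$. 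Hence $\widetilde{es}_{n,\alpha} = \alpha^{-1}\mathbb{E}_n[Y\,1_{Y\le q_n}] + q_n(1 - (\alpha n)^{-1}\sum_i 1_{X_i\le q_n})$, and the correction term is $O_{\mathcal P}(n^{-1})$ by the same bound as in the proposition, so it suffices to treat $\alpha^{-1}\mathbb{E}_n[Y\,1_{Y\le q_n}]$.

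For the first point, I would write
\[
\mathbb{E}_n\big[Y\,1_{Y\le q_n}\big] - \mathbb{E}\big[Y\,1_{Y\le q_\alpha}\big]
= \big(\mathbb{E}_n - \mathbb{E}\big)\big[Y\,1_{Y\le q_n}\big] + \mathbb{E}\big[Y\,1_{Y\le q_n}\big] - \mathbb{E}\big[Y\,1_{Y\le q_\alpha}\big].
\]
The deterministic term tends to $0$: since $F$ is continuous at $q_\alpha$, $1_{y\le q_n}\to 1_{y\le q_\alpha}$ for $F$-a.e.\ $y$ on the event $q_n\to q_\alpha$, and $|Y\,1_{Y\le q_n}|\le |Y|\in L^1$, so dominated convergence (applied along the a.s.-convergent subsequence, then passing back to convergence in probability) gives $\mathbb{E}[Y\,1_{Y\le q_n}]\to\mathbb{E}[Y\,1_{Y\le q_\alpha}] = \alpha\, es_\alpha$ by \eqref{eq:lowertailes}. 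The stochastic term is the one requiring care: I would control it uniformly over a shrinking neighbourhood of $q_\alpha$. Fix $\varepsilon>0$; on the high-probability event $\{|q_n - q_\alpha|\le\varepsilon\}$ one has
\[
\big|(\mathbb{E}_n - \mathbb{E})[Y\,1_{Y\le q_n}]\big|
\le \sup_{|t - q_\alpha|\le\varepsilon} \big|(\mathbb{E}_n-\mathbb{E})[Y\,1_{Y\le t}]\big|,
\]
and the right-hand side converges to $0$ in probability. This last supremum bound follows from a uniform law of large numbers over the class $\{y\mapsto y\,1_{y\le t} : t\in\R\}$, which is a VC-subgraph class with envelope $|Y|\in L^1$; alternatively one can split $Y = Y^+ - Y^-$ and argue monotonicity (Glivenko--Cantelli-type) for each piece together with integrability. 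Combining the two terms gives $\mathbb{E}_n[Y\,1_{Y\le q_n}]\to \alpha\,es_\alpha$ in probability, hence consistency of $\alpha^{-1}\mathbb{E}_n[Y\,1_{Y\le q_n}]$.

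Finally, the ``in particular'' statement is immediate: by Proposition~\ref{lem:connect_to_emp_vers} the empirical quantile $\widehat q_{n,\alpha}=\overline q_{n,\alpha}=X_{\lceil n\alpha\rceil:n}$ is a valid choice of $\widehat q_{n,\alpha}$, and under the standing Assumption it is consistent for $q_\alpha$; then $\widehat{es}_{n,\alpha}=\widetilde{es}_{n,\alpha}$ with $q_n=\widehat q_{n,\alpha}$, so consistency follows from the general case. The main obstacle is the stochastic-equicontinuity step, i.e.\ showing the supremum of $(\mathbb{E}_n-\mathbb{E})[Y\,1_{Y\le t}]$ over a neighbourhood of $q_\alpha$ is $o_{\mathcal P}(1)$ with only an $L^1$ moment; everything else is bookkeeping. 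Care is also needed because $q_n$ is random and data-dependent, so one cannot simply plug it in — the uniform bound over a deterministic neighbourhood, combined with consistency of $q_n$, is what makes the substitution legitimate.
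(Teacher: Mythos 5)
Your overall strategy is sound and close to the paper's, but there is one genuine flaw: the treatment of the correction term for $\widetilde{es}_{n,\alpha}$. The first-order condition does give $\widetilde{es}_{n,\alpha} = \alpha^{-1}\mathbb{E}_n\big[Y\,1_{Y\le q_n}\big] + \alpha^{-1}q_n\big(\alpha - \mathbb{E}_n[1_{Y\le q_n}]\big)$, but your claim that the second term is $O_{\mathcal{P}}(n^{-1})$ ``by the same bound as in the proposition'' does not hold for a general consistent estimator $q_n$. The $O_{\mathcal{P}}(n^{-1})$ bound in Proposition \ref{lem:connect_to_emp_vers} rests entirely on the identity $\mathbb{E}_n[1_{Y\le \widehat{q}_{n,\alpha}}] = \lceil n\alpha\rceil/n$, i.e.\ on $\widehat{q}_{n,\alpha}$ lying in $[X_{\lceil n\alpha\rceil:n}, X_{\lceil n\alpha\rceil+1:n})$. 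An arbitrary consistent $q_n$ (a smoothed quantile estimator, or even the deterministic sequence $q_\alpha + n^{-1/4}$) satisfies no such identity: there $\alpha - \mathbb{E}_n[1_{Y\le q_n}]$ is of the order of $F(q_n)-\alpha$ plus an $O_{\mathcal{P}}(n^{-1/2})$ fluctuation, which can tend to zero arbitrarily slowly. What is true, and all that is needed, is $\alpha - \mathbb{E}_n[1_{Y\le q_n}] = o_{\mathcal{P}}(1)$ (then tightness of $q_n$ finishes it); this is exactly what the paper proves, and you can repair your argument with tools you already invoke, e.g.\ $\sup_t\big|\mathbb{E}_n[1_{Y\le t}]-F(t)\big|\to 0$ together with continuity of $F$ at $q_\alpha$ and $q_n\to q_\alpha$. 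A smaller quibble: the criterion $x_2\mapsto\sum_i S(q_n,x_2;X_i)$ need not be convex; its $x_2$-derivative is $G'(x_2)\sum_i\big(x_2-q_n+\alpha^{-1}1_{X_i\le q_n}(q_n-X_i)\big)$, which changes sign exactly once because $G'>0$ — that, not strict convexity of $\mathcal{G}$, is what legitimises the first-order condition; the resulting closed form is nevertheless the same as the paper's.

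For the main term your argument is correct but heavier than the paper's. You split off $(\mathbb{E}_n-\mathbb{E})[Y\,1_{Y\le q_n}]$ and control it uniformly over a neighbourhood of $q_\alpha$ via a Glivenko--Cantelli/VC-subgraph argument with envelope $|Y|\in L^1$, plus a continuity argument for the bias term; this works. The paper instead considers $\mathbb{E}_n\big[Y(1_{Y\le q_n}-1_{Y\le q_\alpha})\big]$ directly, observes that on $\{|q_n-q_\alpha|\le\delta\}$ (with $\delta\le 1$) the integrand vanishes unless $|Y-q_\alpha|\le\delta$, so that $|Y|$ is bounded there by a constant $c$, and thereby reduces everything to the ordinary law of large numbers for the two fixed indicators $1_{Y\le q_\alpha\pm\delta}$ together with continuity of $F$ at $q_\alpha$ — no uniform empirical-process result is needed. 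Both routes prove the statement; yours trades elementarity for a uniform bound you would anyway need to justify carefully (the $y^{\pm}$ splitting you sketch is the standard way to do so).
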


Now we turn to the joint asymptotic distribution of quantile and expected shortfall.

One major issue is to include the case of low regularity of $F$ at its $\alpha$-quantile $q_\alpha$. In particular, we do not impose the standard assumption that $F$ has a positive derivative at $q_\alpha$. 
In such more general situations, the limit distribution for the empirical quantile has been analyzed in \citet{smirnov1952limit} and \citet{Knight2002LimDistr}. 

Consider the following assumption, taken from \citet{smirnov1952limit} and \citet{Knight2002LimDistr}. 

\medskip

\textbf{Assumption [A]}: There exists a function $\psi_{\alpha}:\R\to\overline{\R}$ with 
\begin{align*}
\lim_{t\to\infty} \psi_{\alpha}(t)=\infty, \qquad 
\lim_{t\to -\infty}\psi_{\alpha}(t)=-\infty
\end{align*}
such that for some deterministic, positive sequence $(a_n)_n$ with $a_n \to \infty$ it holds that \begin{equation*}
\lim_{n\to\infty} \sqrt{n}\big[F(q_{\alpha}+t/a_n)-F(q_{\alpha}) \big] = \psi_{\alpha}(t).
\end{equation*}

\medskip

The following proposition, which is mainly taken from \citet{smirnov1952limit}, recalls the classification of the functions  $\psi_{\alpha}$ which may occur in Assumption [A] and further shows that, if the empirical $\alpha$-quantile is a consistent estimator for $q_\alpha$, then Assumption [A] can always be satisfied with a degenerate choice for the function $\psi_{\alpha}$. 
\begin{prop}\label{prop:theassumptionA}
	a.~The function $\psi_{\alpha}$ in Assumption [A] is necessarily of one of the forms
	\begin{align*}
	\psi_{\alpha}(t) &= \begin{cases}
	\kappa_+ t^{\beta} &\mbox{ if } t\geq 0\\
	-\infty &\mbox{ if } t<0, 
	\end{cases}\qquad \quad 
	\psi_{\alpha}(t) = \begin{cases}
	-\kappa_- (-t)^{\beta} &\mbox{ if } t\leq 0\\
	\infty &\mbox{ if } t>0, 
	\end{cases}\\
	\psi_{\alpha}(t) &= \begin{cases}
	-\kappa_- (-t)^{\beta} &\mbox{ if } t\leq 0\\
	\kappa_+t^{\beta} &\mbox{ if } t>0, 
	\end{cases}\qquad 
	\psi_{\alpha}(t) = \begin{cases}
	-\infty &\mbox{ if } t<-c_1 \\
	0 &\mbox{ if } -c_1 \leq t \leq c_2\\
	\infty &\mbox{ if } t>c_2, 
	\end{cases}
	\end{align*}
	where $\kappa_+ , \kappa_- , \beta>0$ and $c_1,c_2\geq 0$. Moreover, except for the last case with $c_1 = c_2 = 0$, i.e.~$\psi_\alpha(t) = \infty \,\cdot \text{sign}\,(t)$ with $\infty \cdot 0 = 0$, the sequence $a_n$ is uniquely determined up to asymptotic equivalence. \\
	b. If the empirical $\alpha$-quantile is consistent, so that $(X_{\lceil n\,\alpha \rceil:n}-q_{\alpha})=o_{\mathcal{P}}(1)$, then there exists a sequence $(a_n)$ for which Assumption [A] is satisfied for the limit function $\psi_\alpha(t) = \infty \,\cdot \text{sign}(t)$. 
\end{prop}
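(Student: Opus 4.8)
The plan is to follow \citet{smirnov1952limit}, working throughout with the monotone function $G(\epsilon):=F(q_\alpha+\epsilon)-F(q_\alpha)$, which under the standing assumption satisfies $G(\epsilon)>0$ for $\epsilon>0$, $G(\epsilon)<0$ for $\epsilon<0$ and $G(0\pm)=0$. For part~a write $g_n(t)=\sqrt n\,G(t/a_n)$; each $g_n$ is nondecreasing with $g_n(0)=0$, so the pointwise limit $\psi_\alpha$ is nondecreasing, vanishes at $0$, is $\ge 0$ on $[0,\infty)$ and $\le 0$ on $(-\infty,0]$, and, together with $\psi_\alpha(\pm\infty)=\pm\infty$, is already pinned down coarsely. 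I would then analyse $t>0$ and $t<0$ separately. On $\{t>0\}$: if $\psi_\alpha(t_0)\in(0,\infty)$ for some $t_0$, replace $a_n$ by $a_n/t_0$ so that $\psi_\alpha(1)=\kappa_+\in(0,\infty)$; then with $s_n=1/a_n\downarrow0$ one gets $G(ts_n)/G(s_n)\to\chi(t):=\psi_\alpha(t)/\kappa_+$ for all $t>0$, and since $G$ is monotone the classical (sequential) characterisation of regular variation forces $\chi(t)=t^\beta$ for a single $\beta$, with $\beta\in(0,\infty)$ because $\beta=0$ would give $\psi_\alpha\equiv\kappa_+$ (contradicting $\psi_\alpha(\infty)=\infty$) and $\beta=\infty$ would violate $\chi(1)=1$; this also shows $\chi$ is finite everywhere, so $\psi_\alpha(t)=\kappa_+t^\beta$ on $[0,\infty)$. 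If instead $\psi_\alpha$ takes no finite positive value on $(0,\infty)$, monotonicity alone yields $\psi_\alpha(t)=\infty\cdot\mathbf 1\{t>c_2\}$ with $c_2=\sup\{t\ge0:\psi_\alpha(t)=0\}\in[0,\infty)$. The mirror statements hold on $(-\infty,0]$. Gluing the two one-sided descriptions under the \emph{same} $a_n$ then gives exactly the four listed forms: if one side is a nondegenerate power, $a_n$ is forced to grow at a polynomial rate (up to slow variation), which forces the other side to be either the matching power with the same exponent (Form~3) or identically $\pm\infty$ (Forms~1--2) --- an unequal exponent would otherwise produce a half-line of zeros, contradicting $\psi_\alpha(\pm\infty)=\pm\infty$; and if neither side is a power, both are plateaus (Form~4).

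For the uniqueness assertion, let $\rho_n:=G^{\leftarrow}(n^{-1/2})$ denote the generalised inverse of $G|_{(0,\infty)}$ at $n^{-1/2}$; this is well defined and $\rho_n\downarrow0$ because $G>0$ on $(0,\infty)$. In the plateau case with $c_2>0$ (or symmetrically $c_1>0$), reading off $\sqrt n\,G(c_2(1-\eta)/a_n)\to0$ and $\sqrt n\,G(c_2(1+\eta)/a_n)\to\infty$ for every $\eta\in(0,1)$ traps $a_n\rho_n$ between $c_2(1-\eta)$ and $c_2(1+\eta)$ for large $n$, so $a_n\sim c_2/\rho_n$. In the power cases, $\sqrt n\,G(t/a_n)\to\kappa_+t^\beta$ makes $G$ regularly varying at $0^+$ of index $\beta$, hence $G^{\leftarrow}$ of index $1/\beta$, and $G(1/a_n)\sim\kappa_+ n^{-1/2}$ gives $a_n\sim\kappa_+^{-1/\beta}/\rho_n$. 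Either way $a_n$ is determined by $\psi_\alpha$ up to asymptotic equivalence. This breaks down precisely for $\psi_\alpha(t)=\infty\cdot\sign(t)$: if $a_n$ works and $a_n'\le a_n$ with $a_n'\to\infty$, then $a_n'$ also works, so the admissible normalisations form a downward-closed family containing arbitrarily slowly growing members.

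For part~b, set $g(\epsilon):=\min\{F(q_\alpha+\epsilon)-F(q_\alpha),\,F(q_\alpha)-F(q_\alpha-\epsilon)\}$ for $\epsilon>0$. Strict positivity of $g$ on $(0,\infty)$ is exactly the ``strictly increasing at $q_\alpha$'' part of the standing assumption --- which is in any case implied by consistency of the empirical quantile, since $F(q_\alpha-\epsilon)<\alpha<F(q_\alpha+\epsilon)$ must hold for all $\epsilon>0$, otherwise the binomial probabilities governing $\prob(X_{\lceil n\alpha\rceil:n}>q_\alpha+\epsilon)$ or $\prob(X_{\lceil n\alpha\rceil:n}<q_\alpha-\epsilon)$ would fail to vanish --- while $g(0^+)=0$ follows from continuity at $q_\alpha$. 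Now a diagonal construction does it: pick $n_1<n_2<\cdots$ with $g^{\leftarrow}(k\,n^{-1/2})<k^{-2}$ for all $n\ge n_k$ (possible since $g^{\leftarrow}(k\,n^{-1/2})\to0$ as $n\to\infty$ for fixed $k$), and put $a_n:=k$ for $n_k\le n<n_{k+1}$, so $a_n\to\infty$. For any fixed $t\ne0$ and any $c>0$, as soon as $n$ lies in a block with $k\ge\max\{c,1/|t|\}$ we have $|t|/a_n=|t|/k\ge k^{-2}>g^{\leftarrow}(k\,n^{-1/2})\ge g^{\leftarrow}(c\,n^{-1/2})$, hence $g(|t|/a_n)\ge c\,n^{-1/2}$ and therefore $|\sqrt n\,(F(q_\alpha+t/a_n)-F(q_\alpha))|\ge c$ with the sign of $t$; letting $c\to\infty$ gives $\sqrt n\,(F(q_\alpha+t/a_n)-F(q_\alpha))\to\infty\cdot\sign(t)$, which is Assumption [A] for the degenerate limit, and the claim follows.

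The main obstacle is the classification in part~a, specifically the step that excludes every limit other than a power on its finiteness domain --- here the monotonicity of $G$ together with the sequential regular-variation (multiplicative Cauchy) argument does the real work --- and the bookkeeping needed to reconcile the two one-sided behaviours under a single normalising sequence. The uniqueness claim and part~b are then comparatively routine once the generalised-inverse description of $a_n$ and a standard diagonalisation are in hand.
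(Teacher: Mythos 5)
Your route is genuinely different from the paper's. For part a the paper does not reprove the classification at all: it cites \S 4 of \citet{smirnov1952limit} and gets uniqueness of $(a_n)$ from the convergence-of-types theorem together with the nondegenerate limit law of $a_n(\widehat{q}_{n,\alpha}-q_\alpha)$; for part b it invokes Smirnov's Theorem~4, which characterises consistency of $X_{\lceil n\alpha\rceil:n}$ by exactly the convergence required in [A] with the degenerate limit. You instead reprove the classification via monotonicity and regular variation, pin $a_n$ down through the generalised inverse $G^{\leftarrow}(n^{-1/2})$, and in part b build a slowly growing, piecewise-constant $a_n$ by a diagonal construction. Your part b is correct and complete (it only uses continuity and strict increase of $F$ at $q_\alpha$, which is exactly the standing assumption), and your uniqueness computation for a fixed non-degenerate $\psi_\alpha$ is sound, including the correct explanation of why uniqueness fails for $\psi_\alpha=\infty\cdot\sign(t)$. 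What your approach buys is self-containedness; what the paper's citation buys is brevity and the fact that Smirnov has already done the delicate case analysis.

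That case analysis is where your sketch still has two real soft spots. First, the sequential characterisation of regular variation for monotone $G$ requires a regularity condition on the sequence $s_n=1/a_n$ (e.g.\ $s_{n+1}/s_n\to 1$); for an arbitrary $a_n\to\infty$ this is not automatic. It does hold here, but you have to derive it: since $\sqrt{n+1}/\sqrt{n}\to 1$ and the limit is finite and nonzero at some $t_0$, a convergence-of-types argument gives $a_{n+1}/a_n\to 1$, and only then does the criterion yield the multiplicative Cauchy equation and $\chi(t)=t^\beta$ (finiteness of $\chi$ on all of $(0,\infty)$ is part of what must be proved, not a by-product). Second, and more substantively, your gluing step only rules out two unequal power exponents; it does not rule out the hybrid form ``$\kappa_+t^\beta$ for $t>0$, but $0$ on $[-c_1,0]$ and $-\infty$ below $-c_1$ with $c_1\in(0,\infty)$'', which is absent from Smirnov's list yet perfectly compatible with $\psi_\alpha(\pm\infty)=\pm\infty$, so your stated reason does not apply. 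To exclude it you must again use the single normalisation: the power side makes $a_n$ regularly varying in $n$ of index $1/(2\beta)$, so $a_{2n}/a_n\to\rho:=2^{1/(2\beta)}>1$; writing $h(s)=|G(-s)|$ and picking $1<\lambda<\rho$ and $\mu\in(\lambda/\rho,1)$, the point $x_n=\lambda c_1/a_{2n}$ satisfies $x_n\le\mu c_1/a_n$ eventually, whence $\sqrt{2n}\,h(x_n)\le\sqrt{2}\,\sqrt{n}\,h(\mu c_1/a_n)\to 0$, contradicting the requirement $\sqrt{2n}\,h(\lambda c_1/a_{2n})\to\infty$ coming from the cutoff at $-c_1$. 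With these two repairs your part a argument closes; as written, the classification step is incomplete precisely at the point where the interaction of the two one-sided behaviours through the common $a_n$ carries the load.
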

Here, sequences of positive numbers $(a_n)$ and $(b_n)$ are asymptotically equivalent if $a_n / b_n \to 1$, $n \to \infty$. Part b.~of the proposition implies that Assumption [A] imposes no additional general restrictions if $F$ is strictly increasing and continuous at its $\alpha$-quantile.

\begin{example}\label{ex:expansion_around_quantile}

	Assume that there exists an $\varepsilon>0$ and functions $\kappa_+,\,\kappa_-$, which are continuous in $q_{\alpha}$ with $\kappa_+(q_{\alpha}),\, \kappa_-(q_{\alpha})\ne 0$ and fulfill
	\begin{align*}
	F(x)-\alpha &= (x-q_{\alpha})^{r+1}\,\kappa_+(x), \quad \text{for } x\in[q_{\alpha}, q_{\alpha}+\varepsilon),\\
	F(x)-\alpha&= (q_{\alpha}-x)^{l+1}\,\kappa_-(x), \quad \text{for } x\in(q_{\alpha}-\varepsilon, q_{\alpha}]
	\end{align*}
	for some $r,l\in(-1,\infty)$. 
	For example, if $F$ has a density with a root of order $k\in \N_0$ in its $\alpha$-quantile, these assertions are met; see Sections \ref{subsec:jump_disc} and \ref{subsec:root_order_two}. 
	
	Since we assume strict monotonicity of $F$ in $q_\alpha$, we must have $\kappa_+(x)>0$ for $x\in(q_{\alpha}, q_{\alpha}+\varepsilon)$ and hence $\kappa_+(q_\alpha) >0$ as well (it is $\not=0$ by assumption), and similarly $\kappa_-(x)<0$ for $x\in(q_{\alpha}-\varepsilon, q_{\alpha}]$. Then for $t>0$, setting $a_n^r=n^{\nicefrac{1}{2(r+1)}}$ we have that 
	$\nicefrac{t}{a_n^r}\in [q_{\alpha}, q_{\alpha}+\varepsilon)$ for $n$ big enough, hence
	\begin{equation}\label{eq:conv_right_side_correct}
	\sqrt{n}\Big(F\big(q_{\alpha}+t/a_n^r\big)-F\big(q_{\alpha}\big)\Big) = \sqrt{n}\,\kappa_+\big(q_{\alpha}+t/a_n^r\big) \,\frac{t^{r+1}}{\sqrt{n}} \rightarrow \kappa_+(q_{\alpha})t^{r+1} >0, \quad n \to \infty.  
	\end{equation}
	Similarly for $t<0$ and $a_n^l = n^{\nicefrac{1}{2(l+1)}}$ we have that
	\begin{equation}\label{eq:conv_left_side_correct}
	\sqrt{n}\Big(F\big(q_{\alpha}+t/a_n^l\big)-F\big(q_{\alpha}\big)\Big) \longrightarrow \kappa_-(q_{\alpha})(-t)^{l+1}<0.
	\end{equation}
	
	Now if $r=l$, we can choose $a_n = n^{\nicefrac{1}{2(r+1)}}$ and 
	\begin{equation*}
	\psi_{\alpha}(t) = \begin{cases}
	\kappa_-\big(q_{\alpha}\big)\,(-t)^{r+1}, &\mbox{ if } t\leq 0,\\
	\kappa_+\big(q_{\alpha}\big)\,t^{r+1}, &\mbox{ if } t> 0.
	\end{cases}
	\end{equation*}
	Then the sequence $(a_n)$ together with the function $\psi_{\alpha}$ fulfill Assumption [A]. 
	
	If $r>l$, for $t<0$ choosing $a_n = n^{\nicefrac{1}{2(r+1)}}$ we have for $n$ big enough that 
	\begin{equation}\label{eq:conv_left_side_wrong}
	\sqrt{n}\Big(F\big(q_{\alpha}+t/a_n^r\big)-F\big(q_{\alpha}\big)\Big) = n^{\nicefrac{r-l}{2(r+1)}}\,\kappa_-\big(q_{\alpha}+t/a_n^r\big) \, (-t)^{l+1} \longrightarrow -\infty.
	\end{equation}
	Thus, Assumption [A] is then satisfied for $a_n = n^{\nicefrac{1}{2(r+1)}}$ and \begin{equation*}
	\psi_{\alpha}(t) = \begin{cases}
	-\infty, &\mbox{ if } t< 0,\\
	\kappa_+\big(q_{\alpha}\big)\,t^{r+1}, &\mbox{ if } t\geq 0.
	\end{cases}
	\end{equation*}
	
	The case $l>r$ is treated similarly. 
	
\end{example}

The second assumption will guarantee the existence of a limit variance for the estimator $\widehat{es}_{n,\alpha}$. 

\medskip

\textbf{Assumption [B]}: It holds that $\mathbb{E} \big[1_{Y \leq 0}\, Y^2\big]<\infty$.

\medskip

Now we may state our main result. 

\begin{theorem}\label{thm:main_thm_es}
	
	Under Assumptions [A] and [B], we have that 
	\[ \big(a_n(\widehat{q}_{n,\alpha}-q_{\alpha}), \sqrt{n}(\widehat{es}_{n,\alpha}-es_{\alpha})\big) \Rightarrow \big(\psi^{\leftrightarrow}_{\alpha}( W_1), W_2 \big),\]
	where $(W_1,W_2)$ are jointly normally distributed,  
	\begin{equation*}
	(W_1,W_2)\sim \mathcal{N}(0,\Sigma), \qquad 	\Sigma=\begin{pmatrix}
	\alpha(1-\alpha) && (1-\alpha)(q_{\alpha}-es_{\alpha})\\
	(1-\alpha)(q_{\alpha}-es_{\alpha}) && \Var\big(1_{Y\leq q_{\alpha}}(q_{\alpha}-Y)/\alpha\big)
	\end{pmatrix}
	\end{equation*}
	and
	\begin{equation}\label{eq:formulainvfunct}
	\psi_{\alpha}^{\leftrightarrow}(x)=\begin{cases}
	\inf\{t\leq 0 \,|\, \psi_{\alpha}(t)\geq x\} &\mbox{if } x<0\\
	0 &\mbox{if } x=0\\
	\sup\{t\geq 0 \,|\, \psi_{\alpha}(t)\leq x\} &\mbox{if } x>0.
	\end{cases}
	\end{equation}
	
\end{theorem}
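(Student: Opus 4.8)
The plan is to reduce to standard estimators, treat the quantile and the expected shortfall by separate localizations at their own rates, and only then combine. First, by Proposition~\ref{lem:connect_to_emp_vers} I would take $\widehat q_{n,\alpha}=X_{\lceil n\alpha\rceil:n}$ and replace $\widehat{es}_{n,\alpha}$ by the empirical expected shortfall $\alpha^{-1}\mathbb{E}_n[Y1_{Y\le\widehat q_{n,\alpha}}]$, which differs from it by $O_{\mathcal P}(n^{-1})$ and hence agrees with it after multiplication by $\sqrt n$. For the quantile I would use that $X_{\lceil n\alpha\rceil:n}$ minimizes the piecewise-linear $x_1$-part $x_1\mapsto\sum_i(1_{X_i\le x_1}-\alpha)(x_1-X_i)$ of $S$, so that $a_n(\widehat q_{n,\alpha}-q_\alpha)$ is the argmin over $s$ in compacts of
\[
\mathbb Z_n(s)=\tfrac{a_n}{\sqrt n}\sum_{i=1}^{n}\Big[(1_{X_i\le q_\alpha+s/a_n}-\alpha)(q_\alpha+s/a_n-X_i)-(1_{X_i\le q_\alpha}-\alpha)(q_\alpha-X_i)\Big].
\]
Knight's identity decomposes $\mathbb Z_n(s)$ into a linear stochastic term $s\,\mathbb G_n[1_{Y\le q_\alpha}]$, the deterministic drift $\int_0^s\sqrt n\,(F(q_\alpha+u/a_n)-\alpha)\,du$, and a remainder $\int_0^s(\mathbb G_n[1_{Y\le q_\alpha+u/a_n}]-\mathbb G_n[1_{Y\le q_\alpha}])\,du$ which is $o_{\mathcal P}(1)$ uniformly on compacts because the classical empirical process is Donsker and $F$ is continuous at $q_\alpha$; by Assumption~[A] and Dini's theorem (monotonicity of $u\mapsto F(q_\alpha+u/a_n)$, continuity of $\psi_\alpha$) the drift converges uniformly on compacts to $\int_0^s\psi_\alpha$. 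Hence $\mathbb Z_n(\cdot)=s\,\mathbb G_n[1_{Y\le q_\alpha}]+\int_0^s\psi_\alpha+o_{\mathcal P}(1)$ uniformly on compacts, with convex limit that, since $F$ is strictly increasing at $q_\alpha$, is strictly convex at its a.s.\ unique, well-separated minimum; a case analysis over the four shapes of $\psi_\alpha$ in Proposition~\ref{prop:theassumptionA} identifies this minimum as $\psi_\alpha^{\leftrightarrow}(-\lim\mathbb G_n[1_{Y\le q_\alpha}])$, and in the degenerate shape $\psi_\alpha=\infty\cdot\sign$ one argues directly that $a_n(\widehat q_{n,\alpha}-q_\alpha)=o_{\mathcal P}(1)$, matching $\psi_\alpha^{\leftrightarrow}\equiv0$.

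For the expected shortfall I would exploit that continuity of $F$ forces the exact identity $F_n(\widehat q_{n,\alpha})=\lceil n\alpha\rceil/n=\alpha+O(n^{-1})$, whence $\sqrt n(F(\widehat q_{n,\alpha})-\alpha)=-\mathbb G_n[1_{Y\le\widehat q_{n,\alpha}}]+o_{\mathcal P}(1)=-\mathbb G_n[1_{Y\le q_\alpha}]+o_{\mathcal P}(1)$ by equicontinuity. Splitting $\alpha^{-1}\mathbb{E}_n[Y1_{Y\le\widehat q_{n,\alpha}}]-es_\alpha=\alpha^{-1}n^{-1/2}\mathbb G_n[Y1_{Y\le\widehat q_{n,\alpha}}]+\alpha^{-1}\int_{q_\alpha}^{\widehat q_{n,\alpha}}y\,dF(y)$, the class $\{y\mapsto y1_{y\le x}\}$ is Donsker and $L^2$-continuous at $q_\alpha$ (finite envelope by Assumption~[B]), so $\mathbb G_n[Y1_{Y\le\widehat q_{n,\alpha}}]=\mathbb G_n[Y1_{Y\le q_\alpha}]+o_{\mathcal P}(1)$, and a first-order expansion of the deterministic integral gives $\sqrt n\int_{q_\alpha}^{\widehat q_{n,\alpha}}y\,dF(y)=q_\alpha\sqrt n(F(\widehat q_{n,\alpha})-\alpha)+O_{\mathcal P}\big(\sqrt n\,|\widehat q_{n,\alpha}-q_\alpha|\,|F(\widehat q_{n,\alpha})-\alpha|\big)$, whose remainder is $o_{\mathcal P}(1)$ no matter what $a_n$ is. Collecting terms,
\[
\sqrt n\,\big(\widehat{es}_{n,\alpha}-es_\alpha\big)=\alpha^{-1}\mathbb G_n\big[(Y-q_\alpha)1_{Y\le q_\alpha}\big]+o_{\mathcal P}(1).
\]

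To combine, I would observe that $\mathbb Z_n(\cdot)$ and $\sqrt n(\widehat{es}_{n,\alpha}-es_\alpha)$ are, up to $o_{\mathcal P}(1)$, continuous images in $\ell^\infty(K)\times\R$ (for compact $K$) of the single vector $V_n=(\mathbb G_n[1_{Y\le q_\alpha}],\,\mathbb G_n[(Y-q_\alpha)1_{Y\le q_\alpha}])$, which by the multivariate CLT converges to a centred bivariate normal whose covariance — from $\Var(1_{Y\le q_\alpha})=\alpha(1-\alpha)$, $\Cov(1_{Y\le q_\alpha},(Y-q_\alpha)1_{Y\le q_\alpha})=(1-\alpha)\alpha(es_\alpha-q_\alpha)$ and $\Var((Y-q_\alpha)1_{Y\le q_\alpha})$ — is exactly $\Sigma$ after passing to $(W_1,W_2)=(-\lim\mathbb G_n[1_{Y\le q_\alpha}],\,\alpha^{-1}\lim\mathbb G_n[(Y-q_\alpha)1_{Y\le q_\alpha}])$. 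The argmax-continuity theorem \citep{vdv2000asympstat}, in its joint form (argmin acting on the $\mathbb Z_n$-coordinate, legitimate because the limit has an a.s.\ unique, well-separated minimum; identity on the other coordinate), together with the continuous mapping theorem, then delivers $\big(a_n(\widehat q_{n,\alpha}-q_\alpha),\sqrt n(\widehat{es}_{n,\alpha}-es_\alpha)\big)\Rightarrow\big(\psi_\alpha^{\leftrightarrow}(W_1),W_2\big)$.

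The hard part is reconciling the two rates. One cannot feed the genuinely bivariate localized criterion $\Delta_n(s,h)=\sum_i[S(q_\alpha+s/a_n,es_\alpha+h/\sqrt n;X_i)-S(q_\alpha,es_\alpha;X_i)]$ straight into the argmax theorem, since no single normalization keeps both the $s$- and the $h$-parts of $\Delta_n$ non-degenerate: the $\sqrt n$-order contributions produced by the cross term $\alpha^{-1}G(x_2)(1_{y\le x_1}-\alpha)(x_1-y)$ and by the pure-$x_2$ part of $S$ are individually of order $\sqrt n$ and cancel only to leading order, leaving an $O_{\mathcal P}(1)$ limit that must be extracted uniformly on compacts. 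In the profiled route above this reappears as the requirement that inserting the (only $a_n$-consistent) quantile estimator into the $\sqrt n$-scaled expected-shortfall problem perturb it by merely $o_{\mathcal P}(1)$; this is precisely what the \emph{exact} relation $F_n(\widehat q_{n,\alpha})=\lceil n\alpha\rceil/n$ secures, as it makes $F(\widehat q_{n,\alpha})-\alpha$ of order $n^{-1/2}$ irrespective of the — possibly much slower or much faster — rate of $\widehat q_{n,\alpha}$ itself. I expect the remaining work — the uniform empirical-process approximations under only the weak moment Assumption~[B], and the stable-argmin property across all of Smirnov's cases in Proposition~\ref{prop:theassumptionA} — to be routine but lengthy.
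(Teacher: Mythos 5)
Your proposal is correct in substance, but it takes a genuinely different route from the paper. The paper stays inside the bivariate M-estimation framework: it establishes the $\sqrt n$-rate for $\widehat{es}_{n,\alpha}$ via a two-index rate theorem (Theorem \ref{thm:nuisance_rate_of_conv}), decomposes the localized criterion into the processes $V_n$ and $U_n$, proves a bespoke comparison lemma (Lemma \ref{lem:diff_between_minimizer}) to show that the actual minimizer $\sqrt n(\widehat{es}_{n,\alpha}-es_\alpha)$ differs from the minimizer of $U_n$ only by $o_{\mathcal P}(1)$ -- this is where the two rates are reconciled -- and then applies the argmax-continuity theorem to the decoupled process $Z_n$. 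You instead profile out: after invoking Proposition \ref{lem:connect_to_emp_vers} you work with the explicit plug-in $\alpha^{-1}\mathbb{E}_n[Y1_{Y\le \widehat q_{n,\alpha}}]$ and obtain its asymptotic linearity $\sqrt n(\widehat{es}_{n,\alpha}-es_\alpha)=\alpha^{-1}\mathbb{G}_n[(Y-q_\alpha)1_{Y\le q_\alpha}]+o_{\mathcal P}(1)$ directly, the key device being the expansion of the bias term around $F(\widehat q_{n,\alpha})$ rather than around $\widehat q_{n,\alpha}$, combined with the exact relation $F_n(\widehat q_{n,\alpha})=\lceil n\alpha\rceil/n$ (which the paper's own proof of Proposition \ref{lem:connect_to_emp_vers} also uses), so that $\sqrt n(F(\widehat q_{n,\alpha})-\alpha)=O_{\mathcal P}(1)$ and the remainder $\sqrt n\,|\widehat q_{n,\alpha}-q_\alpha|\,|F(\widehat q_{n,\alpha})-\alpha|$ is $o_{\mathcal P}(1)$ irrespective of $a_n$; the quantile marginal is then classical Smirnov--Knight, and the joint law follows from the bivariate CLT for $(\mathbb{G}_n[1_{Y\le q_\alpha}],\mathbb{G}_n[(Y-q_\alpha)1_{Y\le q_\alpha}])$ together with a convex-argmin (or joint argmax-continuity) argument, with the same covariance $\Sigma$ after your sign convention for $W_1$. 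What each buys: your route is more elementary and makes transparent why the ES stays $\sqrt n$-normal whatever the quantile rate, bypassing Theorem \ref{thm:nuisance_rate_of_conv} and Lemma \ref{lem:diff_between_minimizer} entirely; the paper's machinery, by contrast, applies to $\widetilde{es}_{n,\alpha}$ built from an arbitrary consistent $q_n$ (your linearization leans on the exact order-statistic identity, which is special to the empirical quantile) and is reused verbatim for the multivariate and spectral-risk results. Two details in your sketch need the paper's level of care: the drift convergence cannot be handled by Dini plus continuity of $\psi_\alpha$, since $\psi_\alpha$ is extended-real-valued and may be discontinuous (one argues via monotonicity, dominated convergence where $\psi_\alpha$ is finite, and the interval structure of $\{\psi_\alpha=\pm\infty\}$); and the limit criterion is not strictly convex at its minimum in Smirnov's fourth case ($\psi_\alpha\equiv 0$ on $[-c_1,c_2]$ with $c_1$ or $c_2>0$, minimum at an endpoint), though a.s. uniqueness of the argmin -- which is all the argument needs -- still holds. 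Neither issue is fatal.
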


\begin{remark*}
	The theorem implies that the marginal asymptotic distribution of the estimator $\widehat{es}_{n,\alpha}$ is not effected by low regularity of the distribution function $F$ at $q_\alpha$, although rate of convergence and asymptotic distribution of $\widehat{q}_{n,\alpha}$ become non-standard. This is not unsurprising, for the following reason. For a known value $q_\alpha$ of the $\alpha$-quantile, one could consider the oracle estimator 
	\[  \frac{1}{\alpha\,n}\sum_{i=1}^{n}1_{X_i\leq {q}_{\alpha}} X_i,\]
	which has asymptotic variance $\Var\big(1_{Y\leq q_{\alpha}}\, Y/\alpha\big)$. Now 
	\[ \Var\big(1_{Y\leq q_{\alpha}}(q_{\alpha}-Y)/\alpha\big) - \Var\big(1_{Y\leq q_{\alpha}}\, Y/\alpha\big) = \frac{1-\alpha}{\alpha}\, q_\alpha\, \big(q_\alpha - 2 es_\alpha \big).\]
	If $q_\alpha<0$, which is plausible in applications since we consider the lower-tail expected shortfall, then since $es_\alpha \leq q_\alpha$ the difference will be negative, so that estimating the quantile actually may reduce the asymptotic variance of the expected shortfall. This effect persists even it is quite hard -- as in situations with low regularity of $F$ at $q_\alpha$ -- to estimate the quantile. 
\end{remark*}

\begin{remark*}
	Chen and Tang (2005) proposed a smoothed estimator of the quantile and showed that higher-order correction of the MSE is possible for appropriate choice of the bandwidth. Scaillet (2004) proposed a smoothed estimator of the expected shortfall, but the asymptotic analysis in Chen (2008) showed that no asymptotic improvement can be expected, thus, Chen (2008) recommends the use of the simple empirical expected shortfall. 
	What is more, the favourable analysis of Chen and Tang (2005) for the smoothed estimator of the quantile depends on regularity of $F$ and $q_\alpha$, roughly a twice-continuously differentiable density.  We shall investigate behaviour of the smoothed estimator of the expected shortfall in our less regular situations in the simulation study. 
\end{remark*}

\begin{remark*}
	The proof of Theorem \ref{thm:main_thm_es} relies on the argmax-continuity theorem as presented e.g.~in \citet{vdv2000asympstat}, Corollary 5.58. However, this cannot be applied directly since the contrast process does not properly converge when normalized with a single rate, and the proof becomes quite involved. 
\end{remark*}

\begin{example}[{\sl Example \ref{ex:expansion_around_quantile} continued}]
	Consider the situation of Example \ref{ex:expansion_around_quantile},  and additionally assume that Assumption [B] is satisfied. 
	If $r=l$, Theorem \ref{thm:main_thm_es} applies with $a_n = n^{\nicefrac{1}{2(r+1)}}$ and 
	\begin{equation*}
	\psi^{\leftrightarrow}_{\alpha}(u) = \begin{cases}
	-\big(u/\kappa_-(q_{\alpha})\big)^{\nicefrac{1}{l+1}}, &\mbox{ if } u<0,\\
	0, &\mbox{ if } u=0,\\
	\big(u/\kappa_+(q_{\alpha})\big)^{\nicefrac{1}{r+1}}, &\mbox{ if } u>0.
	\end{cases}
	\end{equation*}
	For $r>l$ Theorem \ref{thm:main_thm_es} still applies with $a_n = n^{\nicefrac{1}{2(r+1)}}$, but in the formula for $\psi^{\leftrightarrow}_{\alpha}(u)$ as above, we have to replace the case $u<0$ with $\psi^{\leftrightarrow}_{\alpha}(u)=0$.
\end{example}
Next let us extend Theorem \ref{thm:main_thm_es} to a multivariate version. 
For given $k$ choose distinct $\alpha_m\in(0,1)$, $m\in \{1, \ldots , k\}$, and assume as before that $F$ is strictly monotone and continuous at each quantile $q_{\alpha_m}$.\\ 

\textbf{Assumption [A$^k$]:} For each $m\in\{1, \ldots, k\}$ and corresponding $\alpha_m$ and $q_{\alpha_m}$, Assumption [A] is satisfied with associated sequence $(a_{m,n})_n$ and function $\psi_{\alpha_m}(t)$.

\begin{theorem}\label{cor:conv_estimator_mult}
	Let [A$^k$] and [B] hold. 	
	Then as $n \to \infty$, 
	\begin{align*}
	\Big(a_{1,n} \big( \widehat{q}_{n,\alpha_1} - q_{\alpha_1}\big), \sqrt{n}\, \big(\widehat{es}_{n,\alpha_1} - es_{\alpha_1}\big),&\ldots , a_{k,n} \big( \widehat{q}_{n,\alpha_k} - q_{\alpha_k}\big), \sqrt{n}\, \big(\widehat{es}_{n,\alpha_k} - es_{\alpha_k}\big)\Big) \\[5pt]
	&\Rightarrow (z_{1,1}, z_{1,2}, \ldots , z_{k,1}, z_{k,2}),
	\end{align*}
	where for $m=1, \ldots, k$, 
	$		z_{m,1} = \psi^{\leftrightarrow}_{\alpha_m}( W{m,1}) $ and $ 
	z_{m,2} = W_{m,2}$,
	with $\psi^{\leftrightarrow}_{\alpha_m}$ as in (\ref{eq:formulainvfunct})	and the vector $\big(W_{1,1},W_{1,2}, \ldots , W_{k,1},W_{k,2}\big)$ distributed according to $\mathcal{N}\big(0,\Sigma\big)$ with $\Sigma$ determined by \begin{align*}
	\Cov \big(W_1^s,W_1^t\big) &= \alpha_s\wedge \alpha_t - \alpha_s \alpha_t,\\[5pt]
	\Cov \big(W_2^s, W_2^t) &= \frac{\alpha_s\wedge\alpha_t}{\alpha_s\alpha_t}\big(q_{\alpha_s}q_{\alpha_t} - (q_{\alpha_s}+q_{\alpha_t})es_{\alpha_s\wedge\alpha_t} \big) + \frac{1}{\alpha_s\alpha_t} \mathbb{E}\big[1_{Y\leq q_{\alpha_s\wedge \alpha_t}}Y^2\big],\\
	& \quad + \big(es_{\alpha_s} - q_{\alpha_s} \big) \big(es_{\alpha_t} - q_{\alpha_t} \big)\\[5pt]
	\Cov(W_1^s,W_2^t) &= \frac{\alpha_s\wedge\alpha_t}{\alpha_t}\big(q_{\alpha_t}-es_{\alpha_s\wedge \alpha_t}\big) - \alpha_s\big(q_{\alpha_t}-es_{\alpha_t}\big)
	\end{align*}
	for $s,t\in\{1,\ldots , k\}$. 	
\end{theorem}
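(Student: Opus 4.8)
The plan is to build on the single-level result Theorem~\ref{thm:main_thm_es}: for each level $\alpha_m$ I isolate the two scalar random quantities that asymptotically drive the quantile and expected-shortfall coordinates, observe that across the $k$ levels these are all functionals of one and the same empirical process, and then read off the joint limit from an ordinary multivariate central limit theorem together with the (extended) continuous mapping theorem.

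First, by Proposition~\ref{lem:connect_to_emp_vers} I may take $\widehat q_{n,\alpha_m}=X_{\lceil n\alpha_m\rceil:n}$ and replace each $\widehat{es}_{n,\alpha_m}$ by the empirical expected shortfall $\alpha_m^{-1}\mathbb{E}_n[Y\,1_{Y\le \widehat q_{n,\alpha_m}}]$, at the cost of an asymptotically negligible $O_{\mathcal{P}}(n^{-1})$ term. From the proof of Theorem~\ref{thm:main_thm_es} I would extract, for each fixed $m$, two ingredients. The first is that the expected-shortfall coordinate is asymptotically linear,
\[
\sqrt n\big(\widehat{es}_{n,\alpha_m}-es_{\alpha_m}\big)=\mathbb{G}_n\big[\alpha_m^{-1}\,1_{Y\le q_{\alpha_m}}\,(Y-q_{\alpha_m})\big]+o_{\mathcal{P}}(1),
\]
which is obtained by writing
\[
\sqrt n\big(\widehat{es}_{n,\alpha_m}-es_{\alpha_m}\big)=\alpha_m^{-1}\mathbb{G}_n\big[Y\,1_{Y\le \widehat q_{n,\alpha_m}}\big]+\alpha_m^{-1}\sqrt n\int_{q_{\alpha_m}}^{\widehat q_{n,\alpha_m}}y\,dF(y)
\]
and using stochastic equicontinuity of the empirical process over the Donsker classes $\{y\mapsto 1_{y\le x}\}$ and $\{y\mapsto y\,1_{y\le x}\}$ in a neighbourhood of $q_{\alpha_m}$ (square-integrability of the envelope of the latter being exactly Assumption~[B]), the bound $\big|\int_{q_{\alpha_m}}^{x}y\,dF(y)-q_{\alpha_m}(F(x)-\alpha_m)\big|\le|x-q_{\alpha_m}|\,|F(x)-\alpha_m|$, and $\sqrt n\big(F(\widehat q_{n,\alpha_m})-\alpha_m\big)=-\mathbb{G}_n[1_{Y\le q_{\alpha_m}}]+o_{\mathcal{P}}(1)$ (the latter following from $F_n(\widehat q_{n,\alpha_m})=\lceil n\alpha_m\rceil/n$ and equicontinuity). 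The second ingredient concerns the quantile coordinate: writing $G_{m,n}(t)=\sqrt n\big(F_n(q_{\alpha_m}+t/a_{m,n})-\alpha_m\big)$ one has $a_{m,n}(\widehat q_{n,\alpha_m}-q_{\alpha_m})=\inf\{t:G_{m,n}(t)\ge c_{m,n}\}$ with $c_{m,n}=\sqrt n(\lceil n\alpha_m\rceil/n-\alpha_m)\to 0$, and $G_{m,n}$ converges in $\ell^\infty$ on compact sets to $t\mapsto\psi_{\alpha_m}(t)-W_{m,1}$, where $W_{m,1}$ is the weak limit of $-\mathbb{G}_n[1_{Y\le q_{\alpha_m}}]$; this is because the empirical-process part of $G_{m,n}$ equals $\mathbb{G}_n[1_{Y\le q_{\alpha_m}}]+o_{\mathcal{P}}(1)$ uniformly on compacta (the indicator class is Donsker and the $L^2$-brackets vanish as the window shrinks) while its deterministic part tends to $\psi_{\alpha_m}(t)$ by Assumption~[A$^k$]. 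Hence, by the argmax/inversion argument of Theorem~\ref{thm:main_thm_es}, $a_{m,n}(\widehat q_{n,\alpha_m}-q_{\alpha_m})\to\inf\{t:\psi_{\alpha_m}(t)\ge W_{m,1}\}=\psi^{\leftrightarrow}_{\alpha_m}(W_{m,1})$, the last identity being immediate from~(\ref{eq:formulainvfunct}).

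The genuinely new step is joint convergence. I apply the multivariate (Cram\'er--Wold) central limit theorem to the $2k$-dimensional vector of sample averages with coordinates $-\mathbb{G}_n[1_{Y\le q_{\alpha_m}}]$ and $\mathbb{G}_n[\alpha_m^{-1}1_{Y\le q_{\alpha_m}}(Y-q_{\alpha_m})]$, $m=1,\ldots,k$, which converges to a centered Gaussian vector $(W_{1,1},W_{1,2},\ldots,W_{k,1},W_{k,2})$; the entries of its covariance matrix $\Sigma$ are the covariances of these functions of $Y$, which reduce to elementary moments via $F(q_{\alpha_m})=\alpha_m$, $q_{\min(\alpha_s,\alpha_t)}=\min(q_{\alpha_s},q_{\alpha_t})$ and $\mathbb{E}[Y\,1_{Y\le q_{\alpha_m}}]=\alpha_m es_{\alpha_m}$, and yield the stated formulas for $\Sigma$. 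Since the localized processes $G_{m,n}$ at the different levels and the linear parts above are all (limits of) functionals of the single empirical process, and since the degeneracy of each $G_{m,n}$ to the constant $-W_{m,1}$ (modulo the deterministic $\psi_{\alpha_m}$) makes the $m$-th quantile coordinate depend asymptotically only on $W_{m,1}$ through the fixed map $\psi^{\leftrightarrow}_{\alpha_m}$ --- which is continuous at $W_{m,1}$ almost surely, its only possible discontinuity being at $0$, a null set for a non-degenerate Gaussian --- the extended continuous mapping theorem promotes the coordinatewise statements above to the asserted joint weak convergence, and Slutsky's lemma absorbs the $o_{\mathcal{P}}(1)$ and $O_{\mathcal{P}}(n^{-1})$ remainders.

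I expect the main obstacle to be exactly the one already present in Theorem~\ref{thm:main_thm_es}, now in $k$-fold form: the stacked contrast problem carries $k$ possibly non-standard rates $a_{1,n},\ldots,a_{k,n}$ on the quantile coordinates together with the common rate $\sqrt n$ on the expected-shortfall coordinates, so it does not converge under any single normalization and no off-the-shelf argmax-continuity theorem applies to the whole vector at once. The work is to make rigorous that the inversion functional governing the $m$-th quantile localizes, i.e.\ depends, up to $o_{\mathcal{P}}(1)$, only on the empirical process restricted to a shrinking neighbourhood of $q_{\alpha_m}$, which collapses to the single scalar $W_{m,1}$; once this is established, all cross-level dependence is carried by the joint Gaussian law of $(W_{m,1},W_{m,2})_{m=1}^{k}$, and the remaining joint CLT and continuous-mapping steps are routine.
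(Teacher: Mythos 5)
Your argument is correct in substance, but it follows a genuinely different route from the paper. The paper proves the multivariate statement by recycling its M-estimation machinery: it stacks, for each level $\alpha_m$, the contrast processes $V_n^m$ and $U_n^m$ built from the bivariate scoring function, shows that the single-level expansions and the approximation lemma (replacing $\sqrt{n}(\widehat{es}_{n,\alpha_m}-es_{\alpha_m})$ by the explicit minimizer $u_{2,n}^m$ of $U_n^m$, which rests on the rate result of Lemma \ref{cor:prep_es_tight} and the two-process Lemma \ref{lem:diff_between_minimizer}) hold for every $m$, computes the joint Gaussian covariance of the linear terms, and then applies the argmax-continuity theorem to the summed, variable-separated process $Z_{n,mult}$. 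You instead bypass the scoring-function contrast entirely at this stage: via Proposition \ref{lem:connect_to_emp_vers} you reduce to the empirical quantile and the plug-in empirical expected shortfall, linearize the latter directly, $\sqrt{n}(\widehat{es}_{n,\alpha_m}-es_{\alpha_m})=\mathbb{G}_n\big[\alpha_m^{-1}1_{Y\le q_{\alpha_m}}(Y-q_{\alpha_m})\big]+o_{\mathcal{P}}(1)$, using Donsker/equicontinuity arguments plus the bound $\big|\int_{q_{\alpha_m}}^{x}y\,\dif F(y)-q_{\alpha_m}(F(x)-\alpha_m)\big|\le|x-q_{\alpha_m}|\,|F(x)-\alpha_m|$ (which, pleasantly, needs only consistency of $\widehat q_{n,\alpha_m}$ and tightness of $\sqrt{n}(F(\widehat q_{n,\alpha_m})-\alpha_m)$, not a rate for the quantile), treat the quantile coordinates by the classical Smirnov-type inversion of the localized empirical distribution function, and then obtain the joint law from a $2k$-dimensional CLT plus continuous mapping (with $\psi^{\leftrightarrow}_{\alpha_m}$ a.s.\ continuous at $W_{m,1}$) and Slutsky; your influence functions are the negatives of the paper's, which leaves the covariance matrix unchanged, and your reductions reproduce the stated $\Sigma$. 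What your route buys is that the delicate two-rate argmax problem, which is the paper's main technical burden, disappears, because you never minimize a joint objective; what the paper's route buys is generality and economy, since it works at the level of the M-estimation problem (so it applies verbatim to the scoring-function estimators and reuses the single-level lemmas without new empirical-process work), whereas your argument leans on the explicit closed form of the estimator from Proposition \ref{lem:connect_to_emp_vers}. Two small points you should tighten in a write-up: the inf-inversion functional is not continuous on $\ell^\infty(K)$, so the ``extended continuous mapping'' step is best implemented through the event identity $\{a_{m,n}(\widehat q_{n,\alpha_m}-q_{\alpha_m})\le t\}=\{F_n(q_{\alpha_m}+t/a_{m,n})\ge \lceil n\alpha_m\rceil/n\}$ and convergence of the corresponding joint distribution functions (or by the argmax route as in the paper); and the identity $F_n(\widehat q_{n,\alpha_m})=\lceil n\alpha_m\rceil/n$ tacitly ignores ties, though this is the same convention the paper itself uses in the proof of Proposition \ref{lem:connect_to_emp_vers}.
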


The extension of the proof of Theorem \ref{thm:main_thm_es} to the multivariate case in Theorem \ref{cor:conv_estimator_mult} is relegated to the technical supplement.  


As an application of the above theorem, consider estimation of spectral risk measures with finite support. 
For a probability measure $\mu$ on $[0,1]$, called the \emph{spectral measure}, define
\begin{equation*}
\nu_{\mu}(F) = \int_{[0,1]} es_{\alpha} \,\dif \mu(\alpha)
\end{equation*}
as the \emph{spectral risk measure} associated to $\mu$. Here, the boundary cases are given by $es_1 = E [Y]$ and $es_0 =$ essinf$\, Y$. 
If $\mu$ is finitely supported in $(0,1)$, $\nu_{\mu}(F)$ is a finite convex combination of expected shortfalls for different levels,
\begin{equation*}
\nu_{\mu} = \sum_{m=1}^{k}p_m \, es_{\alpha_m} \quad \text{if} \quad \mu = \sum_{m=1}^{k} p_m \delta_{\alpha_m}.
\end{equation*}
\citet{fisszieg2015elicitability} show that strictly consistent scoring functions for $\nu_{\mu}$ in this case are given by
\begin{align*}
S_{sp}(x_1, \ldots , x_k, x_{k+1};z) &= \sum_{m=1}^{k} \Bigg(\Big(1+\frac{p_m}{\alpha_m}G(x_{k+1})\Big)\big(1_{z\leq x}-\alpha\big)\,(x-z) \\
&\qquad\qquad + p_m\big(G(x_{k+1})(x_{k+1}-z) - \mathcal{G}(x_{k+1}\big)\big)\Bigg). 
\end{align*}
where the functions $G$ and $\mathcal{G}$ are as above. 
If we define the corresponding M-estimator
\[
\big(\widehat{q}_{n,\alpha_1}, \ldots , \widehat{q}_{n,\alpha_k}, \widehat{\nu}_{\mu,n}\big) \in \argmin_{x_1, \ldots , x_{k+1}}\, \sum_{i=1}^{n} S_{sp}(x_1, \ldots , x_{k+1}; Y_i),
\]
then we have the following result. 
\begin{theorem}\label{th:estspectralrisk}
	We have that 
	\begin{equation}\label{eq:specrisk}
	\widehat{\nu}_{\mu,n} = \sum_{m=1}^{k}p_m\, \widehat{es}_{n,\alpha_m}.
	\end{equation}
	Consequently, under Assumptions [A$^k$] and [B] it follows that
	\[ \sqrt{n}\, \big(\widehat{\nu}_{\mu,n} - \widehat{\nu}_{\mu} \big) \Rightarrow \sum_{m=1}^k p_m \, W_{m,2},\] 
	where the $W_{m,2}$ are as in Theorem \ref{cor:conv_estimator_mult}. 
\end{theorem}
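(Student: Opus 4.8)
The plan is to reduce Theorem~\ref{th:estspectralrisk} to Theorem~\ref{cor:conv_estimator_mult} by first establishing the explicit representation \eqref{eq:specrisk} and then applying the continuous mapping theorem. The first and main task is to show that any minimizer of $\sum_{i=1}^n S_{sp}(x_1,\ldots,x_{k+1};Y_i)$ has a quantile-component block $(\widehat q_{n,\alpha_1},\ldots,\widehat q_{n,\alpha_k})$ that can be chosen equal to the empirical quantiles, and a last component $\widehat\nu_{\mu,n}$ equal to $\sum_{m=1}^k p_m\,\widehat{es}_{n,\alpha_m}$. To do this I would inspect the structure of $S_{sp}$: apart from the terms involving $G(x_{k+1})$, the objective decouples into a sum over $m$ of the standard pinball losses $(1_{z\le x_m}-\alpha_m)(x_m-z)$, so minimizing in $x_m$ alone again yields the empirical $\alpha_m$-quantile; and, crucially, the coupling term $\frac{p_m}{\alpha_m}G(x_{k+1})(1_{z\le x_m}-\alpha_m)(x_m-z)$ is, for fixed $x_{k+1}$, a positive multiple of the same pinball loss, so the minimizing $x_m$ is unaffected. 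Hence one may first fix $x_m=\widehat q_{n,\alpha_m}$ for each $m$ and then minimize the remaining function of $x_{k+1}$ only.

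Next I would carry out the one-dimensional minimization over $x_{k+1}$. After substituting $x_m=\widehat q_{n,\alpha_m}$, the part of $\frac1n\sum_i S_{sp}$ depending on $x_{k+1}$ is
\[
G(x_{k+1})\Big(x_{k+1}\sum_{m=1}^k p_m + \sum_{m=1}^k \tfrac{p_m}{\alpha_m}\,\mathbb{E}_n\big[(1_{Y\le \widehat q_{n,\alpha_m}}-\alpha_m)(\widehat q_{n,\alpha_m}-Y)\big] - \mathbb{E}_n[Y]\sum_{m=1}^k p_m\Big) - \Big(\sum_{m=1}^k p_m\Big)\mathcal G(x_{k+1}),
\]
which has exactly the form $G(x_{k+1})(x_{k+1}-c_n) - \mathcal G(x_{k+1})$ up to the positive factor $\sum_m p_m$, with $c_n$ a sample-dependent constant. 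Since $\mathcal G''=G'>0$, differentiating in $x_{k+1}$ gives $G'(x_{k+1})(x_{k+1}-c_n)=0$, whose unique solution (using $G'>0$) is $x_{k+1}=c_n$. It then remains to simplify $c_n$: using the explicit formula \eqref{eq:esscoringexpl} for $\widehat{es}_{n,\alpha_m}$ from Proposition~\ref{lem:connect_to_emp_vers} — which says precisely that $\widehat{es}_{n,\alpha_m}=\alpha_m^{-1}\mathbb{E}_n(Y1_{Y\le\widehat q_{n,\alpha_m}})+\widehat q_{n,\alpha_m}(1-(\alpha_m n)^{-1}\sum_i 1_{X_i\le\widehat q_{n,\alpha_m}})$ — one checks that $c_n=\big(\sum_m p_m\big)^{-1}\sum_m p_m\,\widehat{es}_{n,\alpha_m}$, giving \eqref{eq:specrisk} after noting the $\sum_m p_m$ factors cancel. (If $\mu$ is a probability measure then $\sum_m p_m=1$ and this is immediate; otherwise the argument is identical with the harmless overall constant.)

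Finally, the asymptotic statement is immediate: by Theorem~\ref{cor:conv_estimator_mult}, under Assumptions [A$^k$] and [B] the vector $\big(\sqrt n(\widehat{es}_{n,\alpha_1}-es_{\alpha_1}),\ldots,\sqrt n(\widehat{es}_{n,\alpha_k}-es_{\alpha_k})\big)$ converges in distribution to $(W_{1,2},\ldots,W_{k,2})$, a component of the joint Gaussian limit there. Applying the continuous linear map $(w_1,\ldots,w_k)\mapsto\sum_{m=1}^k p_m w_m$ and the continuous mapping theorem to $\sqrt n(\widehat\nu_{\mu,n}-\nu_\mu)=\sum_{m=1}^k p_m\sqrt n(\widehat{es}_{n,\alpha_m}-es_{\alpha_m})$ yields the claimed limit $\sum_{m=1}^k p_m W_{m,2}$.

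I expect the main obstacle to be the bookkeeping in the first two steps: carefully justifying that the coupling terms in $S_{sp}$ do not disturb the quantile minimizers (so that the block-minimization is legitimate, including measurability/existence of a jointly measurable minimizer), and matching the resulting constant $c_n$ with $\sum_m p_m\,\widehat{es}_{n,\alpha_m}$ via the identity in Proposition~\ref{lem:connect_to_emp_vers}. The convergence part is then routine.
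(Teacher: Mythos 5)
Your proposal is correct and follows essentially the same route as the paper's own proof: separate the minimization so that each $x_m$ is the empirical $\alpha_m$-quantile (legitimate since $1+\tfrac{p_m}{\alpha_m}G(x_{k+1})>0$), solve the first-order condition in $x_{k+1}$ using $G'>0$, identify the solution with $\sum_m p_m\,\widehat{es}_{n,\alpha_m}$ via the formula in Proposition~\ref{lem:connect_to_emp_vers}, and then conclude by Theorem~\ref{cor:conv_estimator_mult} and the continuous mapping theorem. Since $\mu$ is a probability measure, $\sum_m p_m=1$, so your parenthetical about a general normalizing constant is moot.
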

The first part of the theorem is proved similarly to Proposition \ref{lem:connect_to_emp_vers}, details are given in the technical supplement. The second part then follows directly from formula (\ref{eq:specrisk}) and Theorem \ref{cor:conv_estimator_mult}. 


\section{Simulations}\label{sec:sims}

\subsection{Distribution function with kink in the $\alpha$-quantile}\label{subsec:jump_disc}

We let 
$F$ be given by 
\begin{equation*}
F(x) = \frac{1}{5}(x+1)\, 1_{(-1,0]}(x) + \big(\frac{1}{5}+\frac{8}{5}\,x \big)\, 1_{(0,\nicefrac{1}{2}]}(x) + 1_{(\nicefrac{1}{2}, \infty)}(x).
\end{equation*}
Then $q_{\nicefrac{1}{5}}=0$, and the expected shortfall at level $1/5$ is $es_{\nicefrac{1}{5}} = -\nicefrac{1}{2}$. Further, $F'(0-)=\nicefrac{1}{5}$ and $F'(0+)=\nicefrac{8}{5}$, where $F'(0\pm)$ denote the left- and right-sided derivatives in $0$. 

Taylor expansion shows that Example~\ref{ex:expansion_around_quantile} applies with $r=l=0$, $a_n=\sqrt{n}$ and \begin{equation*}
\psi_{\nicefrac{1}{5}}(t) =	t\, \big( F'(0-)\,1_{t\leq 0} +F'(0+)\,1_{t\geq 0}\big)
\end{equation*}
so that \begin{equation*}
\psi_{\nicefrac{1}{5}}^{\leftrightarrow}(t)= t\,\big(1_{t\leq 0}/F'(0-) + 1_{t\geq 0}/F'(0+)\big).
\end{equation*}
It follows that 
\begin{equation*}
\sqrt{n}\, \big(\widehat{q}_{n,\nicefrac{1}{5}}-0,\,\widehat{es}_{n,\nicefrac{1}{5}}+\nicefrac{1}{2} \big)\Rightarrow \Big(W_1 \big(1_{W_1 < 0}/F'(0-) + 1_{W_1 > 0}/F'(0+)\big) , W_2\Big),
\end{equation*}
where \begin{equation*}
(W_1,W_2)  \sim \mathcal{N}\left(0,\begin{pmatrix} \nicefrac{4}{25} & \nicefrac{2}{5} \\  \nicefrac{2}{5}  & \nicefrac{5}{3}
\end{pmatrix} \right).
\end{equation*}
The limit distribution function of $\sqrt{n}\,\widehat{q}_{n,\nicefrac{1}{5}}$ is calculated as \begin{equation*}
z\mapsto\Phi_{0,\nicefrac{4}{25}}\big(z\,(1_{z< 0}\,F'(0-) + 1_{z> 0}\,F'(0+))\big).
\end{equation*}
We compute the estimators for simulated samples of sizes $n \in \{10^2,10^3 ,10^4, 5\, 10^4, 10^5, 10^6\}$, each for $m=5\cdot 10^3$ iterations, using the using \textbf{R} programming language.  
Figure \ref{fig:root_order_two_cdf_1} shows estimated and asymptotic distribution functions of  
\[n^{\nicefrac{1}{2}}\,(\widehat{q}_{n,\nicefrac{1}{5}}) \quad \text{and} \quad n^{\nicefrac{1}{2}}\,(\widehat{es}_{n,\nicefrac{1}{5}}+\nicefrac{1}{2}),\]
for samples of sizes $n \in \{n^2, n^3, n^4\}$. The approximation is reasonable in both cases also for small sample sizes. 

\begin{figure}
	\centering
	\begin{subfigure}{.5\textwidth}
		\centering
		\includegraphics[width=\linewidth]{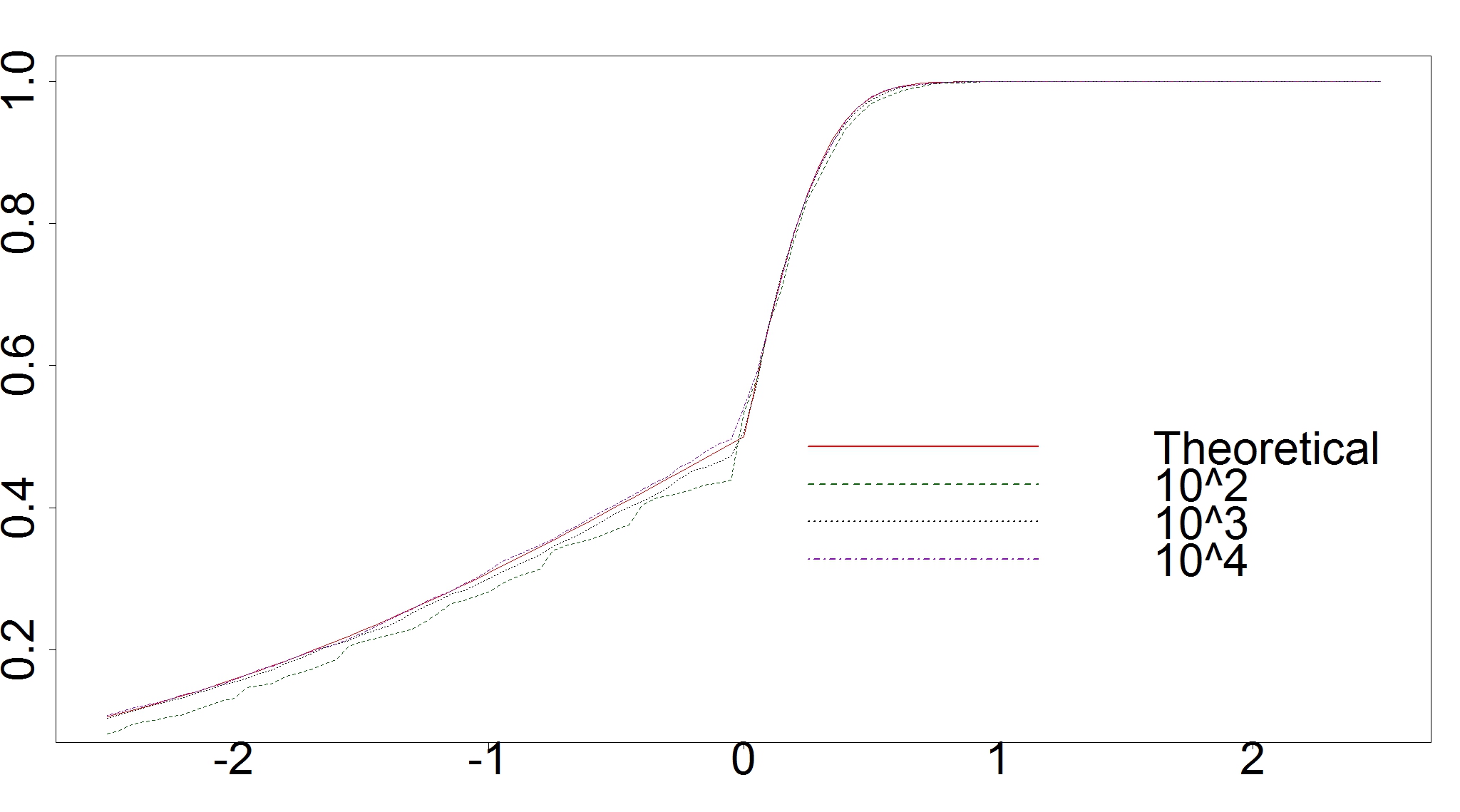}
		\caption{Quantile estimator}
		\label{subfig:quantile_sprung_1}
	\end{subfigure}%
	\begin{subfigure}{.5\textwidth}
		\centering
		\includegraphics[width=\linewidth]{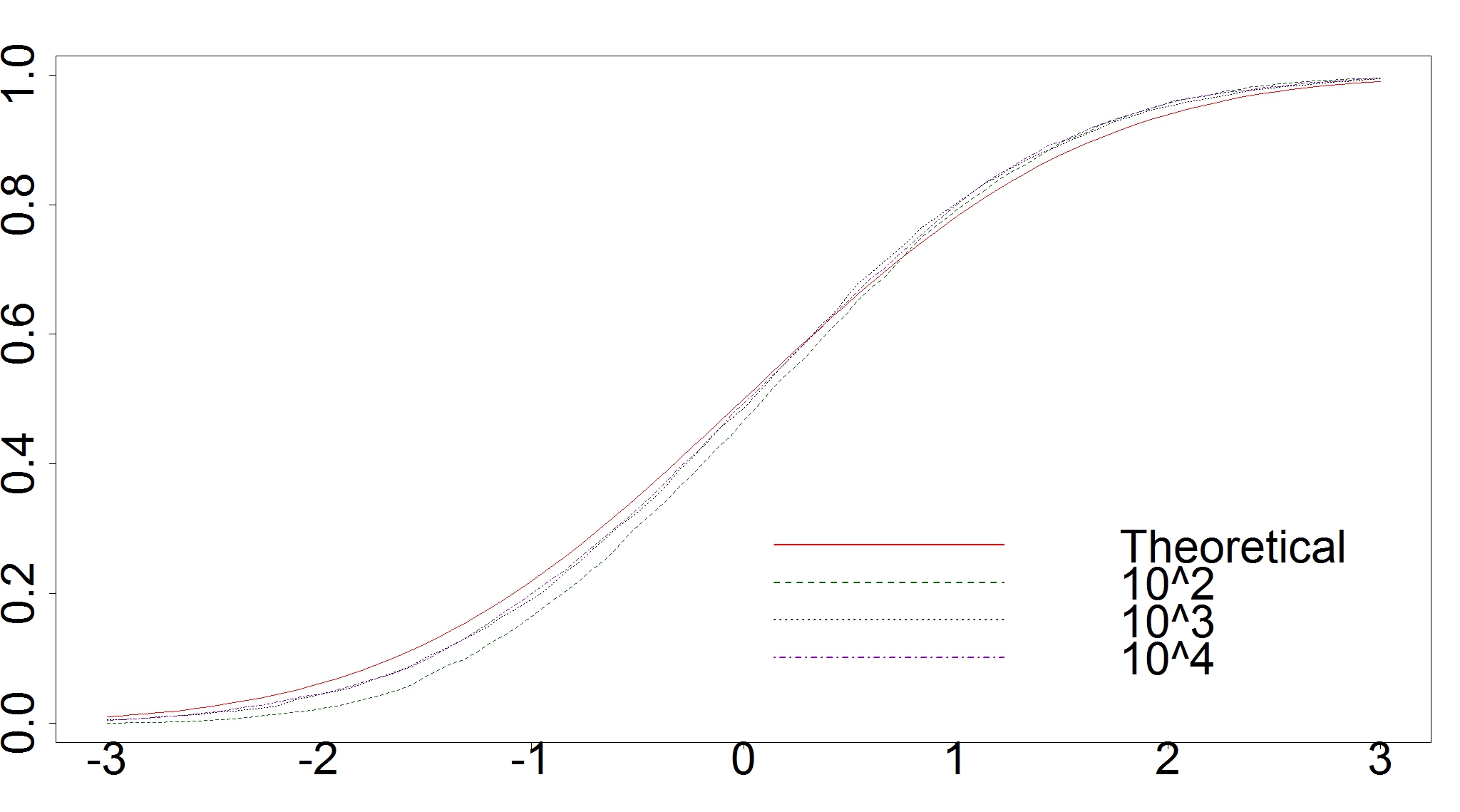}
		\caption{Expected Shortfall estimator}
		\label{subfig:expectedshort_sprung_1}
	\end{subfigure}
	\captionsetup{font={small}, labelfont={bf}, width=.95\linewidth}
	\caption[ Example~\ref{subsec:jump_disc}; Asymptotics of $\big(\widehat{q}_{n,\nicefrac{1}{5}},\widehat{es}_{n,\nicefrac{1}{5}}\big)$ for $m=5\cdot 10^3$]{The left picture (a) shows the limit (red solid) and the estimated distribution function of $n^{\nicefrac{1}{2}}\big(\widehat{q}_{n,\nicefrac{1}{5}}\big)$ for the distribution function of Example~\ref{subsec:jump_disc} for $n=10^2$ (green dashed), $n=10^3$ (black dotted) and $n=10^4$ (purple dot-dashed). \\ The right picture (b) accordingly shows the estimated and the limit (solid red) distribution function of $\sqrt{n}\big(\widehat{es}_{n,\nicefrac{1}{5}}+\nicefrac{1}{2}\big)$ for $n=10^2$ (green dashed), $n=10^3$ (black dotted) and $n=10^4$ (purple dot-dashed).\\ Here $m=5\cdot 10^3$ was chosen for both estimations.}
	\label{fig:jump_1}
\end{figure}

From the same data we in addition computed the smoothed quantile estimator $\widetilde{q}_{h,n,\nicefrac{1}{5}}$ and the estimator $\widetilde{es}_{h,n,\nicefrac{1}{5}}$ for the expected shortfall as proposed in \cite{chenTang2005} and \citet{chen2008}, respectively. Here we used fixed bandwidths $h_n$ chosen as the median normal reference bandwidth of additional training samples. 

Table~\ref{tab:bias_var_4.1} shows the mean and the standard deviation of the centered and rescaled estimators for the quantile and the expected shortfall, as well as their correlation. 
We observe that the limit distribution of $\sqrt{n}\, \widehat{q}_{n,\nicefrac{1}{5}}$ does not have mean $0$, while the mean of $\sqrt{n}\, \widetilde{q}_{n,\nicefrac{1}{5}}$ seems to diverge. Smoothing the expected shortfall also seems to introduce a small bias, without substantially reducing the standard deviation.

\renewcommand{\arraystretch}{1.1}

\begin{table}[htb]
	\centering
	\captionsetup{font={small}, labelfont={bf}, width=0.9\linewidth}
	\caption[Example~\ref{subsec:jump_disc}:  Sample bias and standard deviation]{Sample bias and standard deviation of the rescaled and centred estimators calculated over $5000$ estimates with $h\in\{0.082, 0.052, 0.0333, 0.024, 0.0201, 0.0132\}$ }\label{tab:bias_var_4.1}
	\begin{tabular}{|E{2cm}|E{3.2cm}|*6{Z{30pt}|}}
		\hhline{|========|}
		Samplesize & $n$ & $10^2$ & $10^3$ & $10^4$ & $5\cdot 10^4$ & $10^5$ & $10^6$ \tabularnewline 
		\hhline{|========|} 
		Mean & $\sqrt{n}\,\widehat{q}_{n,\nicefrac{1}{5}}$& $ -0.59 $ & $ -0.67 $ & $ -0.71 $ & $ -0.69 $ & $ -0.7 $ & $ -0.75 $ \tabularnewline 
		& $\sqrt{n}\,\widetilde{q}_{h,n,\nicefrac{1}{5}}$& $ -0.91 $ & $ -1.5 $ & $ -2.81 $ & $ -4.43 $ & $ -5.23 $ & $ -10.81 $
		\tabularnewline 
		\hhline{|~-------|}
		&  $\sqrt{n}\big(\widehat{es}_{n,\nicefrac{1}{5}}+\nicefrac{1}{2}\big)$& $ 0.11 $ & $ 0.02 $ & $ 0.01 $ & $ 0 $ & $ 0 $ & $ 0 $    \tabularnewline  
		& $\sqrt{n}\big(\widetilde{es}_{h,n,\nicefrac{1}{5}}+\nicefrac{1}{2}\big)$& $ 0.19 $ & $ 0.12 $ & $ 0.15 $ & $ 0.16 $ & $ 0.16 $ & $ 0.21 $   \tabularnewline 
		\hhline{|========|}
		Standard & $\sqrt{n}\,\widehat{q}_{n,\nicefrac{1}{5}}$& $ 1.12 $ & $ 1.22 $ & $ 1.24 $ & $ 1.23 $ & $ 1.21 $ & $ 1.22 $ \tabularnewline
		deviation& $\sqrt{n}\,\widetilde{q}_{h,n,\nicefrac{1}{5}}$& $ 0.95 $ & $ 0.91 $ & $ 0.84 $ & $ 0.81 $ & $ 0.8 $ & $ 0.81 $ \tabularnewline
		\hhline{|~-------|}
		& $\sqrt{n}\big(\widehat{es}_{n,\nicefrac{1}{5}}+\nicefrac{1}{2}\big)$ & $ 1.1 $ & $ 1.18 $ & $ 1.18 $ & $ 1.19 $ & $ 1.19 $ & $ 1.19 $ \tabularnewline
		&$\sqrt{n}\big(\widetilde{es}_{h,n,\nicefrac{1}{5}}+\nicefrac{1}{2}\big)$ & $ 1.12 $ & $ 1.2 $ & $ 1.19 $ & $ 1.2 $ & $ 1.19 $ & $ 1.19 $\tabularnewline
		\hhline{|========|}
		Correlation & $\sqrt{n}\,\widehat{q}_{n,\nicefrac{1}{5}}$ and $\sqrt{n}\big(\widehat{es}_{n,\nicefrac{1}{5}}+\nicefrac{1}{2}\big)$ & $ 0.76 $ & $ 0.76 $ & $ 0.76 $ & $ 0.76 $ & $ 0.76 $ & $ 0.76 $
		\tabularnewline
		\hhline{|========|}
	\end{tabular} 
\end{table}
\renewcommand{\arraystretch}{1}

\subsection{Density with root of order $2$}\label{subsec:root_order_two}

Let $\alpha = \nicefrac{1}{2}$ and 
\[ 
F(x) = 1_{[0,2]}(x)\, 
\frac{((x-1)^3+1)}{2} + 1_{(2,\infty)}(x).
\]
Then $F(1)=\nicefrac{1}{2}$, so that $q_{\nicefrac{1}{2}}=1$ and $es_{\nicefrac{1}{2}}=\nicefrac{1}{4}$. Example~\ref{ex:expansion_around_quantile} applies with $r=l=2$, $\varepsilon=1$ and $\kappa_+(x)=-\kappa_-(x)=1/2$, hence $a_n=n^{\nicefrac{1}{6}}$ and $\psi_{\nicefrac{1}{2}}(t)=t^3/2$ satisfy Assumption [A]. The map $\psi_{\nicefrac{1}{2}}$ is invertible with $\psi_{\nicefrac{1}{2}}^{\leftrightarrow}(y) = \psi_{\nicefrac{1}{2}}^{-1}(y)=(2y)^{\nicefrac{1}{3}}$. Assumption [B] is fulfilled as well with 
$4\,\Var\big(1_{Y\leq 1}(1-Y)\big) =\nicefrac{51}{80}
$. 
Using Theorem~\ref{thm:main_thm_es} we obtain 
\begin{equation*}
\big(n^{\nicefrac{1}{6}}\,(\widehat{q}_{n,\nicefrac{1}{2}}-1), n^{\nicefrac{1}{2}}\,(\widehat{es}_{n,\nicefrac{1}{2}}-\nicefrac{1}{4})\big) \Rightarrow \big(\psi_{\nicefrac{1}{2}}^{-1}(W_1),\,W_2\big) = \big((2\,W_1)^{\nicefrac{1}{3}},\,W_2\big),
\end{equation*}
where 

\begin{equation*}
(W_1,W_2)\sim\mathcal{N}(0,\Sigma), \qquad \Sigma = \begin{pmatrix}
\nicefrac{1}{4} && \nicefrac{3}{8}\\
\nicefrac{3}{8} && \nicefrac{51}{80}
\end{pmatrix}.
\end{equation*}
The distribution function of $\psi_{\nicefrac{1}{2}}^{-1}(W_1)$ is 
$ \Phi_{0,\nicefrac{1}{4}}\big(x^3/2\big),
$
and the joint density of $\big(\psi_{\nicefrac{1}{2}}^{-1}(W_1),\,W_2\big)$ is given by

\begin{align*}
f_{\psi_{\nicefrac{1}{2}}^{-1}(W_1),W_2}(x,y) = \frac{3t^2}{4\,\pi\sqrt{\det \Sigma}} \exp\Big(-\frac{1}{8} (t^3~v)\Sigma^{-1} (t^3~v)^T\Big) \,. 
\end{align*}
We compute the estimators for simulated samples of sizes $n \in \{10^2,10^3 ,10^4, 5\, 10^4, 10^5, 10^6\}$, each for $m=5\cdot 10^3$ iterations. 

Figure \ref{fig:root_order_two_cdf_1} shows estimated and asymptotic distribution functions of  
\[n^{\nicefrac{1}{6}}\,(\widehat{q}_{n,\nicefrac{1}{2}}-1) \quad \text{and} \quad n^{\nicefrac{1}{2}}\,(\widehat{es}_{n,\nicefrac{1}{2}}-\nicefrac{1}{4}),\]
where Figure~\ref{fig:root_order_two_cdf_1} contains the results for $n \in \{10^2,10^3 ,10^4\}$ for the quantile estimator as well as $n \in \{10^2,10^3 ,10^4, 10^6\}$ for the expected shortfall estimator, and  Table~\ref{tab:bias_var_3.1} shows the means and the standard deviations as well as the correlations of the centred and rescaled estimators. \\

\renewcommand{\arraystretch}{1.1}

\begin{table}[htb]
	\centering
	\captionsetup{font={small}, labelfont={bf}, width=.9\linewidth}
	\caption[Example~\ref{subsec:root_order_two}: Sample bias and standard deviation]{Sample bias, standard deviation and correlation of the centred and rescaled estimators calculated over $5000$ estimates}\label{tab:bias_var_3.1}
	\begin{tabular}{|E{2cm}|E{3.2cm}|*6{Z{30pt}|}}
		\hhline{|========|}
		Samplesize & $n$ & $10^2$ & $10^3$ & $10^4$ & $5\cdot 10^4$ & $10^5$ & $10^6$ \tabularnewline 
		\hhline{|========|} 
		Mean & $\sqrt{n}\,\big(\widehat{q}_{n,\nicefrac{1}{2}}-1\big)$& $ -0.01 $ & $ -0.01 $ & $ 0.02 $ & $ -0.03 $ & $ 0 $ & $ 0.01 $ \tabularnewline 
		\hhline{|~-------|}
		&  $\sqrt{n}\big(\widehat{es}_{n,\nicefrac{1}{2}}-\nicefrac{1}{4}\big)$& $ 0.28 $ & $ 0.2 $ & $ 0.14 $ & $ 0.09 $ & $ 0.1 $ & $ 0.07 $    \tabularnewline  
		\hhline{|========|}
		Standard & $\sqrt{n}\,\big(\widehat{q}_{n,\nicefrac{1}{2}}-1\big)$& $ 0.88 $ & $ 0.89 $ & $ 0.89 $ & $ 0.9 $ & $ 0.9 $ & $ 0.9 $ \tabularnewline
		\hhline{|~-------|}
		deviation& $\sqrt{n}\big(\widehat{es}_{n,\nicefrac{1}{2}}-\nicefrac{1}{4}\big)$ & $ 0.83 $ & $ 0.82 $ & $ 0.79 $ & $ 0.81 $ & $ 0.8 $ & $ 0.79 $  \tabularnewline
		\hhline{|========|}
		Correlation & $\sqrt{n}\,\big(\widehat{q}_{n,\nicefrac{1}{2}}-1\big)$ and $\sqrt{n}\big(\widehat{es}_{n,\nicefrac{1}{2}}-\nicefrac{1}{4}\big)$ & $ 0.83 $ & $ 0.85 $ & $ 0.86 $ & $ 0.87 $ & $ 0.87 $ & $ 0.87 $ 
		\tabularnewline
		\hhline{|========|}
	\end{tabular} 
	
\end{table}

\renewcommand{\arraystretch}{1}

Overall, the asymptotic approximation is reasonable for the quantile already for moderate sample sizes, but the expected shortfall requires quite large sample sizes for the asymptotic approximation to become valid. 

\begin{figure}
	\centering
	\begin{subfigure}{.5\textwidth}
		\centering
		\includegraphics[width=\linewidth]{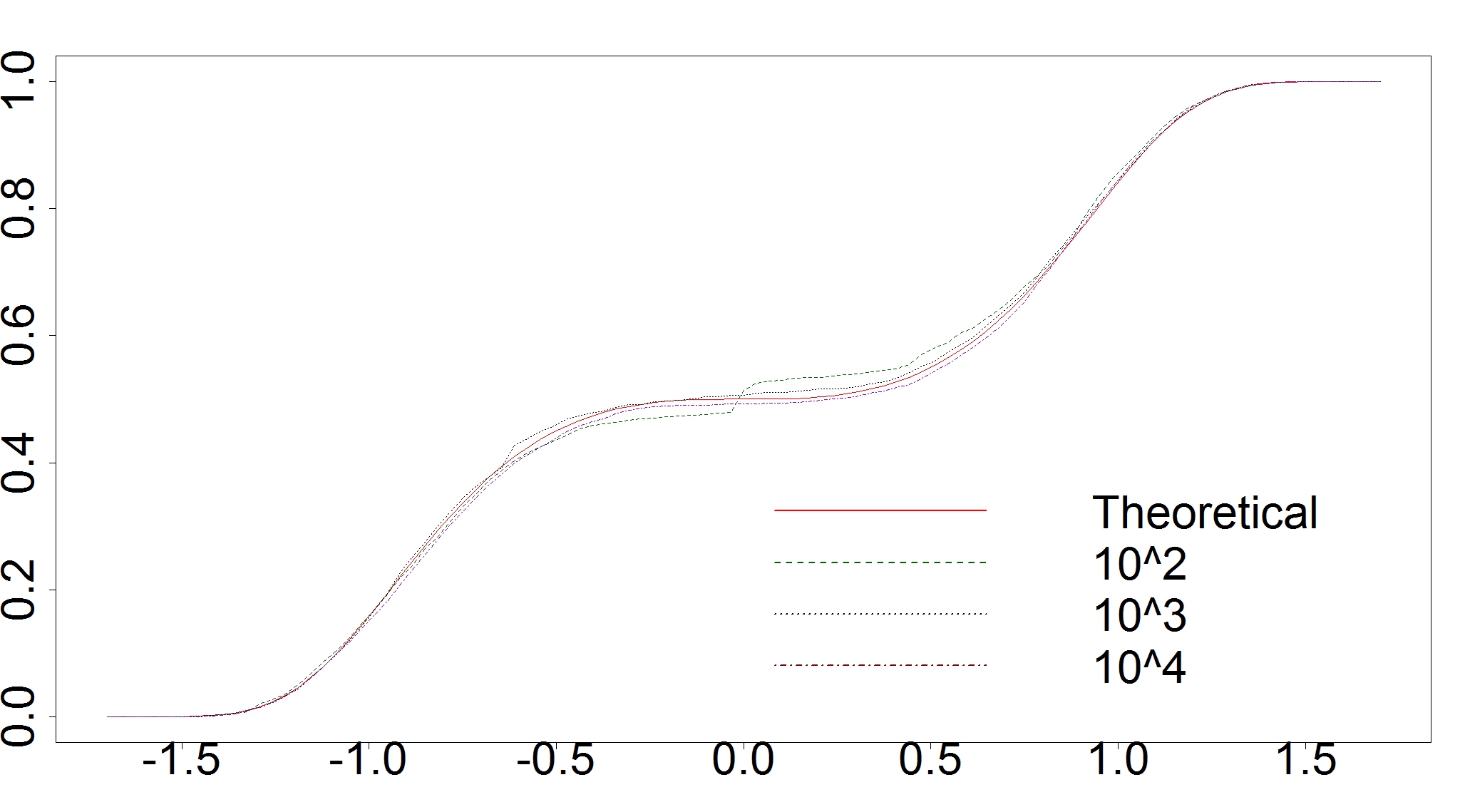}
		\caption{Quantile estimator}
		\label{subfig:quantile_ns_1}
	\end{subfigure}%
	\begin{subfigure}{.5\textwidth}
		\centering
		\includegraphics[width=\linewidth]{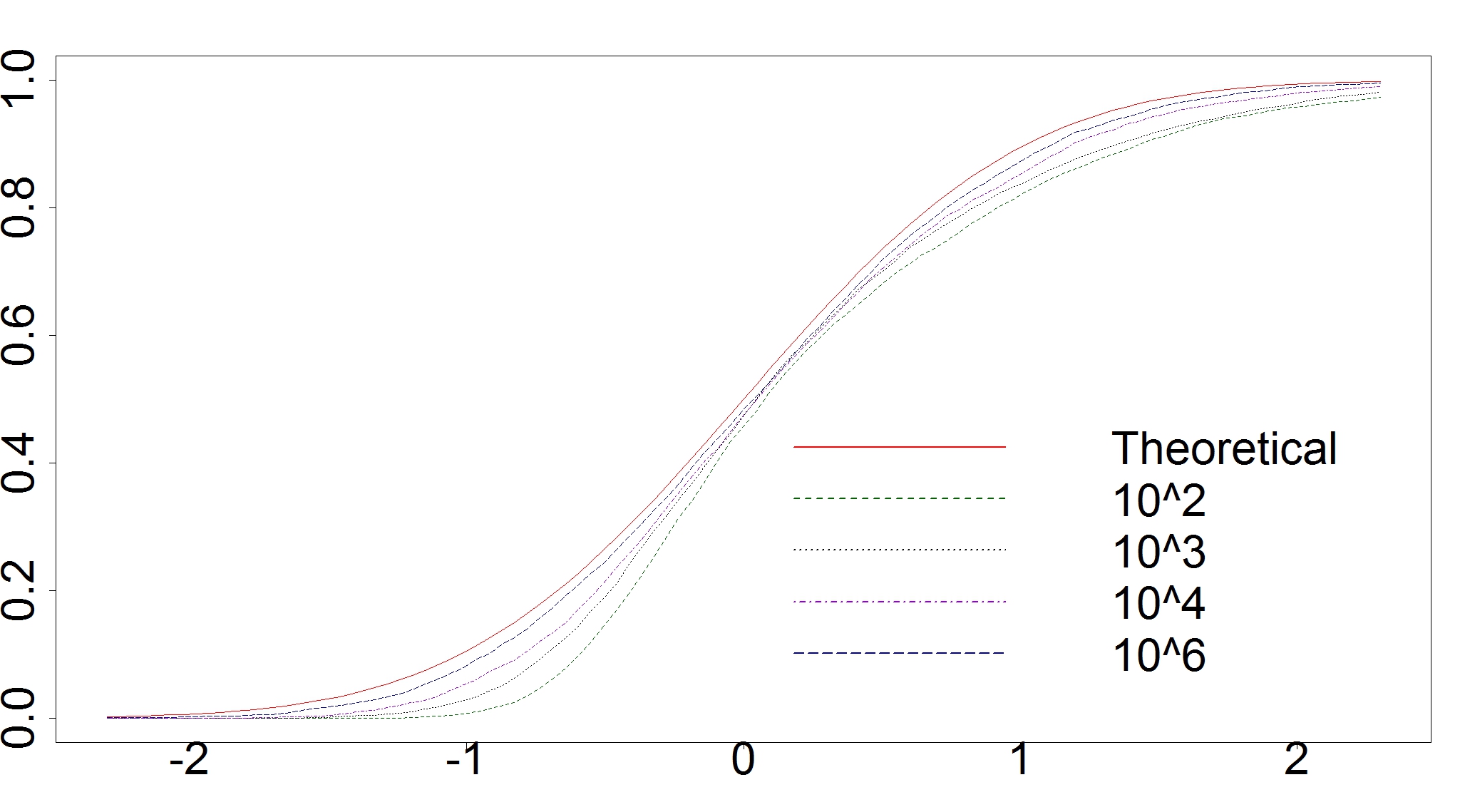}
		\caption{Expected Shortfall estimator}
		\label{subfig:expectedshort_ns_1}
	\end{subfigure}
	\captionsetup{font={small}, labelfont={bf}, width=.95\linewidth}
	\caption[ Example ~\ref{subsec:root_order_two}; Asymptotics of $\big(\widehat{q}_{n,\nicefrac{1}{2}},\widehat{es}_{n,\nicefrac{1}{2}}\big)$ for $m=5\cdot 10^3$]{The left picture (a) shows the limit (red solid) and the estimated distribution function of $n^{\nicefrac{1}{6}}\big(\widehat{q}_{n,\nicefrac{1}{2}}-1\big)$ for Section \ref{subsec:root_order_two} with $n=10^2$ (green dashed), $n=10^3$ (black dotted) and $n=10^4$ (purple dot-dashed), while the right picture (b) shows the estimated and the limit (red solid) distribution function of $\sqrt{n}\big(\widehat{es}_{n,\nicefrac{1}{2}}-\nicefrac{1}{4}\big)$ for $n=10^2$ (green dashed), $n=10^3$ (black dotted), $n=10^4$ (purple dot-dashed) and in addition $n=10^6$ (dark blue long-dashed).} 
	\label{fig:root_order_two_cdf_1}
\end{figure}

In Figure~\ref{fig:root_order_two_comm_dens_1} we used $n=10^6$ and increased the number of iterations to $m=5\cdot 10^4$ in order to nonparametrically estimate the joint density function of $\big(n^{\nicefrac{1}{6}}\big(\widehat{q}_{n,\nicefrac{1}{2}}-1\big),\sqrt{n}\big(\widehat{es}_{n,\nicefrac{1}{2}}-\nicefrac{1}{4}\big)\big)$, using the \textbf{R}-package \texttt{ks}, and compared this estimate to the density of the asymptotic distribution. 

\begin{figure}
	\centering
	\begin{subfigure}{.5\textwidth}
		\centering
		\includegraphics[width=\linewidth]{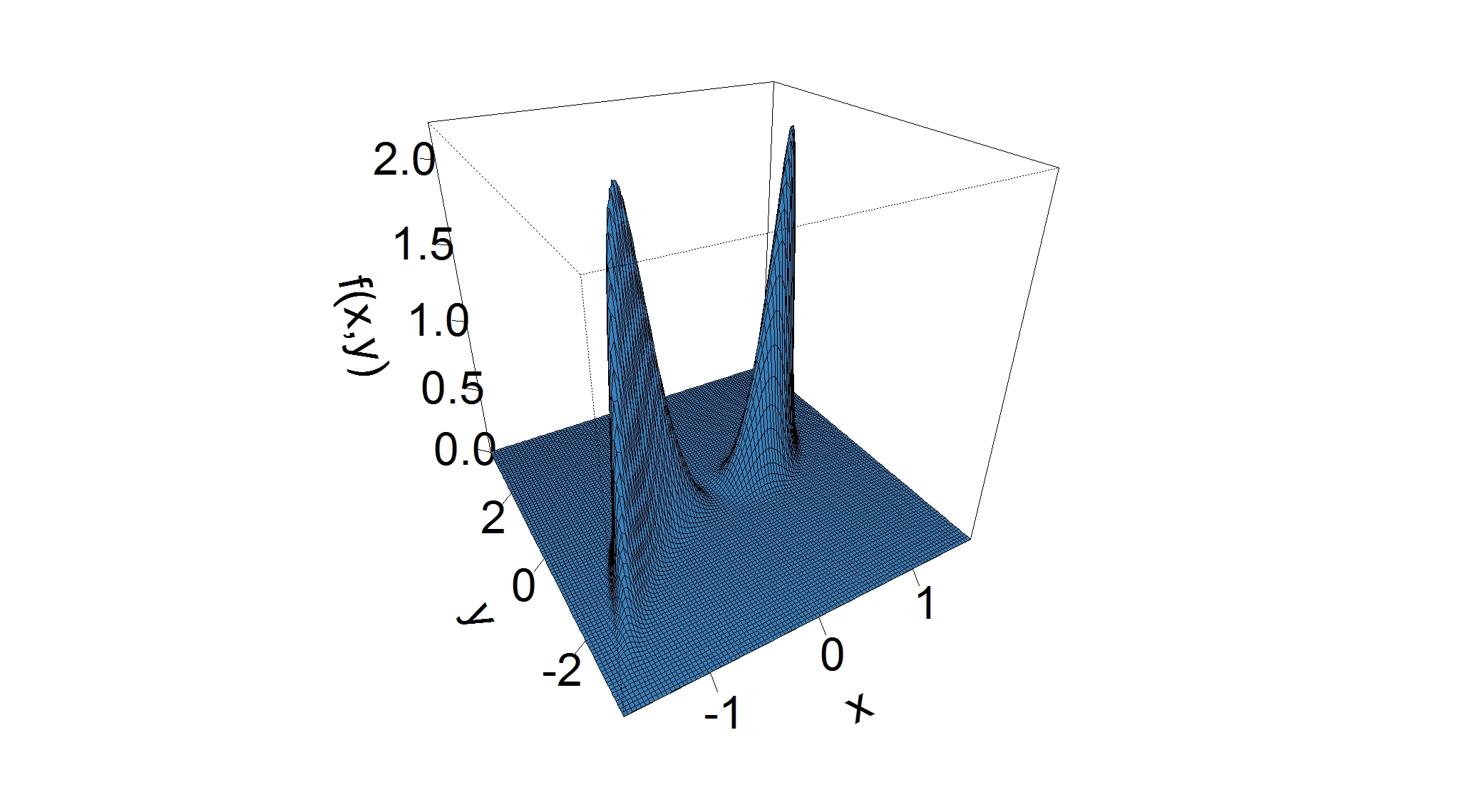}
		\captionsetup{font={small}, labelfont={bf}, width=.9\linewidth}
		\caption{Joint density of limit distribution}
		\label{subfig:theoretical_common_limit}
	\end{subfigure}%
	\begin{subfigure}{.5\textwidth}
		\centering
		\includegraphics[width=\linewidth]{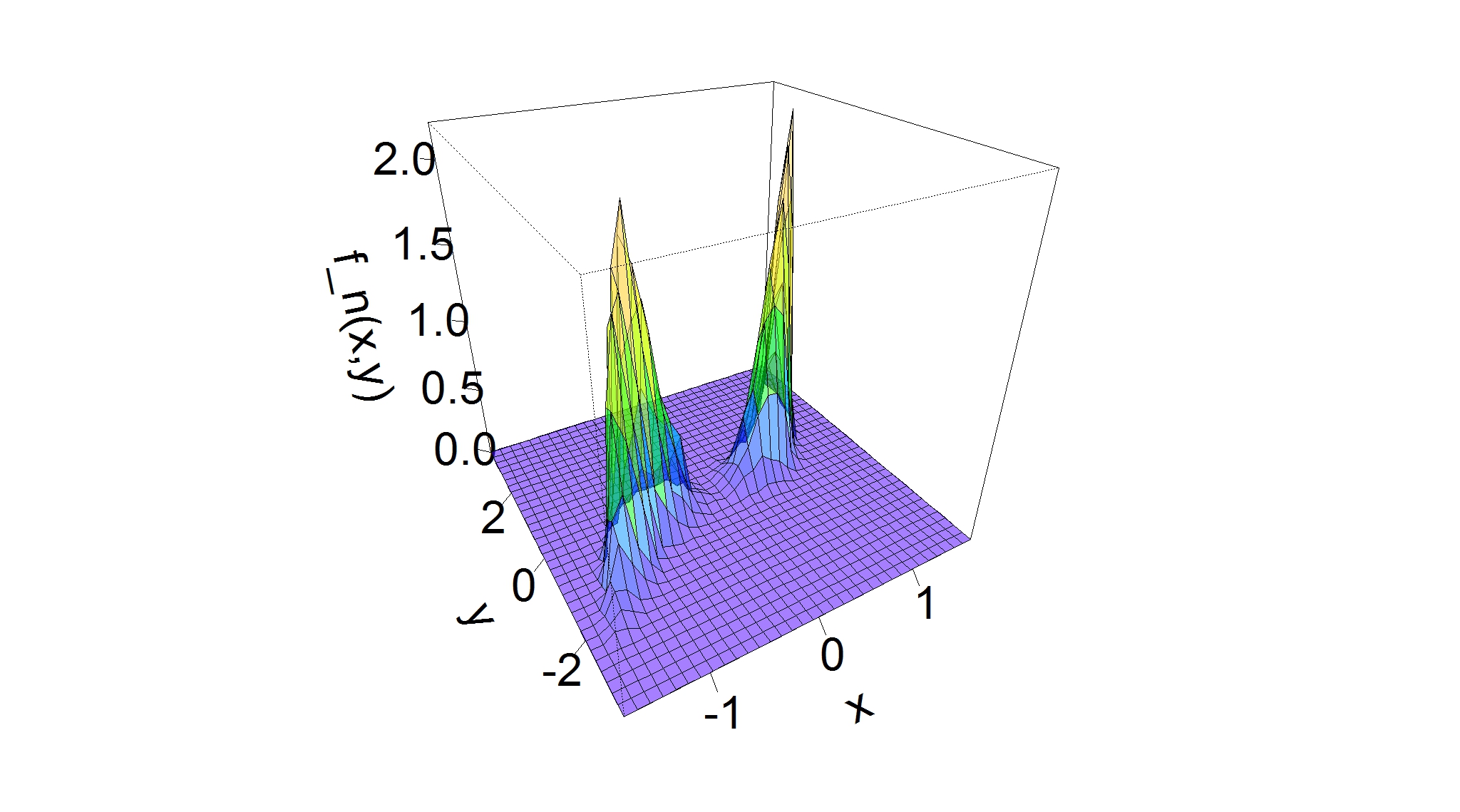}
		\captionsetup{font={small}, labelfont={bf}, width=.9\linewidth}
		\caption{Estimated joint density}
		\label{subfig:est_common_dens}
	\end{subfigure}
	\begin{subfigure}{.7\textwidth}
		\centering
		\includegraphics[width=\linewidth]{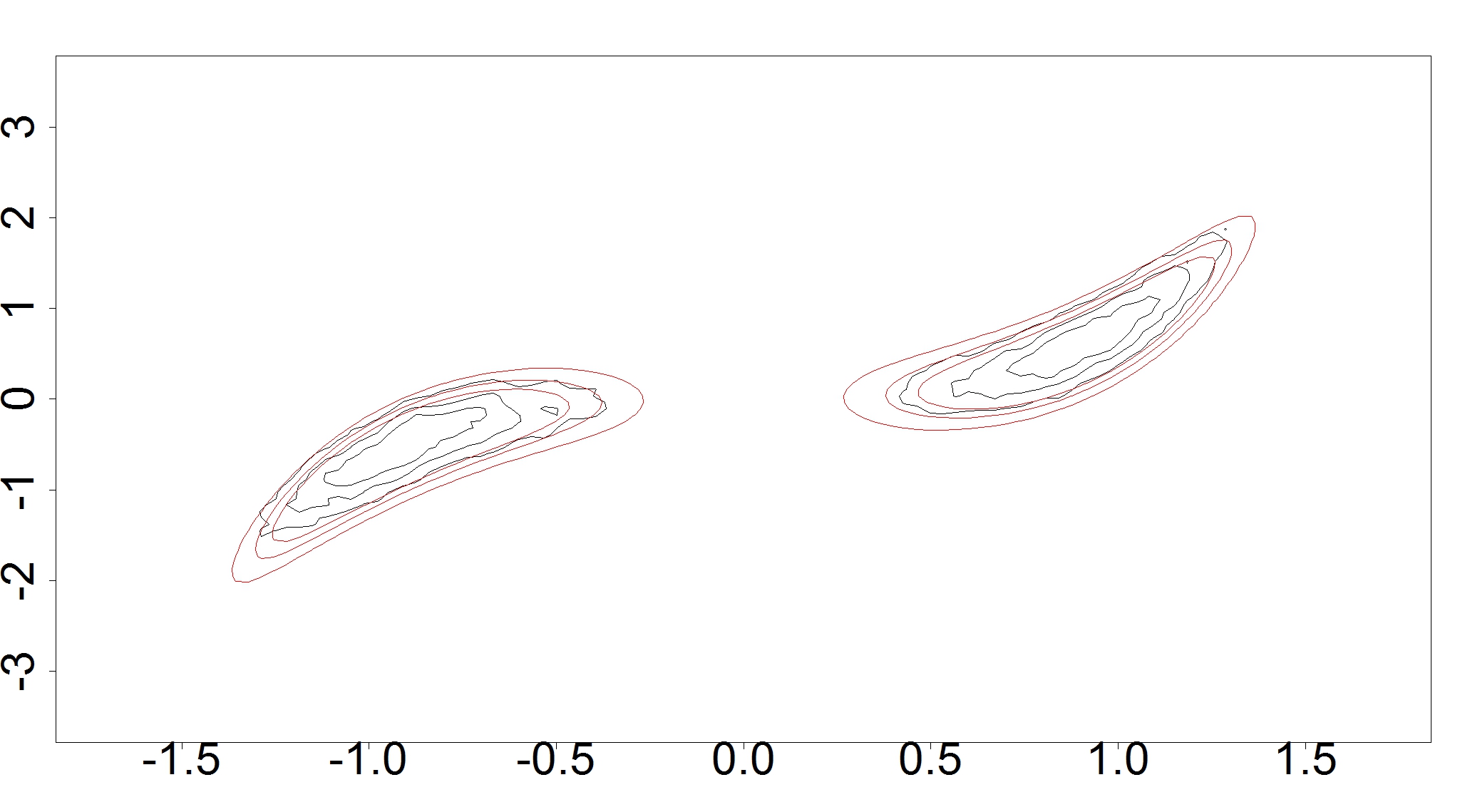}
		\captionsetup{font={small}, labelfont={bf}, width=.9\linewidth}
		\caption{Contour plots}
	\end{subfigure}
	\captionsetup{font={small}, labelfont={bf}, width=.9\linewidth}
	\caption[ Example~\ref{subsec:root_order_two}; Common asymptotics of $\big(\widehat{q}_{n,\nicefrac{1}{2}},\widehat{es}_{n,\nicefrac{1}{2}}\big)$ for $m=5\cdot 10^4$]{The above two images show (a) the limit joint density function and (b) the estimated joint density function of $\big(n^{\nicefrac{1}{6}}\big(\widehat{q}_{n,\nicefrac{1}{2}}-1\big),\sqrt{n}\big(\widehat{es}_{n,\nicefrac{1}{2}}-\nicefrac{1}{4}\big)\big)$. The image (c) shows the contour lines ($75\%,\, 50\%,\,25\%$ from outer to inner lines) of the theoretical (red) and the estimated (black) density in the above example. The shape of the theoretical distribution is captured well.}
	\label{fig:root_order_two_comm_dens_1}
\end{figure}

\renewcommand{\arraystretch}{1}


\section{Conclusions and discussion}\label{sec:discuss}

We show that the assumption of having a positive density at the $\alpha$-quantile, required for the quantile estimate to be asymptotically normal at $\sqrt{n}$-rate, are not required for asymptotic normality of the expected shortfall. 

The asymptotic variance of the ES can be estimated by forming a sample-counterpart expression. Alternatively, one may use the bootstrap. For the quantile in non-standard situations, \citet{knightboot} shows that the simple $n$-out-of-$n$ bootstrap is not consistent, but subsampling works. For the marginal asymptotic distribution of the expected shortfall, however, additional simulations indicate that the $n$-out-of-$n$ bootstrap is consistent, even without regularity of the density at the quantile. 

In this paper we only considered i.i.d.~data. Quantile and expectile estimation is often applied to financial time series, and therefore extensions of the results to dependent data would be useful. These should be possible but the details, in particular general M-estimation theory based on dependent data by using the argmax-continuity theorem, still need to be developed.  

Finally, the analysis of the expected shortfall as a process in the level $\alpha$ would be of some interest, in particular to study general spectral risk measures when not assuming a finitely-supported spectral measure.

\section*{Acknowledgements}

Tobias Zwingmann acknowledges financial support from of the Cusanuswerk for providing a dissertation scholarship.

\section{Proofs}\label{sec:proofs}
{\small 
	\subsection{Proofs of Propositions \ref{lem:connect_to_emp_vers}, \ref{thm:emp_vers_cons} and \ref{prop:theassumptionA}}
	
	We may write (\ref{eq:s_rep_1}) equivalently as
	\begin{align}\label{eq:s_rep_2}
	\begin{split}
	S(x_1, x_2; y) &=\big(1+\alpha^{-1}G(x_2)\big)\, \big(1_{ y \leq x_1} - \alpha \big) (x_1-y) + G(x_2) \, \big(x_2 -y\big) - \mathcal{G}(x_2) 
	\end{split}
	\end{align}

	\begin{proof}[{\sl Proof of Proposition \ref{lem:connect_to_emp_vers}}]
		Define the functions \begin{align*}
		\rho_{\alpha}(x_1;y) &= \big(1_{y\leq x_1}-\alpha\big)(x_1-y),\qquad 
		g(x_2)=\big(1+\alpha^{-1}\,G(x_2)\big) ,\\
		h(x_2;y)&= G(x_2)(x_2-y)-\mathcal{G}(x_2)
		\end{align*}
		so that 
		\[S(x_1,x_2;y)= g(x_2)\rho_{\alpha}(x_1;y)+h(x_2;y),\] 
		see (\ref{eq:s_rep_2}), hence 
		\begin{equation*}
		(\widehat{q}_{n,\alpha}, \widehat{es}_{n,\alpha}) \in \argmin_{(x_1,x_2)\in\R^2} \Big(\, g(x_2) \sum_{i=1}^{n}\rho_{\alpha}(x_1;X_i) + \sum_{i=1}^{n}h(x_2;X_i)\, \Big)
		\end{equation*}
		holds. The minimal value equals
		\begin{equation}\label{eq:minimizepartial}
		\min_{x_2\in\R}\, \Big( \,g(x_2) \Big(\min_{x_1\in\R} \sum_{i=1}^{n}\rho_{\alpha}(x_1;X_i)\Big) + \sum_{i=1}^{n}h(x_2;X_i)\, \Big),
		\end{equation}
		so that the minimizer in the first coordinate does not depend on the choice of $x_2$ and it follows that \begin{equation*}
		\widehat{q}_{n,\alpha} \in \argmin_{x_1\in\R} \sum_{i=1}^{n} \rho_{\alpha}(x_1;X_i) = [X_{\lceil n\,\alpha \rceil:n}, X_{\lceil n\,\alpha \rceil+1:n}),
		\end{equation*} 
		which includes the empirical quantile $\overline{q}_{n,\alpha}$.

		From (\ref{eq:minimizepartial}), $\widehat{es}_{n,\alpha}$ minimizes 
		\begin{equation*}
		x_2\mapsto \sum_{i=1}^{n} S\big(\widehat{q}_{n,\alpha},x_2;X_i\big) = \sum_{i=1}^{n}\, \big( g(x_2)\rho_{\alpha}(\widehat{q}_{n,\alpha};X_i)+h(x_2;X_i)\big).
		\end{equation*} 
		The partial derivatives of the functions $g, h$ are given by \begin{align*}
		\partial_{x_2} g(x_2) &= \alpha^{-1}G'(x_2) \quad\text{and} \quad 
		\partial_{x_2} h(x_2;y) = G'(x_2)(x_2-y).
		\end{align*}
		Thus \begin{align*}
		\partial_{x_2} \, \sum_{i=^1}^{n} S\big(\widehat{q}_{n,\alpha},x_2;X_i\big) &= G'(x_2) \,\sum_{i=1}^{n}\big(\alpha^{-1}\rho_{\alpha}(\widehat{q}_{n,\alpha};X_i)+x_2-X_i\big)\\
		&= G'(x_2) \,\sum_{i=1}^{n}\big(x_2-\widehat{q}_{n,\alpha} + \alpha^{-1}1_{X_i\leq \widehat{q}_{n,\alpha}}(\widehat{q}_{n,\alpha}-X_i)  \big).
		\end{align*}
		As $G'(x_2)>0$ by assumption, setting the above derivative equal to zero is equivalent to \begin{align*}
		0&=n\,x_2- n\, \widehat{q}_{n,\alpha} + \alpha^{-1}\sum_{i=1}^{n} 1_{X_i\leq \widehat{q}_{n,\alpha}}(\widehat{q}_{n,\alpha}-X_i).
		\end{align*} 
		By multiplying this with $1/n$ and reorganising the resulting equation the claim follows. 	
		
		For the final estimate, we observe that by the above,
		\begin{equation*}
		\Big|\widehat{es}_{n,\alpha}-\alpha^{-1}\mathbb{E}_n\big[Y \,1_{Y\leq \widehat{q}_{n,\alpha}}\big]\Big| = \alpha^{-1}\widehat{q}_{n,\alpha}\big|\alpha-\mathbb{E}_n[1_{Y\leq \widehat{q}_{n,\alpha}}]\big|.
		\end{equation*}
		So it remains to discuss $\big|\alpha-\mathbb{E}_n[1_{Y\leq \widehat{q}_{n,\alpha}}]\big|$: As $\widehat{q}_{n,\alpha} \in [X_{\lceil n\,\alpha\rceil:n},X_{\lceil n\,\alpha\rceil+1:n})$ we know that $\mathbb{E}_n[1_{Y\leq \widehat{q}_{n,\alpha}}] = \lceil n\, \alpha\rceil /n$ and thus we obtain \begin{equation*}
		\Big|\alpha - \mathbb{E}_n[1_{Y\leq \widehat{q}_{n,\alpha}}]\Big| = \frac{1}{n}\Big|n\,\alpha - \lceil n\, \alpha\rceil\Big|\leq \frac{1}{n}, 
		\end{equation*}
		which implies the assertion. 
	\end{proof}
	
	\begin{proof}[{\sl Proof of Proposition \ref{thm:emp_vers_cons}}]
		By the law of large numbers and the definition (\ref{eq:lowertailes}) of $es_{\alpha}$, 
		\begin{equation*}
		\Big|\alpha^{-1} \mathbb{E}_n\big[Y \,1_{Y\leq q_{\alpha}}\big]  - es_{\alpha}\Big| = o_{\mathcal{P}}(1).
		\end{equation*}
		
		This implies that
		\begin{align*}
		\Big|\alpha^{-1}\mathbb{E}_n[Y\,1_{Y\leq q_n}] - es_{\alpha}\Big| 
		&= \alpha^{-1}\Big| \mathbb{E}_n[Y\big(1_{Y\leq q_n} - 1_{Y\leq q_{\alpha}}\big)] \Big| + o_{\mathcal{P}}(1),
		\end{align*}
		and it remains to show that 
		\begin{equation}\label{eq:helpsecondlem}
		\mathbb{E}_n[Y\big(1_{Y\leq q_n} - 1_{Y\leq q_{\alpha}}\big)] \Big| = o_{\mathcal{P}}(1).
		\end{equation}
		Recall that since $\alpha\in (0,1)$ the quantile $q_{\alpha}$ is finite, hence $|q_{\alpha}|+1\leq c<\infty$.  Now let $\eta>0$ and choose $1\geq\delta>0$ such that $F(q_{\alpha}+\delta)-F(q_{\alpha}-\delta)\leq \alpha \,\eta/(2\,c)$ -- this is possible since $F$ is continuous in $q_{\alpha}$. On the set $\{|q_n-q_{\alpha}| \leq \delta\}$ the integral above is smaller than (or equal to) \begin{equation*}
		\max \{|q_\alpha-\delta|,|q_{\alpha}+\delta| \}\, \mathbb{E}_n \big[1_{Y \leq q_{\alpha}+\delta}-1_{Y\leq q_{\alpha}-\delta}\big] \leq c \,\mathbb{E}_n \big[1_{Y \leq q_{\alpha}+\delta}-1_{Y\leq q_{\alpha}-\delta}\big].
		\end{equation*}
		Next note that $\mathbb{E}_n \big[1_{Y \leq q_{\alpha}+\delta}-1_{Y\leq q_{\alpha}-\delta}\big]$ converges in probability to $\mathbb{E}(1_{Y \leq q_{\alpha}+\delta}-1_{Y\leq q_{\alpha}-\delta})$. Thus it follows that \begin{align*}
		&\mathcal{P}\Big(\alpha^{-1}\Big|  \mathbb{E}_n[Y\big(1_{Y\leq q_n} - 1_{Y\leq q_{\alpha}}\big)] \Big| \geq \eta \Big) \\[5pt]
		&\leq \mathcal{P}\big(|q_n-q_{\alpha}| \geq \delta \big) +\mathcal{P}\big( \mathbb{E}_n \big[1_{Y \leq q_{\alpha}+\delta}-1_{Y\leq q_{\alpha}-\delta}\big] \geq \alpha \,\eta / c  \big)\\[5pt]
		&\leq \mathcal{P}\big(|q_n-q_{\alpha}| \geq \delta \big) + \mathcal{P}\big( \big|(\mathbb{E}_n-\mathbb{E}) \big[1_{Y \leq q_{\alpha}+\delta}-1_{Y\leq q_{\alpha}-\delta}\big]\big| \geq \alpha \,\eta / (2\,c)  \big).
		\end{align*}
		The last two probabilities can be made small by choosing $n$ big enough ($|q_n-q_{\alpha}|=o_{\mathcal{P}}(1)$ and $\big|(\mathbb{E}_n-\mathbb{E}) \big[1_{Y \leq q_{\alpha}+\delta}-1_{Y\leq q_{\alpha}-\delta}\big]\big| =o_{\mathcal{P}}(1)$).
		
		For the statement concerning $\widetilde{es}_{n, \alpha}$, as in the proof of Proposition \ref{lem:connect_to_emp_vers} we obtain the generalization of (\ref{eq:esscoringexpl}), 
		\begin{equation*}
		\widetilde{es}_{n,\alpha} = \alpha^{-1}q_{n}\big(\alpha-\mathbb{E}_n \big[1_{Y\leq q_n}\big]\big) +\alpha^{-1} \mathbb{E}_n\big[Y\,1_{Y\leq q_n}\big].
		\end{equation*} 	
		
		Since from the first part of the proof, the last term above converges to $es_{\alpha}$ in probability, it remains to show $|\alpha^{-1}q_{n}\big(\alpha-\mathbb{E}_n \big[1_{Y\leq q_n}\big]\big)| = o_{\mathcal{P}}(1)$. For this it suffices to show $|\alpha-\mathbb{E}_n \big[1_{Y\leq q_n}\big]| = o_{\mathcal{P}}(1)$, as $\alpha^{-1}\,q_n$ is tight by assumption. The argument for this part is same as for (\ref{eq:helpsecondlem}). This concludes the proof of the proposition. 
	\end{proof}
	
	\begin{proof}[{\sl Proof of Proposition \ref{prop:theassumptionA}}]
		a.~The classification of $\psi_\alpha$ is shown in \citet[§~4]{smirnov1952limit}. Uniqueness of $(a_n)$ up to asymptotic equivalence follows from the convergence of types theorem and the distributional convergence of $a_n(\widehat{q}_{n,\alpha}-q_{\alpha})$ to a non-degenerate limit distribution under Assumption [A], see \citet{Knight2002LimDistr} or the proof of Theorem \ref{thm:main_thm_es}.\\ 
		b. If $(X_{\lceil n\,\alpha \rceil:n}-q_{\alpha})=o_{\mathcal{P}}(1)$, then one can find a sequence $a_n \to \infty$ for which $a_n(X_{\lceil n\,\alpha \rceil:n}-q_{\alpha}) = o_{\mathcal{P}}(1)$ is still true. 
		By Theorem~4, \citet{smirnov1952limit}, this holds if and only if \begin{equation}\label{eq:mot_ass_a}
		\frac{F(q_{\alpha}+t/a_n)-\lambda_{n,\alpha}}{\tau_{n,\alpha}} \to u(t).
		\end{equation}
		Here, $u:\R\to\overline{\R}$ is a non-decreasing function uniquely determined by \begin{equation*}
		1_{[0, \infty)}(t) = \frac{1}{\sqrt{2\pi}}\,\int_{-\infty}^{u(t)}\exp(-x^2/2)\,\dif x;
		\end{equation*} 
		further \begin{align*}
		\lambda_{n,\alpha} = \frac{\lceil n\,\alpha \rceil}{n+1},\quad \iota_{n,\alpha} = \frac{n-\lceil n\,\alpha \rceil + 1}{n+1}\quad \text{and}\quad \tau_{n,\alpha} = \sqrt{\frac{\lambda_{n,\alpha} \iota_{n,\alpha}}{n+1}}.
		\end{align*}
		With these definitions note that \begin{align*}
		\lambda_{n,\alpha} \to \alpha \quad \text{and}\quad \iota_{n,\alpha} \to 1-\alpha
		\end{align*}
		holds. Thus the convergence in (\ref{eq:mot_ass_a}) is equivalent to \begin{align*}
		\frac{\sqrt{n+1}\big(F(q_{\alpha}+t/a_n)-\alpha\big)}{\sqrt{\alpha(1-\alpha)}} \to u(t),
		\end{align*}
		which then yields the convergence assumed in [A] with $\psi_{\alpha}(t) =\sqrt{\alpha(1-\alpha)}\,u(t)$ and $a_n$ as chosen above.

	\end{proof}
	
	\subsection{General auxillary results}\label{sec:generalhelpres}

	The rates of convergence will be proved using the next theorem, which is a generalization of Theorem~5.52, \citet{vdv2000asympstat}, and similar to his  Theorem 5.23. We will provide a proof for convenience. Assume that $(\Theta_0, d_0), (\Theta_1, d_1)$ are metric spaces and that for all $\eta\in \Theta_0$, $\vartheta\in\Theta_1$, the map $y\mapsto m_{\eta, \vartheta}(y)$ is measurable.  To unify notation, we will use $\mathbb{E}_n$ and $Y$ in the formulation of the theorem, but note that $Y$ here could also have a more general form (not needing a finite first moment or $Y$ to be real).
	
	\begin{theorem}\label{thm:nuisance_rate_of_conv}
		Assume that for fixed $C$ and $\alpha>\beta$, every $n\in\N$ and all sufficiently small $\varepsilon,\delta>0$ it holds that 
		\begin{align}\label{eq:convrategen1}
		\inf_{d_0(\eta, \eta_0)\leq \varepsilon} \inf_{d_1(\vartheta, \vartheta_0)\geq \delta} \mathbb{E}\big[m_{\eta,\vartheta}(Y) - m_{\eta, \vartheta_0}(Y)\big] \geq C\delta^{\alpha}
		\end{align}
		and 
		\begin{equation}\label{eq:convrategen2}
		\mathbb{E}^*\Big[\sup_{d_0(\eta,\eta_0)\leq \varepsilon}\sup_{d_1(\vartheta, \vartheta_0)\leq \delta} \big|\sqrt{n}(\mathbb{E}_n-\mathbb{E})\big(m_{\eta, \vartheta}(Y) - m_{\eta, \vartheta_0}(Y)\big)\big|\Big] \leq C\delta^{\beta}. 
		\end{equation}
		Additionally suppose that $\eta_n$ converges to $\eta_0$ in (outer) probability and $\widehat{\vartheta}_n$ converges to $\vartheta_0$ in (outer) probability and fulfils \begin{equation*}
		\mathbb{E}_n\big[m_{\eta_n,\widehat{\vartheta}_n}(Y)\big]\leq  \mathbb{E}_n\big[m_{\eta_n, \vartheta_0}(Y)\big] + O_{\mathcal{P}}(n^{\nicefrac{\alpha}{(2(\beta - \alpha))}}).
		\end{equation*}
		Then $n^{\nicefrac{1}{(2(\alpha-\beta))}}d_1(\widehat{\vartheta}_n,\vartheta_0) = O_{\mathcal{P}}^*(1)$. 
	\end{theorem}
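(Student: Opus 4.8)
The strategy is the standard peeling (or slicing) argument used to prove rates of convergence for M-estimators, adapted to the present two-parameter setting in which $\eta_n$ plays the role of a nuisance that has already been localized near $\eta_0$. I would partition the parameter space for $\vartheta$ into shells $S_{j,n} = \{\vartheta : 2^{j-1} < n^{1/(2(\alpha-\beta))} d_1(\vartheta,\vartheta_0) \leq 2^j\}$ for integer $j$, and show that the probability that $\widehat\vartheta_n$ lands in a shell with large $j$ is summable and small. Concretely, fix $\varepsilon>0$ small enough that \eqref{eq:convrategen1} and \eqref{eq:convrategen2} hold on the $\varepsilon$-neighbourhood of $\eta_0$; since $\eta_n \to \eta_0$ in outer probability, the event $\{d_0(\eta_n,\eta_0) \le \varepsilon\}$ has probability tending to one, so it suffices to bound everything on this event.

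The core estimate goes as follows. On the event that $\widehat\vartheta_n \in S_{j,n}$, the near-minimizing property gives $\mathbb{E}_n[m_{\eta_n,\widehat\vartheta_n}(Y) - m_{\eta_n,\vartheta_0}(Y)] \le O_{\mathcal P}(n^{\alpha/(2(\beta-\alpha))})$, and hence, writing $\delta_j = 2^j n^{-1/(2(\alpha-\beta))}$ for the outer radius of the shell,
\[
\mathbb{E}[m_{\eta_n,\widehat\vartheta_n}(Y) - m_{\eta_n,\vartheta_0}(Y)] \le n^{-1/2}\,\big|\sqrt n(\mathbb{E}_n-\mathbb{E})(m_{\eta_n,\widehat\vartheta_n}(Y)-m_{\eta_n,\vartheta_0}(Y))\big| + O_{\mathcal P}(n^{\alpha/(2(\beta-\alpha))}).
\]
The left side is bounded below by $C\delta_j^\alpha \gtrsim C 2^{j\alpha} n^{-\alpha/(2(\alpha-\beta))} = C 2^{j\alpha} n^{\alpha/(2(\beta-\alpha))}$ by \eqref{eq:convrategen1} (using $d_1(\widehat\vartheta_n,\vartheta_0) \ge 2^{j-1}n^{-1/(2(\alpha-\beta))}$, up to the constant $2^{-\alpha}$). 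The supremum of the empirical-process term over the shell has expectation at most $C\delta_j^\beta = C 2^{j\beta} n^{-\beta/(2(\alpha-\beta))}$ by \eqref{eq:convrategen2}, so after multiplying by $n^{-1/2}$ it is $O(2^{j\beta} n^{-1/2 - \beta/(2(\alpha-\beta))}) = O(2^{j\beta} n^{\alpha/(2(\beta-\alpha)) - 1/2 \cdot \frac{\alpha-\beta}{\alpha-\beta}})$; a short exponent computation shows this is exactly of order $2^{j\beta} n^{\alpha/(2(\beta-\alpha))}$ as well, so all three terms carry the common factor $n^{\alpha/(2(\beta-\alpha))}$. Dividing through, Markov's inequality yields
\[
\mathcal P^*\big(\widehat\vartheta_n \in S_{j,n},\ d_0(\eta_n,\eta_0)\le\varepsilon\big) \lesssim \frac{2^{j\beta}}{2^{j\alpha}} + \text{(contribution of the }O_{\mathcal P}\text{ term)} = 2^{-j(\alpha-\beta)} + \cdots,
\]
and since $\alpha > \beta$ this is summable in $j$; choosing the cutoff $j \ge J$ with $J$ large makes $\sum_{j\ge J} 2^{-j(\alpha-\beta)}$ arbitrarily small, while the $O_{\mathcal P}$-term is handled by first conditioning on the event that that term is bounded by a large constant $M$. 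One also has to separately discard the shells with $\delta_j$ too large for \eqref{eq:convrategen1}–\eqref{eq:convrategen2} to apply (i.e. $\delta_j$ beyond the range of ``sufficiently small $\delta$''); there $\widehat\vartheta_n \to \vartheta_0$ in outer probability already gives vanishing probability. Summing over the relevant shells gives $\mathcal P^*(n^{1/(2(\alpha-\beta))} d_1(\widehat\vartheta_n,\vartheta_0) \ge 2^J) \to 0$ as $J \to \infty$ uniformly in $n$ large, which is precisely tightness, i.e. $n^{1/(2(\alpha-\beta))} d_1(\widehat\vartheta_n,\vartheta_0) = O_{\mathcal P}^*(1)$.

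The main obstacle is bookkeeping the exponents and the constants consistently — in particular verifying that the empirical-process bound, after the $n^{-1/2}$ rescaling, and the allowed slack $O_{\mathcal P}(n^{\alpha/(2(\beta-\alpha))})$ really match the lower bound $C\delta_j^\alpha$ up to the shell index $2^{j(\alpha-\beta)}$, so that the geometric series in $j$ actually converges. A secondary technical point is the careful use of outer probability and outer expectation throughout (since the suprema in \eqref{eq:convrategen2} need not be measurable), which forces one to state the peeling inequalities with $\mathcal P^*$ and $\mathbb{E}^*$ and to invoke the submultiplicativity of outer expectation when splitting over the event $\{d_0(\eta_n,\eta_0)\le\varepsilon\}$; this is exactly why the theorem is phrased as a refinement of van der Vaart's Theorem 5.52 and why a self-contained proof is worth giving.
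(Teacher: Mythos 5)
Your proposal is correct and follows essentially the same route as the paper's proof in the technical supplement: the same peeling over shells $S_{j,n}$ at rate $r_n=n^{1/(2(\alpha-\beta))}$, the same decomposition into the deterministic lower bound from \eqref{eq:convrategen1} and the empirical-process bound from \eqref{eq:convrategen2} via Markov's inequality, the same handling of the $O_{\mathcal P}(n^{\alpha/(2(\beta-\alpha))})$ slack and of the nuisance $\eta_n$ by restricting to $\{d_0(\eta_n,\eta_0)\le\varepsilon\}$, and the same geometric series $\sum_{j\ge M}2^{-j(\alpha-\beta)}$. Your exponent bookkeeping checks out, so no gaps to report.
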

	
	For convenience, a proof of Theorem \ref{thm:nuisance_rate_of_conv} is provided in the technical supplement. 
	
	\medskip

	The next result is essential for obtaining the joint asymptotic distribution by use of the argmax-theorem when having different rates for the processes to be optimized.  
	
	\begin{lemma}\label{lem:diff_between_minimizer}
		Let $M_n$ and $M_n'$ be real valued processes, where $M_n'$ admits the representation \begin{align}\label{eq:rep_Mn_strich}
		M_n'(u_2) = N_n(u_2) + R_n
		\end{align}
		for $u_2\in\R^k$, where $R_n$ is a sequence of random variables not depending on $u_2$. Assume that
		\begin{equation}\label{eq:argmaxunifapprox}
		\sup_{u_2\in K} \big|M_n(u_2)-M_n'(u_2)\big| = o_{\mathcal{P}}(1)
		\end{equation}
		and that $N_n \Rightarrow N$ holds in $\ell^{\infty}(K_2)$ for every compact set $K_2\subset \R^k$ and some process $N$. Choose $(e_n,u_n)$ ($\in\R^{2k}$) as minimizer of $(M_n,M_n')$ and assume in addition that $e_n=O_{\mathcal{P}}(1)$ and $u_n \Rightarrow u_0$ (as variables in $\R^k$), where $u_0$ is the unique minimizer of $N$ (assuming all of these variables exist). 	Then 
		\[ e_n = u_n + o_{\mathcal{P}}(1).\]
	\end{lemma}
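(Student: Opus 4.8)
The plan is to exploit the fact that $M_n$ and $M_n'$ are uniformly close on compacts, that $M_n'$ differs from $N_n$ only by the $u_2$-free term $R_n$, and that both minimizers are tight, to force $e_n$ and $u_n$ into the same shrinking neighbourhood. Write $m_n = \inf_{u_2} M_n(u_2) = M_n(e_n)$ and $m_n' = \inf_{u_2} M_n'(u_2) = M_n'(u_n)$. The first step is to observe that $|m_n - m_n'| = o_{\mathcal P}(1)$: indeed, on the compact set that traps both $e_n$ and $u_n$ with probability close to one, $M_n(u_n) \geq m_n = M_n(e_n)$ gives $m_n \leq M_n(u_n) = M_n'(u_n) + o_{\mathcal P}(1) = m_n' + o_{\mathcal P}(1)$, and symmetrically $m_n' \leq M_n'(e_n) + \text{(nothing)} \leq M_n(e_n) + o_{\mathcal P}(1) = m_n + o_{\mathcal P}(1)$, using \eqref{eq:argmaxunifapprox} in both directions.

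Next I would convert this into a near-optimality statement for $e_n$ with respect to the process $N_n$. Since $M_n'(u_2) = N_n(u_2) + R_n$, evaluating at $e_n$ and $u_n$ and subtracting $R_n$ gives $N_n(e_n) = M_n'(e_n) - R_n = M_n(e_n) - R_n + o_{\mathcal P}(1) = m_n - R_n + o_{\mathcal P}(1)$ and $N_n(u_n) = m_n' - R_n$, so by the previous paragraph $N_n(e_n) = N_n(u_n) + o_{\mathcal P}(1) = \inf_{u_2} N_n(u_2) + o_{\mathcal P}(1)$; that is, $e_n$ is an approximate minimizer of $N_n$. Crucially the additive constant $R_n$ — which need not be $O_{\mathcal P}(1)$ — cancels identically here, which is the whole point of the representation \eqref{eq:rep_Mn_strich}. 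Since $e_n = O_{\mathcal P}(1)$, this near-optimality is genuinely confined to a compact set.

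The final step is to apply the argmax-continuity theorem (in the form of Corollary 5.58 of \citet{vdv2000asympstat}, or a direct argument) to the sequence $e_n$: we have $N_n \Rightarrow N$ in $\ell^\infty(K_2)$ for every compact $K_2$, $e_n$ is uniformly tight, $e_n$ is an $o_{\mathcal P}(1)$-argmin of $N_n$, and $N$ has the unique minimizer $u_0$; hence $e_n \Rightarrow u_0$. Combined with the hypothesis $u_n \Rightarrow u_0$ and the fact that $(e_n, u_n)$ are constructed jointly, one gets $e_n - u_n \to 0$ in probability. To make the last implication rigorous one should argue on the joint level: restrict to the compact set $K_2$ containing both sequences with probability $\geq 1-\varepsilon$, and show that for any $\delta > 0$, $\{\,|e_n - u_n| > \delta\,\}$ forces $N_n$ to have two $\delta$-separated approximate minima inside $K_2$, which contradicts the uniform convergence $N_n \Rightarrow N$ together with the strict well-separatedness of $u_0$ as the unique minimizer of the limit $N$ (uniqueness plus continuity of $N$ gives $\inf_{|u_2 - u_0| \geq \delta} N(u_2) > N(u_0)$ on $K_2$).

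I expect the main obstacle to be the last implication rather than the cancellation of $R_n$, which is purely formal. Specifically, turning ``$e_n$ and $u_n$ both converge in distribution to $u_0$'' plus joint near-optimality into ``$e_n - u_n = o_{\mathcal P}(1)$'' requires care: marginal weak convergence to the same limit does not by itself give $e_n - u_n \to 0$, so one must use the quantitative well-separation of $u_0$ for $N$ and argue that on the good event both $e_n$ and $u_n$ lie in an arbitrarily small $N$-sublevel set around $u_0$, hence within $\delta$ of each other. A mild technical point is ensuring that the uniform-closeness set $K$ in \eqref{eq:argmaxunifapprox} and the tightness sets for $e_n, u_n$ can be taken to be the same compact $K_2$; this follows by enlarging $K_2$ and using that \eqref{eq:argmaxunifapprox} is assumed for a fixed compact $K$ — so one works throughout on $K \cap K_2$ and lets these sets exhaust $\R^k$.
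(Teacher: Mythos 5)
Your overall architecture is sound and genuinely different from the paper's. Steps 1--2 are correct and use the representation \eqref{eq:rep_Mn_strich} more directly than the paper does: cancelling $R_n$ shows that $u_n$ is the \emph{exact} minimizer of $N_n$ and that $e_n$ is an $o_{\mathcal{P}}(1)$-approximate minimizer of the \emph{same} process $N_n$, so the problem reduces to comparing two near-minimizers of a single sequence of processes. The paper never makes this reduction: it forms the sum process $\overline{M}_n(u_2,u_2')=M_n(u_2)+M_n'(u_2')$, whose minimizer is the pair $(e_n,u_n)$, proves the joint weak convergence $(e_n,u_n)\Rightarrow(u_0,u_0)$ by a portmanteau computation based on $(N_n,N_n)\Rightarrow(N,N)$ (the additive $2R_n$ cancels when comparing the infima over $F\cap K$ and over $K$), and then applies the continuous mapping theorem. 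Your reduction is conceptually cleaner; the paper's formulation is already in the exact shape needed for its portmanteau argument.

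Where your write-up is still only a sketch is precisely where the paper's proof does its work: passing from ``$e_n$ and $u_n$ are (near-)minimizers of $N_n$, $N_n\Rightarrow N$, and $N$ has a unique minimizer'' to $\mathcal{P}(|e_n-u_n|>\delta)\to 0$. You rightly observe that the marginal convergences $e_n\Rightarrow u_0$ and $u_n\Rightarrow u_0$ do not suffice (here $u_0$ is random, e.g.\ $u_0=-W_2$ in the application), but your fix is phrased as a pathwise contradiction with ``the uniform convergence $N_n\Rightarrow N$''. No such pathwise statement is available: $N_n\Rightarrow N$ is only distributional (possibly on different probability spaces), and the separation gap $\inf_{|u-u_0|\geq\delta,\,u\in K_2}N(u)-N(u_0)$ is a random variable admitting no deterministic positive lower bound, so one cannot argue event-by-event. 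The step can be completed along your lines by introducing, say, $g_\delta(f)=\inf\{\max(f(u),f(v)):u,v\in K_2,\ |u-v|\geq\delta\}-\inf_{K_2}f$, which is sup-norm continuous on $\ell^\infty(K_2)$; on the event $\{e_n,u_n\in K_2,\ |e_n-u_n|\geq\delta\}$ your Steps 1--2 give $g_\delta(N_n)\leq \epsilon_n$ with $\epsilon_n=o_{\mathcal{P}}(1)$, while $g_\delta(N_n)\Rightarrow g_\delta(N)$ by the continuous mapping theorem and $g_\delta(N)>0$ a.s.\ by uniqueness together with continuity/attainment of the minimum of $N$ on compacts (an assumption the paper's proof also uses implicitly in its strict inequality $\overline{N}(u_0,u_0)<\overline{N}(F\cap K)$); Slutsky and the portmanteau lemma then give $\limsup_n\mathcal{P}(|e_n-u_n|\geq\delta)\leq 2\varepsilon$. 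With that insertion your proof is complete; note also that \eqref{eq:argmaxunifapprox} must be read as holding for every compact $K$, since the compact set trapping $(e_n,u_n)$ is produced by tightness and is not known in advance.
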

	
	\begin{remark*}
		The sequence of processes $(M_n)$ need not converge and hence the argmax-continuity theorem cannot be applied directly to the minimizers $(e_n)$. The approximating processes $(M_n')$ converge apart from a sequence of random variables $(R_n)$ not depending on $u_2$.  
	\end{remark*}

	\begin{proof}[Proof of Lemma~\ref{lem:diff_between_minimizer}.]
		Let
		\begin{align*}
		\overline{M}_n(u_2, u_2') &= M_n(u_2) + M_n'(u_2'), \qquad 	\overline{M}_n'(u_2, u_2') = M_n'(u_2) + M_n'(u_2'),\\
		\overline{N}_n(u_2,u_2') &= N_n(u_2)+N_n(u_2'), \qquad 
		\overline{N}(u_2,u_2') = N(u_2)+N(u_2').
		\end{align*} 
		For $B\subset \R^k$ set
		\begin{equation*}
		N_n(B)=\inf_{u_2\in B} N_n(u_2)
		\end{equation*}
		and similarly for $N(B), M_n(B), M_n'(B)$ as well as $\overline{M}_n(C), \overline{M}_n'(C),  \overline{N}_n(C), \overline{N}(C)$ for $C\subset \R^{2k}$.
		
		We shall show $(e_n,u_n)\Rightarrow (u_0,u_0)$, so that from the continuous mapping theorem we deduce that $(e_n-u_n)$ converges to $0$ weakly and thus in probability. 
		
		For the weak convergence of $(e_n,u_n)$ we utilize the Portmanteau Theorem. Let $F\subset \R^{2k}$ be closed and let $\varepsilon>0$. Since $e_n=O_P(1)$ and $u_n=O_P(1)$ by assumption we can find a compact set $K\subset\R^2$ for which $P((e_n,u_n) \notin K)\leq \varepsilon$ and $P((u_0, u_0) \notin K)\leq \varepsilon$. 
		From (\ref{eq:argmaxunifapprox}) and the representation of $M_n'$ we have that
		\[ \overline{M}_n(F \cap K) = \overline{M}_n'(F \cap K) + o_{\mathcal{P}}(1) = \overline{N}_n(F \cap K) + o_{\mathcal{P}}(1) + 2\, R_n,\]
		and similarly for $\overline{M}_n(K)$. 
		Now if $(e_n,u_n) \in F\cap K$, then $\overline{M}_n(F\cap K)\leq \overline{M}_n(K)$ holds, and by the above this implies $\overline{N}_n(F\cap K)\leq \overline{N}_n(K)+o_P(1)$, thus
		\begin{equation}\label{eq:par_to_value}
		P((e_n,u_n) \in F\cap K) \leq P\Big(\overline{N}_n(F\cap K)\leq \overline{N}_n(K)+o_P(1)\Big).
		\end{equation}
		The process $N_n$ is asymptotically tight by assumption, hence $(N_n,N_n)$ is asymptotically tight by Lemma~1.4.3, \citet{vdvwellner1996weak}. The convergence of the finite dimensional distributions is fulfilled as $N_n\Rightarrow N$ and thus Theorem~1.5.4 of \citet{vdvwellner1996weak} yields $(N_n,N_n)\Rightarrow (N,N)$.
		Therefore, by the continuous mapping theorem the weak convergence $N_n(u_2) + N_n(u_2')\Rightarrow N(u_2)+N(u_2')$ in $\ell^{\infty}(K_2)$ for any $K_2\subset\R^{2k}$ compact follows. Hence -- again due to the continuous mapping theorem -- the convergence $\big(\overline{N}_n(F\cap K), \overline{N}_n(K)\big) \Rightarrow \big(\overline{N}(F\cap K), \overline{N}(K)\big)$ holds. Then Slutsky's lemma and the portmanteau lemma imply 
		\begin{equation}\label{eq:overline_N}
		\mathcal{P} \Big(\overline{N}_n(F\cap K)\leq \overline{N}_n(K)+o_P(1)\Big) \leq \mathcal{P}\Big( \overline{N}(F\cap K)\leq \overline{N}(K)\Big) + o(1).
		\end{equation}
		Since $(u_0,u_0)$ is the unique minimizer of $\overline{N}$ by assumption, on the event $\{(u_0,u_0)\in F^c\}$ the inequality $\overline{N}(u_0,u_0) < \overline{N}(F\cap K)$ is fulfilled. If we additionally are on the event $ \{ \overline{N}(F\cap K) \leq \overline{N}(K)\}$ we can deduce that $\overline{N}(u_0,u_0) < \overline{N}(K)$ must hold, hence $(u_0, u_0)\notin K$. This means 
		\begin{align}
		&\mathcal{P}\Big( \overline{N}(F\cap K) \leq \overline{N}(K) \Big) \leq \mathcal{P}\big((u_0,u_0)\notin K\big) + \mathcal{P}\big((u_0,u_0)\in F\big)\label{eq:set_disj_part}.
		\end{align} 
		Combining (\ref{eq:par_to_value}), (\ref{eq:overline_N}) and (\ref{eq:set_disj_part}) gives  
		\begin{equation}\label{eq:ineq_portmanteau}
		\limsup_{n\to\infty} \mathcal{P}((e_n,u_n)\in F\cap K) \leq \mathcal{P}((u_0,u_0)\in F) + \mathcal{P}((u_0,u_0)\notin K).
		\end{equation}

		Now by the choice of $K$ we have $P((u_0,u_0)\notin K)\leq \varepsilon$ and 
		\begin{align*}
		\limsup_n \mathcal{P}((e_n,u_n)\in F\cap K) &\geq \limsup_n \mathcal{P}((e_n,u_n)\in F)-\sup_n \mathcal{P}((e_n,u_n)\notin K) \\
		&\geq \limsup_n \mathcal{P}((e_n,u_n)\in F)-\varepsilon,
		\end{align*}
		so it follows that \begin{equation*}
		\limsup_n \mathcal{P}((e_n,u_n) \in F) \leq P((u_0,u_0) \in F) + 2\varepsilon. 
		\end{equation*}
		Since $\varepsilon$ was arbitrary the portmanteau lemma yields $(e_n,u_n)\Rightarrow (u_0,u_0)$. 
	\end{proof}


	\subsection{Proof of Theorem \ref{thm:main_thm_es}}
	%
	
	
	In this subsection we give the main steps of the proof of Theorem \ref{thm:main_thm_es}. Proofs of intermediate lemmas are either given in the following subsection, or deferred to the technical supplement.  \\
	
	{\bf Step 1} {\sl Increments of the scoring function}\\
	
	We start with a technical lemma on increments of the scoring function. 
	\begin{lemma}\label{lem:dif_s_reformulated}
		a.		We have that
		\begin{align}
		&S(x_1,x_2+y_2; z) - S(x_1,x_2;z) \notag\\
		&=   \big(G(x_2+y_2)- G(x_2) \big)\big( x_2 - x_1 +\alpha^{-1} 1_{ z \leq x_1}\big(x_1-z \big)\big) + \int_{0}^{y_2} G'(x_2+s)s \,\dif s\label{eq:diff_s_1}\\
		&=\big(G(x_2+y_2)- G(x_2) \big) \big( x_2 - x_1 +\alpha^{-1} 1_{ z \leq x_1}\big(x_1-z \big)\big) \notag\\
		&\quad +  \frac{1}{2}G'(x_2+y_2) y_2^2 - \frac{1}{2} \int_{0}^{y_2} G''(x_2+s)s^2 \,\dif s\label{eq:diff_s_2}.
		\end{align}
		b. Setting $\rho_{\alpha}(x_1;z) = \big(1_{z\leq x_1}-\alpha\big)(x_1-z)$ we have that 
		\begin{equation}\label{eq:incrementquant}
		\rho_{\alpha}(x_1+y_1; z) - \rho_{\alpha}(x_1;z) = y_1\big(1_{z \leq x_1}-\alpha\big) + \int_0^{y_1} \big(1_{z\leq x_1+s}-1_{z\leq x_1}\big) \, \dif s.
		\end{equation}
		
		c. Generally, we have that
		\begin{align}
		S&(x_1+y_1, x_2+y_2;z) - S(x_1,x_2;z)\notag\\
		&= \big(1+\alpha^{-1}G(x_2+y_2)\big) \Bigg(y_1\big(1_{z\leq x_1}-\alpha\big) + \int_0^{y_1}1_{z\leq x_1+s}-1_{z\leq x_1} \, \dif s\Bigg)\notag\\[5pt]
		&\quad + \big(G(x_2+y_2)-G(x_2)\big)\big( x_2 - x_1 +\alpha^{-1} 1_{ y \leq x_1}\big(x_1-y \big)\big) \notag\\[5pt]
		&\quad +\frac{1}{2}G'(x_2+y_2)y_2^2 - \frac{1}{2}\int_0^{y_2}G''(x_2+s)s^2\,\dif s\label{eq:difference_s}.
		\end{align}
		
	\end{lemma}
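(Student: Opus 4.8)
The plan is to base all three identities on the alternative representation (\ref{eq:s_rep_2}) of $S$, combined with two elementary integrations by parts and the one-dimensional identity $\max(b,0)-\max(a,0)=\int_a^b 1_{[0,\infty)}(t)\,\dif t$, valid for all real $a,b$ with the convention $\int_a^b=-\int_b^a$ when $a>b$.

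\textbf{Part a.} I would write $\rho:=(1_{z\le x_1}-\alpha)(x_1-z)$, which does not involve $x_2$, so that by (\ref{eq:s_rep_2}) one has $S(x_1,x_2;z)=\rho+\alpha^{-1}G(x_2)\rho+G(x_2)(x_2-z)-\mathcal G(x_2)$. Subtracting the value at $x_2$ from that at $x_2+y_2$ and collecting the coefficient of $G(x_2+y_2)-G(x_2)$, using $\alpha^{-1}\rho=\alpha^{-1}1_{z\le x_1}(x_1-z)-(x_1-z)$ and $(x_2-z)-(x_1-z)=x_2-x_1$, the remaining part reduces to $G(x_2+y_2)\,y_2-\mathcal G(x_2+y_2)+\mathcal G(x_2)$. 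Since $\mathcal G'=G$, integration by parts gives $\int_0^{y_2}G'(x_2+s)\,s\,\dif s=G(x_2+y_2)\,y_2-\int_0^{y_2}G(x_2+s)\,\dif s=G(x_2+y_2)\,y_2-\mathcal G(x_2+y_2)+\mathcal G(x_2)$, which yields (\ref{eq:diff_s_1}). Integrating $\int_0^{y_2}G'(x_2+s)\,s\,\dif s$ by parts once more, now with antiderivative $s^2/2$ of $s$, rewrites it as $\tfrac12 G'(x_2+y_2)\,y_2^2-\tfrac12\int_0^{y_2}G''(x_2+s)\,s^2\,\dif s$, giving (\ref{eq:diff_s_2}); the hypothesis that $\mathcal G$ is three times continuously differentiable is exactly what makes $G''$ continuous, so that the last integral is well defined.

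\textbf{Part b.} Here I would use $1_{z\le x_1}(x_1-z)=\max(x_1-z,0)$ to write $\rho_\alpha(x;z)=\max(x-z,0)-\alpha(x-z)$, hence $\rho_\alpha(x_1+y_1;z)-\rho_\alpha(x_1;z)=\max(x_1+y_1-z,0)-\max(x_1-z,0)-\alpha y_1$. On the other hand, after cancelling the two $y_1\,1_{z\le x_1}$ contributions, the right-hand side of (\ref{eq:incrementquant}) equals $-\alpha y_1+\int_0^{y_1}1_{z\le x_1+s}\,\dif s$. Thus (\ref{eq:incrementquant}) is equivalent to $\max(x_1+y_1-z,0)-\max(x_1-z,0)=\int_0^{y_1}1_{z\le x_1+s}\,\dif s$, which follows from the identity $\max(b,0)-\max(a,0)=\int_a^b 1_{[0,\infty)}(t)\,\dif t$ with $a=x_1-z$, $b=x_1+y_1-z$ and the substitution $t=x_1+s-z$.

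\textbf{Part c.} I would decompose $S(x_1+y_1,x_2+y_2;z)-S(x_1,x_2;z)=\bigl[S(x_1+y_1,x_2+y_2;z)-S(x_1,x_2+y_2;z)\bigr]+\bigl[S(x_1,x_2+y_2;z)-S(x_1,x_2;z)\bigr]$. By (\ref{eq:s_rep_2}) the first bracket equals $\bigl(1+\alpha^{-1}G(x_2+y_2)\bigr)\bigl(\rho_\alpha(x_1+y_1;z)-\rho_\alpha(x_1;z)\bigr)$, and part b turns this into the first line of (\ref{eq:difference_s}); the second bracket is precisely (\ref{eq:diff_s_2}) from part a, supplying the remaining lines, so summing proves (\ref{eq:difference_s}). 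Nothing here is deep; the only points that need care are the bookkeeping in part a (splitting $\alpha^{-1}\rho$ correctly and tracking which $G$-argument multiplies which linear factor) and, in part b, the fact that $x_1\mapsto 1_{z\le x_1}$ is discontinuous, so one must route the computation through $\max(\cdot,0)$ — and mind the orientation of $\int_0^{y_1}$ when $y_1<0$ — rather than differentiating $\rho_\alpha$ naively.
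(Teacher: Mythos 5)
Your proposal is correct and takes essentially the same route as the paper's own proof: both work from the representation (\ref{eq:s_rep_2}), obtain part a by collecting the coefficient of $G(x_2+y_2)-G(x_2)$ followed by two integrations by parts, obtain part b from the fundamental-theorem-of-calculus identity for $x\mapsto\rho_\alpha(x;z)$ (your $\max(\cdot,0)$ formulation), and obtain part c via the same decomposition through the intermediate point $(x_1,x_2+y_2)$, whose first bracket is exactly the paper's term $I)$ and whose second bracket is the part-a increment. No gaps.
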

	
	The proof of Lemma \ref{lem:dif_s_reformulated} is relegated to the technical supplement. \\

	{\bf Step 2} {\sl $\sqrt{n}$-rate of convergence of $\widehat{es}_{n,\alpha}$ or, more generally, $\widetilde{es}_{n, \alpha}$.} \\
	
	The following lemma is proved by checking the assumptions of Theorem \ref{thm:nuisance_rate_of_conv}. 

	\begin{lemma}\label{cor:prep_es_tight}
		Assume $q_{n}$ to be a consistent estimator of $q_{\alpha}$ and [B] to hold, then the sequence $\sqrt{n}(\widetilde{es}_{n, \alpha}-es_{\alpha})$ is tight, where $\widetilde{es}_{n, \alpha}$ is the minimizer of the function \begin{equation*}
		x_2 \mapsto\sum_{i=1}^{n} S(q_{n}, x_2; X_i) = n\mathbb{E}_n[S(q_n,x_2;Y)].
		\end{equation*}
		In particular, if [A] and [B] hold, then $\sqrt{n}\big(\widehat{es}_{n,\alpha}-es_{\alpha}\big)$ is a tight sequence. 
	\end{lemma}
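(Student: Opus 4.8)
The plan is to apply the rate result Theorem~\ref{thm:nuisance_rate_of_conv} with both metric spaces equal to $(\R,|\cdot|)$, with nuisance parameter $\eta = x_1$, $\eta_0 = q_\alpha$, $\eta_n = q_n$, with target parameter $\vartheta = x_2$, $\vartheta_0 = es_\alpha$, $\widehat{\vartheta}_n = \widetilde{es}_{n,\alpha}$, and criterion $m_{\eta,\vartheta}(y) = S(x_1,x_2;y)$. I aim for the exponents $\alpha = 2$ and $\beta = 1$ in (\ref{eq:convrategen1})--(\ref{eq:convrategen2}); then $n^{\nicefrac{\alpha}{(2(\beta-\alpha))}} = n^{-1}$ and $n^{\nicefrac{1}{(2(\alpha-\beta))}} = \sqrt n$, so the conclusion of Theorem~\ref{thm:nuisance_rate_of_conv} reads $\sqrt n\,|\widetilde{es}_{n,\alpha} - es_\alpha| = O^*_{\mathcal P}(1)$, which is the asserted tightness. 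The consistency hypotheses $\eta_n \to \eta_0$ and $\widehat{\vartheta}_n \to \vartheta_0$ in (outer) probability hold by the assumption on $q_n$ and by Proposition~\ref{thm:emp_vers_cons}. The near-minimality hypothesis holds with zero remainder, because by construction $\widetilde{es}_{n,\alpha}$ minimizes $x_2 \mapsto \sum_{i=1}^n S(q_n,x_2;X_i) = n\,\mathbb{E}_n[S(q_n,x_2;Y)]$ exactly, so $\mathbb{E}_n[S(q_n,\widetilde{es}_{n,\alpha};Y)] \le \mathbb{E}_n[S(q_n,es_\alpha;Y)]$. It then remains to check the two analytic conditions.

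For the curvature condition (\ref{eq:convrategen1}) I start from the increment formula (\ref{eq:diff_s_2}) of Lemma~\ref{lem:dif_s_reformulated}a, applied with base point $es_\alpha$ and increment $x_2-es_\alpha$, and take expectations. Abbreviating $A(x_1) = es_\alpha - x_1 + \alpha^{-1}\mathbb{E}\big[1_{Y\le x_1}(x_1-Y)\big]$, this yields
\[
\mathbb{E}\big[S(x_1,x_2;Y) - S(x_1,es_\alpha;Y)\big] = \big(G(x_2)-G(es_\alpha)\big)\,A(x_1) + \tfrac12 G'(x_2)(x_2-es_\alpha)^2 - \tfrac12\int_0^{x_2-es_\alpha} G''(es_\alpha+s)\,s^2\,\dif s .
\]
The key point is that $A(q_\alpha)=0$: by the standing Assumption $F$ is continuous at $q_\alpha$ with $F(q_\alpha)=\alpha$, so by (\ref{eq:lowertailes}) one has $\mathbb{E}[Y\,1_{Y\le q_\alpha}] = \alpha\,es_\alpha$, hence $\alpha^{-1}\mathbb{E}[1_{Y\le q_\alpha}(q_\alpha-Y)] = q_\alpha - es_\alpha$ and $A(q_\alpha)=0$; moreover $A$ is continuous at $q_\alpha$ (continuity of $F$ at $q_\alpha$ and dominated convergence, using $\mathbb{E}|Y|<\infty$), so $|A(x_1)| \le \omega(\varepsilon)$ on $\{|x_1-q_\alpha|\le\varepsilon\}$ with $\omega(\varepsilon)\downarrow 0$. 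Since $\mathcal{G}\in C^3$ and $G'>0$, on a compact neighbourhood of $es_\alpha$ one has $0<c_1\le G'\le c_2$ and $|G''|\le c_3$, so the quadratic term is bounded below by $\tfrac{c_1}{2}(x_2-es_\alpha)^2$, the last integral is $O(|x_2-es_\alpha|^3)$, and $|G(x_2)-G(es_\alpha)|\le c_2|x_2-es_\alpha|$. Choosing $\varepsilon$ small relative to $\delta$ so that $\omega(\varepsilon)$ is negligible against $\delta$, and using that $x_2\mapsto\mathbb{E}[S(x_1,x_2;Y)]$ is strictly decreasing to the left and strictly increasing to the right of its unique minimizer $es_\alpha-A(x_1)$ — which reduces the case of large $|x_2-es_\alpha|$ to the boundary case $|x_2-es_\alpha|=\delta$ — one obtains that the displayed expression is bounded below by a positive multiple of $\delta^2$ whenever $|x_2-es_\alpha|\ge\delta$; this is (\ref{eq:convrategen1}) with $\alpha = 2$.

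For the stochastic equicontinuity condition (\ref{eq:convrategen2}) I again use (\ref{eq:diff_s_2}): the summand $\tfrac12 G'(x_2)(x_2-es_\alpha)^2 - \tfrac12\int_0^{x_2-es_\alpha}G''(es_\alpha+s)s^2\,\dif s$ and the summand $(G(x_2)-G(es_\alpha))(es_\alpha-x_1)$ do not depend on the observation and are therefore annihilated by $\mathbb{E}_n-\mathbb{E}$, leaving
\[
(\mathbb{E}_n-\mathbb{E})\big[S(x_1,x_2;Y)-S(x_1,es_\alpha;Y)\big] = \alpha^{-1}\big(G(x_2)-G(es_\alpha)\big)\,(\mathbb{E}_n-\mathbb{E})\big[1_{Y\le x_1}(x_1-Y)\big].
\]
With $|G(x_2)-G(es_\alpha)|\le c_2\,\delta$ on the relevant range this gives
\[
\mathbb{E}^*\Big[\sup_{|x_1-q_\alpha|\le\varepsilon}\ \sup_{|x_2-es_\alpha|\le\delta}\big|\mathbb{G}_n\big[S(x_1,x_2;Y)-S(x_1,es_\alpha;Y)\big]\big|\Big] \le \frac{c_2\,\delta}{\alpha}\,\mathbb{E}^*\Big[\sup_{|x_1-q_\alpha|\le\varepsilon}\big|\mathbb{G}_n\big[1_{Y\le x_1}(x_1-Y)\big]\big|\Big],
\]
and the last expected supremum is bounded uniformly in $n$: the class $\{\,y\mapsto 1_{y\le x_1}(x_1-y):|x_1-q_\alpha|\le\varepsilon\,\}$ is built from the VC class $\{1_{y\le x_1}\}$ and carries the square-integrable envelope $(|q_\alpha|+\varepsilon+|y|)\,1_{y\le q_\alpha+\varepsilon}$ (square-integrability by Assumption~[B] and finiteness of $q_\alpha$), hence is $F$-Donsker. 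This establishes (\ref{eq:convrategen2}) with $\beta = 1$, and Theorem~\ref{thm:nuisance_rate_of_conv} yields tightness of $\sqrt n(\widetilde{es}_{n,\alpha}-es_\alpha)$. The final ``in particular'' assertion follows by taking $q_n = \widehat{q}_{n,\alpha}$ equal to the empirical quantile, which is consistent for $q_\alpha$ under the Assumption, so that $\widehat{es}_{n,\alpha}=\widetilde{es}_{n,\alpha}$ and the first part applies whenever [A] and [B] hold.

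The step I expect to be the main obstacle is the uniformity in the curvature bound (\ref{eq:convrategen1}). Because we deliberately do not assume $F$ differentiable at $q_\alpha$, the displacement $A(x_1)$ of the population minimizer in the $x_2$-direction is only continuous — not Lipschitz — in $x_1$, so the radius $\varepsilon$ of the $x_1$-neighbourhood must be coupled to $\delta$ (equivalently, one first localizes $\widetilde{es}_{n,\alpha}$ to a shrinking neighbourhood of $es_\alpha$ via Proposition~\ref{thm:emp_vers_cons}) to prevent the sign-indefinite linear term $(G(x_2)-G(es_\alpha))A(x_1)$ from swamping the genuine curvature term $\tfrac{c_1}{2}(x_2-es_\alpha)^2$; carrying this through uniformly over the dyadic shells $\{\,2^{j-1}\delta < |x_2-es_\alpha|\le 2^j\delta\,\}$ used in the proof of Theorem~\ref{thm:nuisance_rate_of_conv} is the real content of the verification.
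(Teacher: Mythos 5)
Your proposal follows the same architecture as the paper's proof: Theorem \ref{thm:nuisance_rate_of_conv} with $\alpha=2$, $\beta=1$, criterion $m_{\eta,\vartheta}(y)=S(\eta,\vartheta;y)$, consistency from Proposition \ref{thm:emp_vers_cons}, exact minimization, the increment formulas of Lemma \ref{lem:dif_s_reformulated}, and for (\ref{eq:convrategen2}) the observation that centring leaves only $\alpha^{-1}\big(G(x_2)-G(es_\alpha)\big)\,1_{Y\le x_1}(x_1-Y)$, followed by a maximal inequality (you argue via a VC-type class with square-integrable envelope where the paper uses a bracketing bound; both are fine, though ``Donsker, hence the expected supremum is bounded'' should be backed by an explicit maximal inequality rather than Donskerness alone). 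The genuine gap is in (\ref{eq:convrategen1}): what you establish is a curvature bound valid only after coupling $\varepsilon$ to $\delta$ (``choosing $\varepsilon$ small relative to $\delta$''), and you yourself defer the uniformity over the dyadic shells. But Theorem \ref{thm:nuisance_rate_of_conv} uses (\ref{eq:convrategen1}) in exactly the opposite regime: in its proof the nuisance radius is frozen at $\varepsilon=\rho$ while $\delta=2^{j-1}/r_n$ runs down to order $n^{-1/2}$, so a bound valid only for $\delta\gtrsim\omega(\varepsilon)$, with $\omega(\varepsilon)=\sup_{|x_1-q_\alpha|\le\varepsilon}A(x_1)$, does not verify the hypothesis, and your suggested remedy (localizing $\widetilde{es}_{n,\alpha}$, i.e.\ the $\vartheta$-variable) does not help, since the failure occurs at arbitrarily small $|x_2-es_\alpha|$. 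Moreover no uncoupled bound exists: for $x_2<es_\alpha$ the map $x_2\mapsto\mathbb{E}[S(x_1,x_2;Y)]$ is minimized at $es_\alpha-A(x_1)$, so whenever $\delta\le A(x_1)$ for some admissible $x_1$ the double infimum in (\ref{eq:convrategen1}) is negative (e.g.\ $Y\sim U[0,1]$, $\alpha=1/2$, where $A(q_\alpha+\varepsilon)=\varepsilon^2$). Since this is the step that carries the whole lemma, the proposal as written does not complete the proof.

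In fairness, the point where you stop is precisely the point the paper's own verification glosses over: in its bound for (\ref{eq:convrategen1}) the cross term $\big(G(es_\alpha+\varepsilon_2)-G(es_\alpha)\big)\,\mathbb{E}\big[es_\alpha-\eta+\alpha^{-1}1_{Y\le\eta}(\eta-Y)\big]$ is dropped using convexity and $A(\eta)\ge A(q_\alpha)=0$, which is legitimate only for $\varepsilon_2\ge0$; for $\varepsilon_2<0$ this term is nonpositive and of size $|\varepsilon_2|\,\omega(\varepsilon)$, which is exactly the obstruction you identify. A correct argument has to use more than bare consistency of $q_n$, namely control of the drift $A(q_n)=\int_{q_\alpha}^{q_n}\big(\alpha-F(t)\big)\,\dif t$ at rate $O_{\mathcal{P}}(n^{-1/2})$; this holds for the empirical quantile because $|F(\widehat q_{n,\alpha})-\alpha|=O_{\mathcal{P}}(n^{-1/2})$, but it can fail for a slowly converging consistent choice such as the deterministic $q_n=q_\alpha+n^{-1/100}$ with $Y$ uniform, for which $\sqrt n(\widetilde{es}_{n,\alpha}-es_\alpha)$ is in fact not tight. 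So to close the argument you should either shrink the nuisance ball at an $n$-dependent radius tied to $|F(q_n)-\alpha|$ and rerun the shelling argument around the shifted population minimizer $es_\alpha-A(q_n)$, or prove and use a modified version of Theorem \ref{thm:nuisance_rate_of_conv} whose curvature hypothesis incorporates the $\delta$--$\varepsilon$ coupling; invoking the theorem as stated, as both you and the paper do, leaves the key uniformity unproven.
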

	
	{\bf Step 3} {\sl Convergence of processes to be minimized}\\
	
	Using (\ref{eq:difference_s}) with $x_1=q_{\alpha}$, $y_1=u_1/a_n$, $x_2=es_{\alpha}$ and $y_2= u_2/\sqrt{n}$, where $a_n>0$, we may write
	\begin{align}\label{processrepres}
	\begin{split}
	&\sum_{i=1}^{n} S\big(q_{\alpha}+u_1/a_n, es_{\alpha}+u_2/\sqrt{n}; X_i)-S(q_{\alpha}, es_{\alpha}; X_i)\\
	=\, & \, \big(1+\alpha^{-1}G(es_{\alpha}+u_2/\sqrt{n})\big)V_n(u_1) + U_n(u_2),
	\end{split}
	\end{align}
	where 
	\begin{align}
	V_n(u_1) & = \frac{u_1}{a_n}\sum_{i=1}^{n}(1_{X_i\leq q_{\alpha}}-\alpha)+ \frac{1}{a_n}\int_0^{u_1}\Big(\sum_{i=1}^{n}1_{X_i\leq q_{\alpha}+t/a_n}-1_{X_i\leq q_{\alpha}}\Big) \, \dif t\notag \\
	& = \sum_{i=1}^n \rho_{\alpha}\big(q_{\alpha}+u_1/a_n;X_i\big)-\rho_{\alpha}\big(q_{\alpha};X_i \big) \label{eq:provessvn}
	\end{align}
	and
	\begin{align}
	U_n(u_2) & = \sqrt{n}\big(G(es_{\alpha}+u_2/\sqrt{n})-G(es_{\alpha})\big) \frac{1}{\sqrt{n}}\sum_{i=1}^{n}\big(es_{\alpha}-q_{\alpha}+\alpha^{-1}1_{X_i\leq q_{\alpha}}(q_{\alpha}-X_i)\big) \notag\\
	&\quad +\frac{u_2^2}{2}G'(es_{\alpha}+u_2/\sqrt{n})- \frac{1}{2\sqrt{n}}\int_0^{u_2} G''(es_{\alpha}+t/\sqrt{n})t^2\,\dif t\\
	& = \sum_{i=1}^{n} S(q_{\alpha},es_{\alpha}+\nicefrac{u_2}{\sqrt{n}};X_i) - S(q_{\alpha},es_{\alpha};X_i). \label{eq:provessun}
	\end{align}
	Here we used (\ref{eq:incrementquant}) and (\ref{eq:diff_s_2}) and made a substitution in the integrals. 
	
	Under the Assumptions [A] and [B] the processes $U_n$ and the rescaled processes $\big(\frac{a_n}{\sqrt{n}}\big)\,V_n$ converge in distribution.

	\begin{lemma}\label{thm:conv_to_V_U}
		%
		If [A] holds, then 
		\begin{equation}\label{eq:processconvlem1}
		\frac{a_n}{\sqrt{n}}\,V_n(u_1) \Rightarrow u_1 W_1 + \int_0^{u_1} \psi_{\alpha}(t)\,\dif t =:V(u_1)
		\end{equation}
		in $\ell^{\infty}(K_1)$ for every compact set $K_1\subset \R$, where $W_1\sim\mathcal{N}(0,\alpha(1-\alpha))$. 
		
		If [B] holds we have the convergence 
		\begin{equation}\label{eq:processconvlem2}
		U_n(u_2)\Rightarrow G'(es_{\alpha})\, \big(\, u_2\, W_2 + u_2^2\,  /2\big) =:U(u_2)
		\end{equation} 
		in $\ell^{\infty}(K_2)$ for every compact set $K_2\subset\R$ and  	
		$ W_2\sim\mathcal{N}\big(0, \Var \big[1_{Y \leq q_\alpha}\, (q_\alpha - Y)/\alpha \big]\big)
		$. \\
		Moreover, if both [A] and [B] hold we have that 
		\begin{equation}\label{eq:processconvlem3}
		\big((a_n/\sqrt{n}) V_n, U_n\big) \Rightarrow (V,U)
		\end{equation}
		in $\ell^{\infty}(K)$ for compact $K\subset \R^2$, where $(W_1,W_2)$ in the definition of $(V,U)$ are jointly normal with covariance $(1-\alpha)\, (q_\alpha - es_\alpha)$. 
	\end{lemma}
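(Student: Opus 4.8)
The plan is to treat the three convergence statements in sequence, with the first two being essentially marginal computations and the third the genuinely delicate part. For \eqref{eq:processconvlem1}, I would split $V_n(u_1)$ according to the two terms in \eqref{eq:provessvn}. The first term, $\frac{u_1}{a_n}\sum_{i=1}^n(1_{X_i\le q_\alpha}-\alpha)$, becomes $\frac{a_n}{\sqrt n}\cdot\frac{u_1}{a_n}\cdot\sqrt n(\mathbb E_n-\mathbb E)[1_{Y\le q_\alpha}] \Rightarrow u_1 W_1$ after multiplying by $a_n/\sqrt n$, by the CLT with $W_1\sim\mathcal N(0,\alpha(1-\alpha))$ (noting $\mathbb E[1_{Y\le q_\alpha}]=\alpha$). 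For the integral term, multiplying by $a_n/\sqrt n$ gives $\frac{1}{\sqrt n}\int_0^{u_1}\sum_i(1_{X_i\le q_\alpha+t/a_n}-1_{X_i\le q_\alpha})\,\dif t$; I would write the inner sum as $n\,\mathbb E_n[\cdots] = n\mathbb E[\cdots] + \sqrt n\,\mathbb G_n[\cdots]$, so the deterministic part contributes $\int_0^{u_1}\sqrt n(F(q_\alpha+t/a_n)-F(q_\alpha))\,\dif t \to \int_0^{u_1}\psi_\alpha(t)\,\dif t$ by Assumption~[A] (dominated convergence, using that $\psi_\alpha$ is monotone hence the convergence is locally uniform off the possible blow-up at $0$, and the integral is finite on compacts). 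The stochastic remainder $\frac{1}{\sqrt n}\int_0^{u_1}\mathbb G_n[\cdots]\,\dif t$ is $o_{\mathcal P}(1)$ uniformly on $K_1$ because the $L^2$-norm of $\mathbb G_n[1_{Y\le q_\alpha+t/a_n}-1_{Y\le q_\alpha}]$ is bounded by $\sqrt{F(q_\alpha+t/a_n)-F(q_\alpha)}\to 0$; uniformity follows since the integrand is monotone in $t$. This also establishes the claimed process convergence in $\ell^\infty(K_1)$, the limit $V$ being (a.s.) continuous and the finite-dimensional convergence plus monotonicity-type equicontinuity giving tightness.

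For \eqref{eq:processconvlem2} I would use the explicit form of $U_n(u_2)$ in \eqref{eq:provessun}. The first summand is $\sqrt n(G(es_\alpha+u_2/\sqrt n)-G(es_\alpha))\cdot\frac{1}{\sqrt n}\sum_i(es_\alpha-q_\alpha+\alpha^{-1}1_{X_i\le q_\alpha}(q_\alpha-X_i))$. Now $\sqrt n(G(es_\alpha+u_2/\sqrt n)-G(es_\alpha))\to G'(es_\alpha)u_2$ by differentiability of $G$, uniformly on $K_2$; and by \eqref{eq:esscoringexpl}/the first-order condition the random variable $es_\alpha-q_\alpha+\alpha^{-1}1_{Y\le q_\alpha}(q_\alpha-Y)$ has mean zero (this is exactly the derivative condition defining $es_\alpha$), so $\frac{1}{\sqrt n}\sum_i(\cdots)\Rightarrow W_2\sim\mathcal N(0,\Var[1_{Y\le q_\alpha}(q_\alpha-Y)/\alpha])$ by the CLT, which needs $\mathbb E[1_{Y\le 0}Y^2]<\infty$, i.e.\ Assumption~[B]. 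The remaining two summands: $\frac{u_2^2}{2}G'(es_\alpha+u_2/\sqrt n)\to \frac{u_2^2}{2}G'(es_\alpha)$ uniformly on $K_2$ by continuity of $G'$, and $\frac{1}{2\sqrt n}\int_0^{u_2}G''(es_\alpha+t/\sqrt n)t^2\,\dif t = O(1/\sqrt n)$ uniformly on $K_2$ since $G''$ is continuous hence bounded near $es_\alpha$. Slutsky and the continuous mapping theorem give \eqref{eq:processconvlem2} in $\ell^\infty(K_2)$.

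The main obstacle is the joint convergence \eqref{eq:processconvlem3}. Since the two processes live at different rates, the natural route is to show joint convergence of the finite-dimensional distributions together with joint asymptotic tightness (then apply Theorem~1.5.4 of \citet{vdvwellner1996weak} as already invoked elsewhere in the paper). Tightness of the pair follows from the marginal tightness established above via Lemma~1.4.3 of \citet{vdvwellner1996weak}. For the finite-dimensional distributions, after peeling off the deterministic and negligible pieces as above, both $\frac{a_n}{\sqrt n}V_n$ and $U_n$ are, up to $o_{\mathcal P}(1)$, linear images of the single bivariate empirical-process vector $\big(\sqrt n(\mathbb E_n-\mathbb E)[1_{Y\le q_\alpha}],\ \sqrt n(\mathbb E_n-\mathbb E)[es_\alpha-q_\alpha+\alpha^{-1}1_{Y\le q_\alpha}(q_\alpha-Y)]\big)$ evaluated at fixed arguments (the integral term in $V_n$ contributing only through its deterministic limit), so the multivariate CLT applies to this two-dimensional vector of mean-zero summands. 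The cross-covariance is $\Cov\big(1_{Y\le q_\alpha},\ \alpha^{-1}1_{Y\le q_\alpha}(q_\alpha-Y)\big)$; a direct computation (using $\mathbb E[1_{Y\le q_\alpha}]=\alpha$ and $\mathbb E[1_{Y\le q_\alpha}Y]=\alpha\,es_\alpha$) gives $\alpha^{-1}\big(q_\alpha\alpha - \alpha\,es_\alpha\big) - \alpha\cdot\alpha^{-1}(q_\alpha-es_\alpha) = q_\alpha - es_\alpha - \alpha(q_\alpha-es_\alpha) = (1-\alpha)(q_\alpha-es_\alpha)$, matching the stated covariance of $(W_1,W_2)$. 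Combining the joint fidi convergence with joint tightness yields $\big((a_n/\sqrt n)V_n,U_n\big)\Rightarrow(V,U)$ in $\ell^\infty(K)$ for every compact $K\subset\R^2$, with $(W_1,W_2)$ the bivariate normal vector just identified. The one subtlety to watch is that the $o_{\mathcal P}(1)$ remainders in both processes must be shown negligible \emph{uniformly} on the compact $K$, not just pointwise — this reduces to the uniform estimates already noted, monotonicity in the $V_n$ integral term and boundedness of $G',G''$ near $es_\alpha$ for the $U_n$ terms.
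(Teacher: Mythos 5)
Your proposal is correct and arrives at the same two expansions the paper works with -- the representation of $(a_n/\sqrt n)V_n(u_1)$ as $\tfrac{u_1}{\sqrt n}\sum_i(1_{X_i\le q_\alpha}-\alpha)+\int_0^{u_1}\sqrt n\big(F(q_\alpha+t/a_n)-F(q_\alpha)\big)\,\dif t$ plus a negligible remainder, and of $U_n(u_2)$ as a linear term in the i.i.d.\ average of $es_\alpha-q_\alpha+\alpha^{-1}1_{Y\le q_\alpha}(q_\alpha-Y)$ plus $G'(es_\alpha)u_2^2/2$ -- but you control the remainders by a different device. The paper establishes Lipschitz continuity of $\rho_\alpha(\cdot;y)$ and of $x_2\mapsto S(q_\alpha,x_2;y)$ (with envelope $m(y)$, square-integrable under [B]) and invokes Lemma~19.31 of \citet{vdv2000asympstat} for both processes; you instead (i) kill the $V_n$-remainder by a direct second-moment bound, the $L^2$-norm of $\mathbb G_n[1_{Y\le q_\alpha+t/a_n}-1_{Y\le q_\alpha}]$ being at most $\sqrt{|F(q_\alpha+t/a_n)-F(q_\alpha)|}\to 0$, integrated over the compact set, and (ii) for $U_n$ observe that the first display in (\ref{eq:provessun}) is an \emph{exact} identity of product form (a deterministic function of $u_2$ times one fixed centered average), so Slutsky plus uniform convergence of $\sqrt n\big(G(es_\alpha+u_2/\sqrt n)-G(es_\alpha)\big)$ on compacts suffices and no empirical-process argument is needed at all -- arguably the shorter route for $U_n$. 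Your joint step (joint fidis from the bivariate CLT for $\big(1_{Y\le q_\alpha}-\alpha,\ \alpha^{-1}1_{Y\le q_\alpha}(q_\alpha-Y)-(q_\alpha-es_\alpha)\big)$, joint tightness from the marginals via Lemma~1.4.3 and Theorem~1.5.4 of \citet{vdvwellner1996weak}) is exactly the argument the paper leaves implicit, and your cross-covariance $(1-\alpha)(q_\alpha-es_\alpha)$ matches.

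Two small repairs. First, a bookkeeping slip: after writing the inner sum as $n\mathbb E[\cdot]+\sqrt n\,\mathbb G_n[\cdot]$, the stochastic remainder of $(a_n/\sqrt n)V_n$ is $\int_0^{u_1}\mathbb G_n[1_{Y\le q_\alpha+t/a_n}-1_{Y\le q_\alpha}]\,\dif t$ \emph{without} the extra $1/\sqrt n$ you carry along; your variance bound is precisely what this correctly normalized term requires (a crude $O_{\mathcal P}(n^{-1/2})$ argument would not do), and taking the supremum over $u_1$ in a compact inside the integral gives the needed uniformity. Second, the parenthetical claim that $\int_0^{u_1}\psi_\alpha(t)\,\dif t$ is finite on compacts is not always true: by Proposition~\ref{prop:theassumptionA} the function $\psi_\alpha$ may equal $\pm\infty$ on half-lines (there is no blow-up at $0$), in which case the limit integral is $+\infty$ for some $u_1$; the paper treats this case separately, using monotonicity of $t\mapsto\sqrt n\big(F(q_\alpha+t/a_n)-F(q_\alpha)\big)$ and the fact that $\{\psi_\alpha=\infty\}$ and $\{\psi_\alpha=-\infty\}$ are open intervals, and your argument should be supplemented by the same remark; on compacts where the limit is finite, pointwise convergence of these convex (in $u_1$) integrals is indeed automatically uniform, as you indicate.
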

	
	{\bf Step 4} {\sl Approximation of $\sqrt{n}(\widehat{es}_{n,\alpha}-es_{\alpha})$ by minimizer of $U_n(u_2)$.} \\
	
	\begin{lemma}\label{lem:approxminimizer}
		The processes $(U_n)$ and $U$ in Lemma~\ref{thm:conv_to_V_U} have unique minimizers $({u}_{2,n})$ and $u_{2}^0$, and ${u}_{2,n} \Rightarrow u_{2}^0$. Moreover, we have that
		\begin{equation}\label{eq:minim_op_one}
		\sqrt{n}(\widehat{es}_{n,\alpha}-es_{\alpha})  = {u}_{2,n}+o_{\mathcal{P}}(1).
		\end{equation}
	\end{lemma}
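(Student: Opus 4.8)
The plan is to invoke Lemma~\ref{lem:diff_between_minimizer} with $k=1$, applied to the processes
\[
M_n(u_2) = \sum_{i=1}^n\Big(S\big(\widehat q_{n,\alpha}, es_\alpha + u_2/\sqrt n; X_i\big) - S\big(\widehat q_{n,\alpha}, es_\alpha; X_i\big)\Big), \qquad M_n'(u_2) = U_n(u_2),
\]
taking $N_n = U_n$, $R_n \equiv 0$ and $N = U$ as in Lemma~\ref{thm:conv_to_V_U}. By Proposition~\ref{lem:connect_to_emp_vers}, $\widehat{es}_{n,\alpha}$ minimizes $x_2 \mapsto \sum_i S(\widehat q_{n,\alpha},x_2;X_i)$, so the minimizer of $M_n$ is $e_n = \sqrt n(\widehat{es}_{n,\alpha}-es_\alpha)$; and since by (\ref{eq:provessun}) the process $U_n(u_2)$ equals $\sum_i S(q_\alpha, es_\alpha + u_2/\sqrt n; X_i)$ plus a term not depending on $u_2$, the minimizer of $M_n'$ is $u_{2,n} = \sqrt n(\widetilde{es}_{n,\alpha}-es_\alpha)$, where $\widetilde{es}_{n,\alpha} = \argmin_{x_2}\sum_i S(q_\alpha,x_2;X_i)$ is the estimator of Proposition~\ref{thm:emp_vers_cons} formed with the (trivially consistent) deterministic choice $q_n\equiv q_\alpha$. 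Differentiating in $u_2$ exactly as in the proof of Proposition~\ref{lem:connect_to_emp_vers}, the derivatives of both $M_n$ and $U_n$ factor as $G'(\,\cdot\,)$ times a strictly increasing affine function of $u_2$; since $G'>0$, each has a unique minimizer. Finally $U(u_2) = G'(es_\alpha)\big(u_2 W_2 + u_2^2/2\big)$ is a strictly convex quadratic, hence its unique minimizer is $u_2^0 = -W_2$.

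I would then check the remaining hypotheses of Lemma~\ref{lem:diff_between_minimizer}. The tightness $e_n = O_{\mathcal{P}}(1)$ is the last assertion of Lemma~\ref{cor:prep_es_tight}, and applying that lemma with $q_n\equiv q_\alpha$ gives $u_{2,n}=O_{\mathcal{P}}(1)$ as well. Combined with $U_n\Rightarrow U$ in $\ell^\infty(K_2)$ for every compact $K_2$ (Lemma~\ref{thm:conv_to_V_U}), the continuity of the paths of $U$ and the uniqueness of its minimizer, the argmax--continuous-mapping theorem (\citet{vdv2000asympstat}, Corollary~5.58) yields $u_{2,n}\Rightarrow u_2^0$, which proves the first claim of the lemma.

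The crux is the uniform approximation (\ref{eq:argmaxunifapprox}), i.e.\ $\sup_{u_2\in K}|M_n(u_2)-U_n(u_2)| = o_{\mathcal{P}}(1)$ on compacta $K$. Inserting $x_1=\widehat q_{n,\alpha}$, $x_2=es_\alpha$, $y_2=u_2/\sqrt n$ into the expansion (\ref{eq:diff_s_2}) and comparing termwise with (\ref{eq:provessun}), the quadratic term $\tfrac{u_2^2}{2}G'(es_\alpha+u_2/\sqrt n)$ and the remainder integral cancel, leaving
\[
M_n(u_2)-U_n(u_2) = \big(G(es_\alpha+u_2/\sqrt n)-G(es_\alpha)\big)\,\Delta_n,
\]
where $\Delta_n = \sum_i\big[\big(es_\alpha-\widehat q_{n,\alpha}+\alpha^{-1}1_{X_i\le\widehat q_{n,\alpha}}(\widehat q_{n,\alpha}-X_i)\big) - \big(es_\alpha-q_\alpha+\alpha^{-1}1_{X_i\le q_\alpha}(q_\alpha-X_i)\big)\big]$. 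Writing $1_{X_i\le x_1}(x_1-X_i) = \rho_\alpha(x_1;X_i)+\alpha(x_1-X_i)$, the terms linear in the quantile telescope and one obtains the identity $\Delta_n = \alpha^{-1}V_n(u_{1,n})$ with $u_{1,n} = a_n(\widehat q_{n,\alpha}-q_\alpha)$ and $V_n$ as in (\ref{eq:provessvn}). Since $G$ is continuously differentiable, $\sup_{u_2\in K}|G(es_\alpha+u_2/\sqrt n)-G(es_\alpha)|\le C_K/\sqrt n$, so it suffices to show $\tfrac1{\sqrt n}V_n(u_{1,n}) = o_{\mathcal{P}}(1)$. Now $V_n(u_{1,n}) = \sum_i\rho_\alpha(\widehat q_{n,\alpha};X_i)-\sum_i\rho_\alpha(q_\alpha;X_i)\le 0$, since $\widehat q_{n,\alpha}$ minimizes $x_1\mapsto\sum_i\rho_\alpha(x_1;X_i)$; while convexity of $\rho_\alpha(\,\cdot\,;y)$ bounds this difference below by $(\widehat q_{n,\alpha}-q_\alpha)$ times a subgradient of $\sum_i\rho_\alpha(\,\cdot\,;X_i)$ at $q_\alpha$, namely $\sum_i(1_{X_i\le q_\alpha}-\alpha) = O_{\mathcal{P}}(\sqrt n)$ by the central limit theorem and $F(q_\alpha)=\alpha$. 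Together with $\widehat q_{n,\alpha}-q_\alpha = o_{\mathcal{P}}(1)$ this gives $V_n(u_{1,n}) = o_{\mathcal{P}}(\sqrt n)$. (Alternatively, $\tfrac{a_n}{\sqrt n}V_n(u_{1,n}) = O_{\mathcal{P}}(1)$ by the classical argmax analysis of the empirical quantile under [A], whence $\tfrac1{\sqrt n}V_n(u_{1,n}) = O_{\mathcal{P}}(a_n^{-1}) = o_{\mathcal{P}}(1)$.) Lemma~\ref{lem:diff_between_minimizer} then delivers $e_n = u_{2,n} + o_{\mathcal{P}}(1)$, i.e.\ (\ref{eq:minim_op_one}).

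The step I expect to be the main obstacle is the cancellation identity $\Delta_n = \alpha^{-1}V_n(u_{1,n})$ together with the ensuing bound $\tfrac1{\sqrt n}V_n(u_{1,n}) = o_{\mathcal{P}}(1)$: this is exactly where one must show that the slow, possibly non-standard rate $a_n$ of $\widehat q_{n,\alpha}$ does not contaminate the $\sqrt n$-scale fluctuation of the expected shortfall, and the key idea is to control the plug-in error of $q_\alpha\mapsto\widehat q_{n,\alpha}$ through the (necessarily small) drop in the pinball objective rather than through any rate for $\widehat q_{n,\alpha}$ itself.
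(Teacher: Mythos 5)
Your proof is correct and follows the paper's skeleton: you invoke Lemma \ref{lem:diff_between_minimizer} with $N_n=U_n$, and after the cancellation of the quadratic and remainder terms your difference process is exactly the paper's, namely $\alpha^{-1}V_n\big(a_n(\widehat q_{n,\alpha}-q_\alpha)\big)\big(G(es_\alpha+u_2/\sqrt n)-G(es_\alpha)\big)$; your $M_n$ and $M_n'$ differ from the paper's choices only by the additive constant $R_n=\big(1+\alpha^{-1}G(es_\alpha)\big)V_n\big(a_n(\widehat q_{n,\alpha}-q_\alpha)\big)$, which affects neither minimizers nor the approximation condition (\ref{eq:argmaxunifapprox}). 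The genuine differences lie in two sub-steps. For $u_{2,n}\Rightarrow u_2^0$ the paper simply writes the minimizer of $U_n$ in closed form, $u_{2,n}=\sqrt n\big(\alpha^{-1}\mathbb{E}_n[1_{Y\leq q_\alpha}(Y-q_\alpha)]-es_\alpha+q_\alpha\big)$, and applies the central limit theorem, while you route it through Lemma \ref{cor:prep_es_tight} with $q_n\equiv q_\alpha$ plus the argmax theorem; both work, the paper's is shorter. More substantially, for the key bound $V_n\big(a_n(\widehat q_{n,\alpha}-q_\alpha)\big)=o_{\mathcal{P}}(\sqrt n)$ the paper proves the stronger $O_{\mathcal{P}}(\sqrt n/a_n)$ by first establishing tightness of $a_n(\widehat q_{n,\alpha}-q_\alpha)$ under [A] (via (\ref{eq:processconvlem1}), convexity and Lemma 2.2 of \citet{davis1992mestimation}) and then using continuity of the infimum functional on compacts, whereas you sandwich $V_n$ at the empirical quantile between $0$ and the subgradient bound $(\widehat q_{n,\alpha}-q_\alpha)\sum_i(1_{X_i\leq q_\alpha}-\alpha)$, using only consistency of $\widehat q_{n,\alpha}$ and the CLT (with $F(q_\alpha)=\alpha$ from the standing assumption; taking the right derivative as the subgradient makes possible ties at $q_\alpha$ irrelevant). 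Your version is more elementary and does not use Assumption [A] at this point; the paper's yields the sharper rate $O_{\mathcal{P}}(\sqrt n/a_n)$, of which only $o_{\mathcal{P}}(\sqrt n)$ is actually needed.
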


	{\bf Step 5} {\sl Application of the argmax continuity theorem.} \\
	
	From Lemma \ref{lem:approxminimizer}, we have that 
	\[ (a_n(\widehat{q}_{n,\alpha}-q_{\alpha}), \sqrt{n}(\widehat{es}_{n,\alpha}-es_{\alpha})\big) = \big(a_n(\widehat{q}_{n,\alpha}-q_{\alpha}), u_{2,n}\big) + o_{\mathcal{P}}(1).\]
	Now, $\big(a_n(\widehat{q}_{n,\alpha}-q_{\alpha}), u_{2,n}\big)$ is by construction a sequence of minimizers of the processes 
	$ \big(V_n(u_1) + U_n(u_2)\big)$, 
	but since the variables are separated, also of the processes
	\[	Z_n(u_1,u_2)\\
	=\big(1+\alpha^{-1}G(es_{\alpha})\big)\, \frac{a_n}{\sqrt{n}}\, V_n(u_1) + U_n(u_2),
	\]
	which, by Lemma \ref{thm:conv_to_V_U}, (\ref{eq:processconvlem3}), and the continuous mapping theorem, converge in $\ell^{\infty}(K)$ for compact $K$ to the process 
	\begin{equation*}
	Z(u_1,u_2)=(1+\alpha^{-1}G(es_{\alpha}))\, V(u_1) + U(u_2).
	\end{equation*}
	To conclude$\big(a_n(\widehat{q}_{n,\alpha}-q_{\alpha}), u_{2,n}\big) \Rightarrow (z_1,z_2)$, the minimizer of $Z$, we apply the argmax-continuity theorem, e.g.~Corollary~5.58, \citet{vdv2000asympstat}, and need to check the remaining assumptions. 
	
	The process $U$ apparently has a unique minimizer, and $\sqrt{n}(\widehat{es}_{n,\alpha}-es_{\alpha})$ is a tight sequence by Lemma \ref{cor:prep_es_tight}. 
	
	The process $V$ also has a unique minimum almost surely. Indeed, the form of the functions $\psi_{\alpha}(t)$ as given in Proposition \ref{prop:theassumptionA}, in particular that $\kappa_+, \kappa_-, \beta >0$, as well as the form of $V$ in (\ref{eq:processconvlem1}) imply that 
	$\lim_{u_1\to\pm\infty} V(u_1)=\infty$ and that for the closed interval for which $|V(u_1)|< \infty$ the derivative has at most one zero; if it has no zero the minimizer is on the boundary of this interval. 
	Moreover, in the proof of Lemma \ref{lem:approxminimizer} we already observed that $a_n(\widehat{q}_{n,\alpha}-q_{\alpha})$ is a tight sequence. This concludes the proof of Theorem \ref{thm:main_thm_es}.

	
	\subsubsection{Proofs of intermediate results}

	\begin{proof}[{\sl Proof of Lemma \ref{cor:prep_es_tight}}]
		
		The proof proceeds by checking the assumptions of Theorem~\ref{thm:nuisance_rate_of_conv} with $\alpha=2$, $\beta=1$, $d_0, d_1$ the Euclidean distance in $\R$ and the criterion function $m_{\eta, \vartheta}(y) = S(\eta, \vartheta; y)$. We give an outline here, full details are provided in the technical supplement. 
		
		Consistency has been taken care of in Theorem~\ref{thm:emp_vers_cons}.
		
		Concerning (\ref{eq:convrategen1}) we need to prove that 
		\begin{equation}\label{eq:helprate1tech2}
		\inf_{|\eta-q_{\alpha}|\leq \varepsilon} \inf_{\delta_0\geq |\vartheta-es_{\alpha}|\geq \delta} \mathbb{E}\left[S(\eta, \vartheta; Y)-S(\eta, es_{\alpha};Y)\right] \geq C\,\delta^2,
		\end{equation}
		$0 < \varepsilon \leq c_0 $, $0<\delta \leq \delta_0$ for some $C, \delta_0, c_0>0$.

		To this end, using (\ref{eq:diff_s_1}) in Lemma~\ref{lem:dif_s_reformulated}, we get by convexity of  $\eta\mapsto \mathbb{E}\big(\big(1_{Y\leq \eta}-\alpha\big)\big(\eta-Y\big)\big)$
		that 		
		\begin{align*}
		\inf_{|\eta-q_{\alpha}|\leq \varepsilon} \inf_{\delta_0\geq |\varepsilon_2|\geq \delta} \mathbb{E}\left[S(\eta, es_{\alpha}+\varepsilon_2;Y)-S(\eta, es_{\alpha};Y)\right]
		\geq \frac{\delta^2}{2}\Big(G'(es_{\alpha} \pm \delta) - \delta \,C/3\Big)
		\end{align*}
		for $C=\sup_{x_2\in [es_{\alpha}-\delta_0,es_{\alpha}+\delta_0 ]} |G''(x_2)|<\infty$. Since $\lim_{\delta \to 0} G'(es_{\alpha}\pm \delta) - \delta \,C/3 = G'(es_{\alpha}) >0$ holds by assumption on $G'$ we can find $C'>0$ and $\delta_0>0$ with $G'(es_{\alpha}\pm \delta) - \delta \,C/3 \geq C'$ for every $\delta \leq \delta_0$. This proves (\ref{eq:helprate1tech2}). 
		
		For (\ref{eq:convrategen2}) we require  
		\begin{equation}\label{eq:helprate1tech}
		\mathbb{E}\Big[ \sup_{ \substack{\scriptscriptstyle |\eta-q_{\alpha}|\leq \varepsilon\\\scriptscriptstyle |\vartheta-es_{\alpha}|\leq \delta}} \Big|\sqrt{n}(\mathbb{E}_n-\mathbb{E})\big( S(\eta, \vartheta;Y)-S(\eta, es_{\alpha};Y)\big)\Big| \Big] \leq C \,\delta, \qquad 0 < \delta \leq \delta_0,
		\end{equation}
		for some $\delta_0, C >0$. 	

		Proving (\ref{eq:diff_s_1}) reduces to showing that 
		\begin{equation}\label{eq:expect_sup_bound}
		\mathbb{E}\left[\sup_{|\eta-q_{\alpha}|\leq \varepsilon}\big|\sqrt{n}(\mathbb{E}_n-\mathbb{E})(1_{Y\leq \eta}(\eta-Y))\big|\right] \leq K
		\end{equation}
		for some constant $K$ not depending on $\delta$, which may be accomplished by using a maximal inequality involving the bracketing integral (Definition in Chapter~19.2, \citet{vdv2000asympstat}).  
	\end{proof}

	\begin{proof}[Proof of Lemma~\ref{thm:conv_to_V_U}.]
		
		Assume [A]. In fact, the convergence (\ref{eq:processconvlem1}) was shown in \citet{Knight2002LimDistr}. For convenience, we give a (different) proof here. 
		First we observe that
		\begin{equation}\label{eq:l_cont_of_rho}
		\big|\rho_{\alpha}(x_1;y) - \rho_{\alpha}(x'_1;y)\big|\leq (1+\alpha)\big|x_1-x_1'\big|.
		\end{equation}
		Indeed, if $x_1\leq x_1' <y$, then $\big|\rho_{\alpha}(x_1;y) - \rho_{\alpha}(x'_1;y)\big| = \alpha(x_1'-x_1)$ is satisfied. Else, if $x_1 <y \leq x_1'$, then \begin{align*}
		\big|\rho_{\alpha}(x_1;y) - \rho_{\alpha}(x'_1;y)\big|  &= \big|-\alpha\,x_1-x_1'+\alpha\,x_1'+y\big| \leq \alpha(x_1'-x_1) + \big( x_1'-y\big)\\
		&\leq \alpha(x_1'-x_1) + \big( x_1'-x_1\big) = (1+\alpha)(x_1'-x_1)
		\end{align*}
		is valid. In the last case where $y\leq x_1\leq x_1'$ it holds that $\big|\rho_{\alpha}(x_1;y) - \rho_{\alpha}(x'_1;y)\big| = (1-\alpha)(x_1'-x_1)$. All three cases together prove (\ref{eq:l_cont_of_rho}). 
		
		Using the Lipschitz continuity (\ref{eq:l_cont_of_rho}), from Lemma~19.31 in \citet{vdv2000asympstat} we obtain that 
		\begin{equation*}
		\mathbb{G}_n\Big[a_n\big(\rho_{\alpha}\big(q_{\alpha}+u_1/a_n;Y\big)-\rho_{\alpha}\big(q_{\alpha};Y \big) \big) - u_1(1_{Y\leq q_{\alpha}}-\alpha)\Big] = o_\mathcal{P}(1). 
		\end{equation*}
		
		Therefore, from the definition of $V_n(u_1)$ in (\ref{eq:provessvn}), 
		\begin{align}
		\frac{a_n}{\sqrt{n}}V_n(u_1)&= \sqrt{n}\mathbb{E}_n\big[a_n\big(\rho_{\alpha}(q_{\alpha}+u_1/a_n;Y)-\rho_{\alpha}(q_{\alpha};Y)\big)\big]\notag\\
		&= \frac{u_1}{\sqrt{n}}\sum_{i=1}^{n}\big(1_{X_i\leq q_{\alpha}}-\alpha\big) + \int_0^{u_1}\sqrt{n}\big(F(q_{\alpha}+s/a_n)-F(q_{\alpha})\big)\,\dif s + o_{\mathcal{P}}(1).\label{eq:expansionprocess1}
		\end{align}
		The first term converges by the central limit theorem to $u_1W_1$ for $W_1$ as stated. For the second, note that under Assumption [A] we also have
		\begin{equation*}
		\lim_{n\to\infty} \int_0^t \sqrt{n}\big(F(q_{\alpha}+s/a_n)-F(q_{\alpha})\big)\,\dif s = \int_0^t \psi_{\alpha}(s)\,\dif s.
		\end{equation*}
		Indeed, using the monotonicity of $\sqrt{n}\big(F(q_{\alpha}+s/a_n)-F(q_{\alpha})\big)$, this follow from the dominated convergence theorem if $|\psi_{\alpha}(t)| < \infty$, and the fact that $\{\psi_{\alpha} = \infty\}$ and $\{\psi_{\alpha} = -\infty\}$ are open intervals (see Proposition \ref{prop:theassumptionA}, a.) if $|\psi_{\alpha}(t)| = \infty$. 
		
		Now assume [B]. Below we show that for every compact set $K_2$ with $es_{\alpha}\in K_2$ there exists a function $m(y)$ such that for every $x_2,x_2'\in K_2$,
		\begin{equation}\label{eq:l_cont_of_rest}
		\big|S(q_{\alpha}, x_2';y)-S(q_{\alpha}, es_{\alpha};y)\big|\leq m(y)\big|x_2'-x_2\big|,
		\end{equation}
		where $m(y)$ fulfills $\mathbb{E}[m(Y)]<\infty$.
		
		To deduce (\ref{eq:processconvlem1})	we again apply Lemma~19.31, in \citet{vdv2000asympstat}. From (\ref{eq:l_cont_of_rest}) and since 
		\begin{equation*}
		\partial_{x_2} S(q_{\alpha}, x_2;y) \, (es_{\alpha}) = G'(es_{\alpha})\big(es_{\alpha}-q_{\alpha}+\alpha^{-1}1_{y\leq q_{\alpha}}(q_{\alpha}-y)\big).
		\end{equation*}
		we thus obtain that 
		\begin{equation*}
		\mathbb{G}_n\Big[\sqrt{n}\big(S\big(q_{\alpha}, es_{\alpha} + u_2/\sqrt{n};Y\big)-S(q_{\alpha}, es_{\alpha} ;Y)\big) - u_2\,G'(es_{\alpha})\big(es_{\alpha}-q_{\alpha}+\alpha^{-1}1_{Y\leq q_{\alpha}}(q_{\alpha}-Y)\big)\Big] 
		\end{equation*}
		converges to zero in probability in $\ell^{\infty}(K_2)$. Using (\ref{eq:incrementquant}) this implies
		\begin{align}
		&U_n(u_2)\notag\\
		&= \sqrt{n}\,\mathbb{E}_n\Big[\sqrt{n}\big(S(q_{\alpha}, es_{\alpha}+u_2/\sqrt{n};Y)-S(q_{\alpha}, es_{\alpha};Y)\big)\Big]\notag\\[5pt]
		&=u_2\sqrt{n}\mathbb{E}_n\big[G'(es_{\alpha})\big(es_{\alpha}-q_{\alpha}+\alpha^{-1}1_{Y\leq q_{\alpha}}(q_{\alpha}-Y)\big) \big]  + n\Big(\int_{0}^{\nicefrac{u_2}{\sqrt{n}}}G'(es_{\alpha}+s)s\,\dif s\Big) + o_{\mathcal{P}}(1)\notag\\[5pt]
		&= u_2 \,G'(es_{\alpha}) \frac{1}{\sqrt{n}} \sum_{i=1}^{n} \big(es_{\alpha}-q_{\alpha}+\alpha^{-1}1_{X_i\leq q_{\alpha}}(q_{\alpha}-X_i)\big)+ \int_{0}^{u_2 }G'(es_{\alpha}+t/\sqrt{n})t\,\dif t + o_{\mathcal{P}}(1).\label{eq:expansionprocess2}
		\end{align}
		The first term converges weakly by the central limit theorem to $G'(es_{\alpha})\, u_2\, W_2$ for the stated $W_2$, as $ \big(es_{\alpha}-q_{\alpha}+\alpha^{-1}1_{X_i\leq q_{\alpha}}(q_{\alpha}-X_i) \big)_{i\in\N}$ is a sequence of i.i.d. random variables. The second term converges to $G'(es_{\alpha})\, u_2^2/2$, thus the limit process $U$ of $U_n$ has the asserted form. 
		To conclude the proof of (\ref{eq:processconvlem1}), it remains to show (\ref{eq:l_cont_of_rest}). 
		Using (\ref{eq:diff_s_1}) we compute
		\begin{align*}
		&\big|S(q_{\alpha},x_2';y)-S(q_{\alpha},x_2;y)\big| = \big|S(q_{\alpha},x_2'+(x_2-x_2');y)-S(q_{\alpha},x_2';y)\big|\\
		&=\Big|\big(G(x_2)-G(x_2')\big)\big(x_2'-q_{\alpha}+\alpha^{-1}1_{y\leq q_{\alpha}}(q_{\alpha}-y)\big) + \int_{0}^{x_2-x_2'}G'(x_2'+s)s\,\dif s\Big|\\
		&\leq \big|G(x_2)-G(x_2')\big|\big(c_0 + |q_{\alpha}| +\alpha^{-1}1_{y\leq q_{\alpha}}(q_{\alpha}-y)\big) + \Big|\int_{0}^{x_2-x_2'}G'(x_2'+s)s\,\dif s\Big|.
		\end{align*}
		It follows from the mean value theorem, that we can find a $\xi\in K_2$ for which  $\big|G(x_2)-G(x_2')\big|=\big|G'(\xi)(x_2-x_2')\big|$. The right hand side of this is smaller than $C |x_2-x_2'|$, as $G'$ is continuous and hence bounded on $K_2$ ($C=\sup_{x_2\in K_2} G'(x_2)$). This ends the discussion of the first addend above as we therefore obtain \begin{equation*}
		\big|G(x_2)-G(x_2')\big|\big(c_0 + |q_{\alpha}| +\alpha^{-1}1_{y\leq q_{\alpha}}(q_{\alpha}-y)\big) \leq C \big(c_0 + |q_{\alpha}| +\alpha^{-1}1_{y\leq q_{\alpha}}(q_{\alpha}-y)\big) |x_2-x_2'|.
		\end{equation*} 
		For the other addend we utilize the (second) mean value theorem to get \begin{align*}
		\Big|\int_{0}^{x_2-x_2'} G'(x_2'+s)s\,\dif s\Big| &= \big|G'(x_2'+\xi)\xi (x_2-x_2')\big| \qquad \text{for some }\xi\in [-2c_2,2c_2]\\
		&\leq C' c_2 |x_2-x_2'|,
		\end{align*}
		where the inequality holds because $x_2\mapsto G'(x_2)x_2$ is continuous and thus bounded on $K_2$. All in all we end up with \begin{equation*}
		\big|S(q_{\alpha},x_2';y)-S(q_{\alpha},x_2;y)\big| \leq\big(C \big(c_0 + |q_{\alpha}| +\alpha^{-1}1_{y\leq q_{\alpha}}(q_{\alpha}-y)\big) +  C' c_2 \big) |x_2-x_2'| =: m(y)|x_2-x_2'|.
		\end{equation*}
		Now under [B] it holds that $\mathbb{E}\big[1_{Y\leq 0}Y^2\big]<\infty$, so in this case $\mathbb{E}[m(Y)^2]<\infty$ is true. 

		Finally, if Assumptions [A] and [B] are both satisfied, then the expansions (\ref{eq:expansionprocess1}) and (\ref{eq:expansionprocess2}) hold true and the joint process convergence follows, where the covariance of $W_1$ and $W_2$ is easily computed. 
	\end{proof}

	\begin{proof}[{\sl Proof of Lemma \ref{lem:approxminimizer}}]
		The process $U$ is quadratic and has the unique minimizer $u_2^0 = - W_2$. Further, from the form (\ref{eq:provessun}) of $U_n$ and the argument leading to (\ref{eq:esscoringexpl}) it follows that the unique minimizer of $U_n$ is given by
		\[ u_{2,n} = \sqrt{n}\,\big(\alpha^{-1} \mathbb{E}_n \big[1_{Y \leq q_\alpha}\, \big(Y-q_\alpha\big)  \big] - es_\alpha + q_\alpha \big),\]  
		which, by the central limit theorem, converges in distribution to $- W_2$. 
		
		To show (\ref{eq:minim_op_one}) we apply Lemma \ref{lem:diff_between_minimizer} to the processes
		\begin{align*}
		M_n(u_2)=U_n(u_2)+V_n(a_n(\widehat{q}_{n,\alpha} - q_{\alpha}))\big(1+\alpha^{-1}G(es_{\alpha}+u_2/\sqrt{n})\big),
		\end{align*}
		which is minimzed by $\sqrt{n}(\widehat{es}_{n,\alpha}-es_{\alpha})$, see (\ref{eq:esscoringexpl}), and  
		\begin{align*}
		M_n'(u_2)=U_n(u_2)+V_n(a_n(\widehat{q}_{n,\alpha} - q_{\alpha}))\big(1+\alpha^{-1}G(es_{\alpha})\big),
		\end{align*}
		so that $U_n$ will play the role of $N_n$ in Lemma \ref{lem:diff_between_minimizer}, which converges on compact sets to $U$ by Lemma \ref{eq:processconvlem2}.  
		Now, in Lemma \ref{cor:prep_es_tight} we showed that $\sqrt{n}(\widehat{es}_{n,\alpha}-es_{\alpha})$ is a tight sequence, and in the beginning of this proof we already showed that for the minimizers, $u_{2,n} \Rightarrow u_2^0$.  

		It thus remains to show that (\ref{eq:argmaxunifapprox}) holds true for the above choices of $M_n(u_2)$ and $M_n'(u_2)$,
		\begin{align}\label{eq:approxprocessunif}
		&\sup_{u_2\in K} \Big| U_n(u_2)+V_n(a_n(\widehat{q}_{n,\alpha} - q_{\alpha}))\big(1+\alpha^{-1}G(es_{\alpha}+u_2/\sqrt{n})\big)\nonumber\\
		&\qquad\qquad -\Big( U_n(u_2)+V_n(a_n(\widehat{q}_{n,\alpha} - q_{\alpha}))\big(1+\alpha^{-1}G(es_{\alpha})\big) \Big)\Big|=o_{\mathcal{P}}(1).
		\end{align}	
		To this end, assume $K\subset [-c_0,c_0]$. Then 
		\begin{align*}
		\sup_{u_2\in K} &\Big| U_n(u_2)+V_n(a_n(\widehat{q}_{n,\alpha} - q_{\alpha}))\big(1+\alpha^{-1}G(es_{\alpha}+u_2/\sqrt{n})\big)\\
		&\qquad -\Big( U_n(u_2)+V_n(a_n(\widehat{q}_{n,\alpha} - q_{\alpha}))\big(1+\alpha^{-1}G(es_{\alpha})\big) \Big)\Big|\\
		&=\alpha^{-1}c_0\Big|\frac{1}{\sqrt{n}}V_n(a_n(\widehat{q}_{n,\alpha} - q_{\alpha}))\Big|\sup_{u_2\in K}\Big|\frac{G(es_{\alpha}+u_2/\sqrt{n})-G(es_{\alpha})}{c_0/\sqrt{n}}\Big|\\
		&\leq \alpha^{-1}c_0\big|\frac{1}{\sqrt{n}}V_n(a_n(\widehat{q}_{n,\alpha} - q_{\alpha}))\big|\Big|\frac{G(es_{\alpha}+c_0/\sqrt{n})-G(es_{\alpha}-c_0/\sqrt{n})}{c_0/\sqrt{n}}\Big|
		\end{align*}
		since $G$ is monotonically non-decreasing. The first two factors are constant, the last factor is $O(1)$ since the fraction converges to $2G'(es_{\alpha})$, and it remains to show that $V_n\big(a_n(\widehat{q}_{n,\alpha}-q_{\alpha})\big) = o_{\mathcal{P}}(\sqrt{n})$, which is implied by
		\begin{equation}\label{eq:hilfapproxtight}
		V_n\big(a_n(\widehat{q}_{n,\alpha}-q_{\alpha})\big) = O_{\mathcal{P}}\big(\sqrt{n}/a_n\big).
		\end{equation}
		To see this, we first remark that $a_n(\widehat{q}_{n,\alpha}-q_{\alpha})$ is a tight sequence. This follows from the results in \citet{Knight2002LimDistr}, but is directly implied by (\ref{eq:processconvlem1}), convexity of the $V_n$ and $V$, and uniqueness of the minimizer of $V(u_1)$ with the aid of Lemma 2.2 in \citet{davis1992mestimation}. 
		Thus given $\varepsilon>0$ there exists a compact set $K_1$ with $\mathcal{P}\big(a_n(\widehat{q}_{n,\alpha}-q_{\alpha}) \in K_1\big) \geq 1-\varepsilon$. 
		Since for fixed compact $K_1\subset \R$ the map $f\mapsto \inf_{K_1} f$ is continuous for $f\in\ell^{\infty}(K_1)$ (w.r.t. the sup-norm), (\ref{eq:processconvlem1}) implies that $\inf_{K_1} \big(a_n/\sqrt{n}\big) \,V_n \Rightarrow \inf_{K_1} V$, in particular $\inf_{K_1} \big(a_n/\sqrt{n}\big) \,V_n$ is a tight sequence. To conclude note that \begin{align*}
		\mathcal{P}\Big(\frac{a_n}{\sqrt{n}} \,V_n(a_n(\widehat{q}_{n,\alpha}-q_{\alpha})) \geq C\Big) \leq \mathcal{P}\Big(\inf_{K_1} \frac{a_n}{\sqrt{n}}\,V_n \geq C\Big) + \mathcal{P}\big(a_n(\widehat{q}_{n,\alpha}-q_{\alpha}) \notin K_1\big),
		\end{align*}
		which implies (\ref{eq:hilfapproxtight}), and finishes the proof of the lemma. 
		
	\end{proof}
}



\bibliographystyle{chicago}
\nocite{*}
\bibliography{ExpectedShortfall}

\setcounter{section}{0}
\setcounter{equation}{0}
\setcounter{page}{1}

{\title{Asymptotics for the expected shortfall: Technical Supplement}}
\author{}
\maketitle

\begin{abstract}
We provide additional details to the paper \citet{zwingHolz2016}.
\end{abstract}

\section{Notation and results from the main paper}
For i.i.d.~observations $X_1, \ldots, X_n$ distributed according to the distribution function $F$, we use the notation \begin{equation*}
 \mathbb{E}_n [f(Y)]  =\frac{1}{n}\sum_{i=1}^{n} f(X_i) \qquad \text{ and } \quad \mathbb{G}_n [f(Y)] = \sqrt{n}(\mathbb{E}_n-\mathbb{E})[f(Y)]
\end{equation*}
where $E |f(X_1)| < \infty$. Note that $\mathbb{E}_n$ is the empirical expectation w.r.t. $X_1, \ldots , X_n$. 
Further, we let $X_{1:n} \leq \ldots \leq X_{n:n}$ denote the order statistics of a sample $X_1, \ldots, X_n$. 
We denote by $\rightarrow$ convergence in distribution. 

Suppose that the random variable $Y$ has distribution function $F$ and satisfies $\mathbb{E}[ |Y|] < \infty$. Given $ \alpha \in (0,1)$ the lower tail expected shortfall of $Y$ at level $\alpha$ is defined by
\[ es_\alpha = \frac{1}{\alpha}\, \int_0^\alpha F^{-1}(u)\, du.\]
For the specific value of $\alpha$ under consideration, we shall always impose the following. 
\medskip

{\bf Assumption.}\quad For the given $\alpha \in (0,1)$, the distribution function $F$ is continuous and strictly increasing at its $\alpha$-quantile $q_\alpha$.
Under this assumption, $F$ has a unique $\alpha$-quantile. 

Further, for the expected shortfall we have that

\begin{equation}\label{releq:lowertailes}
 es_\alpha = \frac{1}{\alpha}\, \int_{- \infty}^{q_\alpha}\, y\, dF(y).
\end{equation}
Consider the class of strictly consistent scoring functions for the bivariate parameter $(q_\alpha, es_\alpha)$ as introduced by \citet{fisszieg2015elicitability},
\begin{align}\label{releq:s_rep_1}
\begin{split}
	S(x_1, x_2; y) & = \big(1_{ y \leq x_1} - \alpha \big) (x_1-y) + G(x_2) \, \big(x_2 + \alpha^{-1}\big(1_{ y \leq x_1}-\alpha\big) (x_1 - y)\,   \big)\\
	& \qquad - \mathcal{G}(x_2) - G(x_2)y,
\end{split}
\end{align}
where $\mathcal{G}$ is a three-times continuously differentiable function, $\mathcal{G}' = G$ and it is required that $G' >0$. Further, from the proof of Corollary 5.5 in \citet{fisszieg2015elicitability} it follows that one may choose $\mathcal{G}$ so that $\lim_{x\to -\infty}G(x)=0$. We may write
\begin{align}\label{releq:s_rep_2}
\begin{split}
	S(x_1, x_2; y) &=\big(1+\alpha^{-1}G(x_2)\big)\, \big(1_{ y \leq x_1} - \alpha \big) (x_1-y) + G(x_2) \, \big(x_2 -y\big) - \mathcal{G}(x_2)
\end{split}
\end{align}

Denote the asymptotic contrast function	by
\[	S(x_1, x_2; F) = \mathbb{E} [S(x_1, x_2; Y)].\]

	Let $X_1, \ldots , X_n$ be i.i.d., distributed according to $F$ with $ \mathbb{E} |X_1| < \infty$. Consider the minimum contrast estimator for the bivariate parameter $(q_\alpha, es_\alpha)$ defined by
	\begin{align*}
		(\widehat{q}_{n,\alpha}, \widehat{es}_{n,\alpha}) &\in \argmin_{(x_1,x_2)\in\R^2} \sum_{i=1}^{n} S(x_1,x_2;X_i).
\end{align*}

%

\textbf{Assumption [A]}: There exists a function $\psi_{\alpha}:\R\to\overline{\R}$ with 
\begin{align*}
\lim_{t\to\infty} \psi_{\alpha}(t)=\infty, \qquad 
\lim_{t\to -\infty}\psi_{\alpha}(t)=-\infty
\end{align*}
such that for some deterministic, positive sequence $(a_n)_n$ with $a_n \to \infty$ it holds that \begin{equation*}
\lim_{n\to\infty} \sqrt{n}\big[F(q_{\alpha}+t/a_n)-F(q_{\alpha}) \big] = \psi_{\alpha}(t).
\end{equation*}

\vspace{12pt}
\textbf{Assumption [B]}: It holds that $\mathbb{E} \big[1_{Y \leq 0}\, Y^2\big]<\infty$.
\vspace{12pt}

\setcounter{theorem}{2}
\begin{theorem}\label{relthm:main_thm_es}
	
	Under Assumptions [A] and [B], we have that 
\[ \big(a_n(\widehat{q}_{n,\alpha}-q_{\alpha}), \sqrt{n}(\widehat{es}_{n,\alpha}-es_{\alpha})\big) \Rightarrow \big(\psi^{\leftrightarrow}_{\alpha}( W_1), W_2 \big),\]
	where  
	\begin{equation*}
	 (W_1,W_2)\sim \mathcal{N}(0,\Sigma), \qquad 	\Sigma=\begin{pmatrix}
	 	\alpha(1-\alpha) && (1-\alpha)(q_{\alpha}-es_{\alpha})\\
	 	(1-\alpha)(q_{\alpha}-es_{\alpha}) && \Var\big(1_{Y\leq q_{\alpha}}(q_{\alpha}-Y)/\alpha\big)
	 	\end{pmatrix}
	 \end{equation*}
and
 \begin{equation}\label{releq:formulainvfunct}
	\psi_{\alpha}^{\leftrightarrow}(x)=\begin{cases}
	\inf\{t\leq 0 \,|\, \psi_{\alpha}(t)\geq x\} &\mbox{if } x<0\\
	0 &\mbox{if } x=0\\
	\sup\{t\geq 0 \,|\, \psi_{\alpha}(t)\leq x\} &\mbox{if } x>0.
	\end{cases}
	\end{equation}
	
\end{theorem}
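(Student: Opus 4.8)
The plan is to realize both $a_n(\widehat q_{n,\alpha}-q_\alpha)$ and $\sqrt n(\widehat{es}_{n,\alpha}-es_\alpha)$ as argmin's of a reparametrized, rescaled contrast process and then to invoke the argmax-continuity theorem (Corollary~5.58 in \citet{vdv2000asympstat}); the obstacle is that the two coordinates converge on the different scales $a_n$ and $\sqrt n$, so the joint contrast does not converge under a single normalization, and the device around this exploits that the scoring function $S(x_1,x_2;y)=(1+\alpha^{-1}G(x_2))\rho_\alpha(x_1;y)+h(x_2;y)$ separates the two parameters, together with Lemma~\ref{lem:diff_between_minimizer}. Concretely, I would first record the elementary increment identities for $S$ (Lemma~\ref{lem:dif_s_reformulated}) and insert $x_1=q_\alpha+u_1/a_n$, $x_2=es_\alpha+u_2/\sqrt n$ to obtain the decomposition (\ref{processrepres}), in which $V_n(u_1)$ as in (\ref{eq:provessvn}) depends only on $u_1$ and equals $\sum_i\rho_\alpha(q_\alpha+u_1/a_n;X_i)-\rho_\alpha(q_\alpha;X_i)$, and $U_n(u_2)$ as in (\ref{eq:provessun}) depends only on $u_2$. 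Since $1+\alpha^{-1}G>0$, the minimizer in $u_1$ is $a_n(\widehat q_{n,\alpha}-q_\alpha)$ for every $u_2$ (Proposition~\ref{lem:connect_to_emp_vers}), while $\sqrt n(\widehat{es}_{n,\alpha}-es_\alpha)$ minimizes $M_n(u_2):=U_n(u_2)+\big(1+\alpha^{-1}G(es_\alpha+u_2/\sqrt n)\big)V_n\big(a_n(\widehat q_{n,\alpha}-q_\alpha)\big)$.

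Next I would establish three ingredients. (i) Tightness of $\sqrt n(\widehat{es}_{n,\alpha}-es_\alpha)$ (Lemma~\ref{cor:prep_es_tight}) by checking the hypotheses of the nuisance-rate theorem~\ref{thm:nuisance_rate_of_conv} with the exponents $2$ and $1$: a uniform-in-$\eta$ quadratic lower bound $\mathbb E[S(\eta,\vartheta;Y)-S(\eta,es_\alpha;Y)]\ge C|\vartheta-es_\alpha|^2$ coming from convexity of $\eta\mapsto\mathbb E\rho_\alpha(\eta;Y)$ and $G'(es_\alpha)>0$, and a matching $O(\delta)$ bound on the modulus of the empirical process via a bracketing maximal inequality. (ii) The process limits of Lemma~\ref{thm:conv_to_V_U}: under [A], $(a_n/\sqrt n)V_n\Rightarrow V(u_1)=u_1W_1+\int_0^{u_1}\psi_\alpha(t)\,dt$, using the CLT for the linear part and monotone/dominated convergence for the integral term (exploiting that $\{\psi_\alpha=\pm\infty\}$ are open intervals by Proposition~\ref{prop:theassumptionA}); under [B], $U_n\Rightarrow U(u_2)=G'(es_\alpha)(u_2W_2+u_2^2/2)$, where Assumption~[B] is exactly what makes the Lipschitz majorant of $x_2\mapsto S(q_\alpha,x_2;\cdot)$ square-integrable so that Lemma~19.31 of \citet{vdv2000asympstat} applies; and jointly $(W_1,W_2)$ is Gaussian with covariance $(1-\alpha)(q_\alpha-es_\alpha)$, read off from the two linear parts. (iii) Tightness of $a_n(\widehat q_{n,\alpha}-q_\alpha)$, from convexity of $V_n$, the convergence in (ii), and uniqueness of the minimizer of $V$ via Lemma~2.2 of \citet{davis1992mestimation}.

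The key step, and the one I expect to be the main obstacle, is Lemma~\ref{lem:approxminimizer}: proving $\sqrt n(\widehat{es}_{n,\alpha}-es_\alpha)=u_{2,n}+o_\mathcal P(1)$, where $u_{2,n}=\sqrt n\big(\alpha^{-1}\mathbb E_n[1_{Y\le q_\alpha}(Y-q_\alpha)]-es_\alpha+q_\alpha\big)$ is the explicit minimizer of $U_n$ and, by the CLT, $u_{2,n}\Rightarrow -W_2$. One cannot feed $M_n$ to the argmax theorem directly, because the term $V_n(a_n(\widehat q_{n,\alpha}-q_\alpha))$ does not converge. The remedy is to compare $M_n$ with $M_n'(u_2):=U_n(u_2)+(1+\alpha^{-1}G(es_\alpha))V_n(a_n(\widehat q_{n,\alpha}-q_\alpha))=N_n(u_2)+R_n$, where $N_n=U_n$ converges and $R_n$ does not depend on $u_2$: tightness of $a_n(\widehat q_{n,\alpha}-q_\alpha)$ together with $\inf_{K_1}(a_n/\sqrt n)V_n\Rightarrow\inf_{K_1}V$ gives $V_n(a_n(\widehat q_{n,\alpha}-q_\alpha))=O_\mathcal P(\sqrt n/a_n)=o_\mathcal P(\sqrt n)$, and since $G$ is monotone with $G(es_\alpha\pm c/\sqrt n)-G(es_\alpha)=O(1/\sqrt n)$ one obtains $\sup_{u_2\in K}|M_n-M_n'|=o_\mathcal P(1)$; Lemma~\ref{lem:diff_between_minimizer} — a two-copy/Portmanteau argument engineered precisely so that the ``random constant'' $R_n$ cancels — then yields the approximation.

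Finally, $\big(a_n(\widehat q_{n,\alpha}-q_\alpha),\sqrt n(\widehat{es}_{n,\alpha}-es_\alpha)\big)=\big(a_n(\widehat q_{n,\alpha}-q_\alpha),u_{2,n}\big)+o_\mathcal P(1)$, and this pair minimizes the separated process $Z_n(u_1,u_2)=(1+\alpha^{-1}G(es_\alpha))(a_n/\sqrt n)V_n(u_1)+U_n(u_2)$, which by the joint convergence (\ref{eq:processconvlem3}) and the continuous mapping theorem converges in $\ell^\infty$ of compacts to $Z(u_1,u_2)=(1+\alpha^{-1}G(es_\alpha))V(u_1)+U(u_2)$. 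The minimizer of $Z$ splits into (minimizer of $V$, minimizer of $U$): $U$ is quadratic with minimizer $-W_2$, and the shape of $\psi_\alpha$ in Proposition~\ref{prop:theassumptionA} ($\kappa_\pm,\beta>0$) makes $V$ coercive with $V'=W_1+\psi_\alpha$ having an a.s.\ unique sign change, so the minimizer of $V$ is $\psi^{\leftrightarrow}_\alpha(-W_1)$ with $\psi^{\leftrightarrow}_\alpha$ as in (\ref{eq:formulainvfunct}). Applying the argmax-continuity theorem — legitimate because both coordinates are tight and both limiting processes have a.s.\ unique minimizers — gives convergence to $(\psi^{\leftrightarrow}_\alpha(-W_1),-W_2)$, which has the same law as $(\psi^{\leftrightarrow}_\alpha(W_1),W_2)$ since $(W_1,W_2)\stackrel{d}{=}(-W_1,-W_2)$; a short computation of $\Var(1_{Y\le q_\alpha}(q_\alpha-Y)/\alpha)$ and of the cross term $\Cov(1_{Y\le q_\alpha},1_{Y\le q_\alpha}(q_\alpha-Y)/\alpha)$ confirms the matrix $\Sigma$.
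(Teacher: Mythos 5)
Your proposal is correct and follows essentially the same route as the paper's proof: the same decomposition into $\big(1+\alpha^{-1}G(es_\alpha+u_2/\sqrt n)\big)V_n(u_1)+U_n(u_2)$, tightness via the two-exponent rate theorem, the process limits $V$ and $U$, the $M_n$ versus $M_n'$ comparison through Lemma~\ref{lem:diff_between_minimizer} to absorb the non-convergent term $V_n\big(a_n(\widehat q_{n,\alpha}-q_\alpha)\big)$, and the final argmax-continuity step for the separated process $Z_n$. The only cosmetic difference is that you make the sign-flip identification $\big(\psi^{\leftrightarrow}_\alpha(-W_1),-W_2\big)\stackrel{d}{=}\big(\psi^{\leftrightarrow}_\alpha(W_1),W_2\big)$ explicit, which the paper leaves implicit.
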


\textbf{Assumption [A$^k$]:} For each $m\in\{1, \ldots, k\}$ and corresponding $\alpha_m$ and $q_{\alpha_m}$, Assumption [A] is satisfied with associated sequence $(a_{m,n})_n$ and function $\psi_{\alpha_m}(t)$.

\begin{theorem}\label{relcor:conv_estimator_mult}
Let [A$^k$] and [B] hold. 	
	Then 
\begin{align*}
	\Big(a_{1,n} \big( \widehat{q}_{n,\alpha_1} - q_{\alpha_1}\big), \sqrt{n}\, \big(\widehat{es}_{n,\alpha_1} - es_{\alpha_1}\big),&\ldots , a_{k,n} \big( \widehat{q}_{n,\alpha_k} - q_{\alpha_k}\big), \sqrt{n}\, \big(\widehat{es}_{n,\alpha_k} - es_{\alpha_k}\big)\Big) \\[5pt]
	&\Rightarrow (z_{1,1}, z_{1,2}, \ldots , z_{k,1}, z_{k,2}),
\end{align*}
where for $m=1, \ldots, k$, 
$		z_{m,1} = \psi^{\leftrightarrow}_{\alpha_m}( W{m,1}) $ and $ 
		z_{m,2} = W_{m,2}$,
with $\psi^{\leftrightarrow}_{\alpha_m}$ as in (\ref{releq:formulainvfunct})	and the vector $\big(W_{1,1},W_{1,2}, \ldots , W_{k,1},W_{k,2}\big)$ distributed according to $\mathcal{N}\big(0,\Sigma\big)$ with $\Sigma$ determined by \begin{align*}
		\Cov \big(W_1^s,W_1^t\big) &= \alpha_s\wedge \alpha_t - \alpha_s \alpha_t,\\[5pt]
		\Cov \big(W_2^s, W_2^t) &= \frac{\alpha_s\wedge\alpha_t}{\alpha_s\alpha_t}\big(q_{\alpha_s}q_{\alpha_t} - (q_{\alpha_s}+q_{\alpha_t})es_{\alpha_s\wedge\alpha_t} \big) + \frac{1}{\alpha_s\alpha_t} \mathbb{E}\big[1_{Y\leq q_{\alpha_s\wedge \alpha_t}}Y^2\big],\\
		& \quad + \big(es_{\alpha_s} - q_{\alpha_s} \big) \big(es_{\alpha_t} - q_{\alpha_t} \big)\\[5pt]
		\Cov(W_1^s,W_2^t) &= \frac{\alpha_s\wedge\alpha_t}{\alpha_t}\big(q_{\alpha_t}-es_{\alpha_s\wedge \alpha_t}\big) - \alpha_s\big(q_{\alpha_t}-es_{\alpha_t}\big)
	\end{align*}
	for $s,t\in\{1,\ldots , k\}$. 	
\end{theorem}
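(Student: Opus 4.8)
The plan is to run the five-step scheme behind the proof of Theorem~\ref{thm:main_thm_es} simultaneously over all $k$ levels, with the stacked local parameter $u=(u_1^{(1)},u_2^{(1)},\dots,u_1^{(k)},u_2^{(k)})\in\R^{2k}$. The structural fact that makes this work is that the minimum-contrast estimator $(\widehat q_{n,\alpha_m},\widehat{es}_{n,\alpha_m})_{m=1}^{k}$ is obtained by minimizing, for each $m$ \emph{separately}, the level-$\alpha_m$ criterion $\sum_{i=1}^{n}S_{\alpha_m}(x_1^{(m)},x_2^{(m)};X_i)$. Localizing $x_1^{(m)}=q_{\alpha_m}+u_1^{(m)}/a_{m,n}$, $x_2^{(m)}=es_{\alpha_m}+u_2^{(m)}/\sqrt n$ and writing $V_n^{(m)},U_n^{(m)}$ for the level-$\alpha_m$ analogues of the processes in \eqref{eq:provessvn} and \eqref{eq:provessun}, the stacked local parameter minimizes a process that decouples across $m$ and, within each $m$, into a $V_n^{(m)}$-part and a $U_n^{(m)}$-part.

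First I would record that Steps~1, 2 and~4 of the proof of Theorem~\ref{thm:main_thm_es} are single-coordinate statements and, under [A$^k$] and~[B], apply verbatim for every $m$: Lemma~\ref{cor:prep_es_tight} gives tightness of $\sqrt n(\widehat{es}_{n,\alpha_m}-es_{\alpha_m})$; the convexity argument in the proof of Lemma~\ref{lem:approxminimizer} gives tightness of $a_{m,n}(\widehat q_{n,\alpha_m}-q_{\alpha_m})$; and Lemma~\ref{lem:approxminimizer} itself gives $\sqrt n(\widehat{es}_{n,\alpha_m}-es_{\alpha_m})=u_{2,n}^{(m)}+o_{\mathcal{P}}(1)$, where $u_{2,n}^{(m)}=\sqrt n\big(\alpha_m^{-1}\mathbb{E}_n[1_{Y\le q_{\alpha_m}}(Y-q_{\alpha_m})]-es_{\alpha_m}+q_{\alpha_m}\big)$ is the explicit (unique) minimizer of $U_n^{(m)}$. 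Collecting these over $m$ yields joint tightness of the full $2k$-vector and reduces the theorem to identifying the joint weak limit of $\big(a_{m,n}(\widehat q_{n,\alpha_m}-q_{\alpha_m}),u_{2,n}^{(m)}\big)_{m=1}^{k}$, which by construction is a sequence of minimizers of the separable process
\[
Z_n(u)=\sum_{m=1}^{k}\Big[\big(1+\alpha_m^{-1}G(es_{\alpha_m})\big)\,\tfrac{a_{m,n}}{\sqrt n}\,V_n^{(m)}(u_1^{(m)})+U_n^{(m)}(u_2^{(m)})\Big];
\]
here, as in \eqref{eq:approxprocessunif}, replacing $G(es_{\alpha_m}+u_2^{(m)}/\sqrt n)$ by $G(es_{\alpha_m})$ is harmless uniformly on compacta, coordinate by coordinate.

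The genuinely new step is the \emph{joint} convergence $Z_n\Rightarrow Z$ in $\ell^{\infty}(K)$ for every compact $K\subset\R^{2k}$. Using the level-$\alpha_m$ versions of the expansions \eqref{eq:expansionprocess1} and \eqref{eq:expansionprocess2} --- valid uniformly on compacta by the maximal-inequality argument in the proof of Lemma~\ref{thm:conv_to_V_U}, via the Lipschitz bounds \eqref{eq:l_cont_of_rho} and \eqref{eq:l_cont_of_rest} applied to the finite family indexed by $m$ --- each $\tfrac{a_{m,n}}{\sqrt n}V_n^{(m)}$ and each $U_n^{(m)}$ equals, up to $o_{\mathcal{P}}(1)$, a deterministic part that converges under [A$^k$] (respectively to $\int_0^{u_1^{(m)}}\psi_{\alpha_m}(s)\,\dif s$ and to $(u_2^{(m)})^{2}G'(es_{\alpha_m})/2$) plus a part linear in $u_1^{(m)}$, respectively $u_2^{(m)}$, whose coefficient is a continuous linear functional of the empirical average $\tfrac1{\sqrt n}\sum_{i=1}^{n}\xi_i$ of the i.i.d.\ vector $\xi_i=\big(1_{X_i\le q_{\alpha_1}}-\alpha_1,\zeta_i^{(1)},\dots,1_{X_i\le q_{\alpha_k}}-\alpha_k,\zeta_i^{(k)}\big)$, with $\zeta_i^{(m)}=es_{\alpha_m}-q_{\alpha_m}+\alpha_m^{-1}1_{X_i\le q_{\alpha_m}}(q_{\alpha_m}-X_i)$. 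The multivariate central limit theorem applied to $\tfrac1{\sqrt n}\sum_i\xi_i$ then gives convergence of all finite-dimensional distributions of $Z_n$, and joint asymptotic tightness follows from the coordinatewise (marginal) tightness together with Lemma~1.4.3 and Theorem~1.5.4 of \citet{vdvwellner1996weak}, exactly as in the proof of Lemma~\ref{lem:diff_between_minimizer}; the continuous mapping theorem yields $Z_n\Rightarrow Z$, with $Z(u)=\sum_{m}\big[(1+\alpha_m^{-1}G(es_{\alpha_m}))V^{(m)}(u_1^{(m)})+U^{(m)}(u_2^{(m)})\big]$ again separable over $m$. Its argmin therefore splits as $(z_{m,1},z_{m,2})_{m}$, where $(z_{m,1},z_{m,2})$ is the a.s.\ unique argmin of the $m$-th summand and, by Step~5 of the proof of Theorem~\ref{thm:main_thm_es}, equals $\big(\psi_{\alpha_m}^{\leftrightarrow}(W_{m,1}),W_{m,2}\big)$. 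Applying the argmax-continuity theorem --- its tightness hypothesis being the joint tightness above --- gives $\big(a_{m,n}(\widehat q_{n,\alpha_m}-q_{\alpha_m}),u_{2,n}^{(m)}\big)_{m}\Rightarrow(z_{m,1},z_{m,2})_{m}$, and combining this with the reduction of the previous paragraph proves the convergence claimed in Theorem~\ref{cor:conv_estimator_mult}.

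It remains to read off $\Sigma$, which is a direct computation of the covariances of the i.i.d.\ vector $\xi_i$ above (composed with the linear maps of the previous step): one uses $1_{Y\le q_{\alpha_s}}1_{Y\le q_{\alpha_t}}=1_{Y\le q_{\alpha_s\wedge\alpha_t}}$ --- valid because $\alpha\mapsto q_\alpha$ is nondecreasing and $F$ is continuous at each $q_{\alpha_m}$, so that $\mathbb{E}[1_{Y\le q_{\alpha_s\wedge\alpha_t}}]=\alpha_s\wedge\alpha_t$ --- together with $\mathbb{E}[1_{Y\le q_{\alpha_m}}Y]=\alpha_m\,es_{\alpha_m}$ and the finiteness of $\mathbb{E}[1_{Y\le q_{\alpha_s\wedge\alpha_t}}Y^2]$ from [B]; expanding $(q_{\alpha_s}-Y)(q_{\alpha_t}-Y)$ inside the relevant expectation produces the three displayed covariance formulas. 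The main obstacle is the one already present in the univariate case: the stacked local process does not converge when the quantile coordinates are normalized at the possibly mutually distinct rates $a_{m,n}$, which need not coincide with $\sqrt n$, so the argmax-continuity theorem cannot be invoked for the full minimum-contrast process directly --- one must first, coordinatewise, replace each expected-shortfall component by the explicit, convergent minimizer $u_{2,n}^{(m)}$ of $U_n^{(m)}$ via the uniform approximation \eqref{eq:approxprocessunif}, and only then apply the argmax theorem to the residual separable process $Z_n$. Propagating the $o_{\mathcal{P}}$-remainders uniformly over compact subsets of $\R^{2k}$ and verifying the joint asymptotic tightness are the technical points that require care; the rest is a coordinatewise repetition of the one-dimensional argument.
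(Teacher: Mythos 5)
Your proposal is correct and follows essentially the same route as the paper's own proof: apply the single-level results (tightness, the approximation of $\sqrt n(\widehat{es}_{n,\alpha_m}-es_{\alpha_m})$ by the minimizer of $U_n^{(m)}$, and the expansions of $V_n^{(m)},U_n^{(m)}$) coordinatewise, stack them into the separable process $Z_{n}$, obtain joint convergence via the multivariate CLT for the i.i.d.\ vector of influence terms together with joint tightness, apply the argmax-continuity theorem, and read off $\Sigma$ from the explicit covariance computations. The only difference is that you spell out the joint-tightness and finite-dimensional-convergence bookkeeping (via Lemma~1.4.3 and Theorem~1.5.4 of \citet{vdvwellner1996weak}) that the paper leaves implicit, which is a welcome but not substantively different addition.
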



For a probability measure $\mu$ on $[0,1]$, called the \emph{spectral measure}, 
\begin{equation*}
		\nu_{\mu}(F) = \int_{[0,1]} es_{\alpha} \,\dif \mu(\alpha)
	\end{equation*}
	is called the \emph{spectral risk measure} associated to $\mu$. Here, the boundary cases are given by $es_1 = E [Y]$ and $es_0 =$ essinf$\, Y$. 
If $\mu$ is finitely supported in $(0,1)$, $\nu_{\mu}(F)$ is a finite convex combination of expected shortfalls for different levels,
\begin{equation*}
	\nu_{\mu} = \sum_{m=1}^{k}p_m \, es_{\alpha_m} \quad \text{if} \quad \mu = \sum_{m=1}^{k} p_m \delta_{\alpha_m}.
\end{equation*}
\citet{fisszieg2015elicitability} show that strictly consistent scoring functions for $\nu_{\mu}$ in this case are given by
\begin{align*}
	S_{sp}(x_1, \ldots , x_k, x_{k+1};z) &= \sum_{m=1}^{k} \Bigg(\Big(1+\frac{p_m}{\alpha_m}G(x_{k+1})\Big)\big(1_{z\leq x}-\alpha\big)\,(x-z) \\
	&\qquad\qquad + p_m\big(G(x_{k+1})(x_{k+1}-z) - \mathcal{G}(x_{k+1}\big)\big)\Bigg). 
\end{align*}
where the functions $G$ and $\mathcal{G}$ are as above. 
If we define the corresponding M estimator
\[
\big(\widehat{q}_{n,\alpha_1}, \ldots , \widehat{q}_{n,\alpha_k}, \widehat{\nu}_{\mu,n}\big) \in \argmin_{x_1, \ldots , x_{k+1}}\, S_{sp}(x_1, \ldots , x_{k+1}; Y_i),
\]
then we have the following result. 
\begin{theorem}\label{relth:estspectralrisk}
We have that 
\begin{equation}\label{releq:specrisk}
\widehat{\nu}_{\mu,n} = \sum_{m=1}^{k}p_m\, \widehat{es}_{n,\alpha_m}.
\end{equation}
Consequently, under Assumptions [A$^k$] and [B] we have that
\[ \sqrt{n}\, \big(\widehat{\nu}_{\mu,n} - \widehat{\nu}_{\mu} \big) \Rightarrow \sum_{m=1}^k p_m \, W_{m,2},\] 
where the $W_{m,2}$ are as in Theorem \ref{relcor:conv_estimator_mult}. 
\end{theorem}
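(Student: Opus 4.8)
The plan is to treat the two assertions of the theorem separately. The identity~(\ref{releq:specrisk}) will be obtained by an explicit minimisation that runs parallel to Proposition~\ref{lem:connect_to_emp_vers}, and the weak-convergence statement will then follow at once from Theorem~\ref{relcor:conv_estimator_mult} together with the continuous mapping theorem.

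First I would rewrite the score so that the quantile coordinates are separated. Setting $\rho_{\alpha_m}(x_m;z)=\big(1_{z\le x_m}-\alpha_m\big)(x_m-z)$ as in Proposition~\ref{lem:connect_to_emp_vers}, applying~(\ref{releq:s_rep_2}) to each summand, and using that $\mu$ is a probability measure so that $\sum_{m=1}^k p_m=1$, one obtains
\[
S_{sp}(x_1,\dots,x_{k+1};z)=\sum_{m=1}^k\Big(1+\tfrac{p_m}{\alpha_m}G(x_{k+1})\Big)\rho_{\alpha_m}(x_m;z)+G(x_{k+1})(x_{k+1}-z)-\mathcal{G}(x_{k+1}).
\]
Since $\mathcal{G}$ may be (and is) chosen with $\lim_{x\to-\infty}G(x)=0$ while $G'>0$, we have $G>0$, so every coefficient $1+\tfrac{p_m}{\alpha_m}G(x_{k+1})$ is strictly positive, for any value of $x_{k+1}$. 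Hence, exactly as in the proof of Proposition~\ref{lem:connect_to_emp_vers}, for each fixed $x_{k+1}$ the minimisation of $\sum_{i=1}^n S_{sp}$ over $(x_1,\dots,x_k)$ is achieved, independently of $x_{k+1}$, at $x_m=\widehat q_{n,\alpha_m}$, so the first $k$ coordinates of the M-estimator may be taken equal to the empirical $\alpha_m$-quantiles.

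It then remains to minimise $x_{k+1}\mapsto\sum_{i=1}^n S_{sp}(\widehat q_{n,\alpha_1},\dots,\widehat q_{n,\alpha_k},x_{k+1};X_i)$. Differentiating, and using $\tfrac{\dif}{\dif x_{k+1}}\big(G(x_{k+1})(x_{k+1}-z)-\mathcal{G}(x_{k+1})\big)=G'(x_{k+1})(x_{k+1}-z)$ since $\mathcal{G}'=G$, the derivative equals
\[
G'(x_{k+1})\Big[\sum_{m=1}^k\tfrac{p_m}{\alpha_m}\sum_{i=1}^n\rho_{\alpha_m}(\widehat q_{n,\alpha_m};X_i)+\sum_{i=1}^n(x_{k+1}-X_i)\Big].
\]
The bracketed expression is affine and strictly increasing in $x_{k+1}$, and $G'>0$ throughout, so the objective is unimodal with a unique minimiser, namely the root $\widehat\nu_{\mu,n}=\mathbb{E}_n[Y]-\sum_{m=1}^k\tfrac{p_m}{\alpha_m}\mathbb{E}_n\big[\rho_{\alpha_m}(\widehat q_{n,\alpha_m};Y)\big]$. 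Inserting the identity $\widehat{es}_{n,\alpha_m}=\mathbb{E}_n[Y]-\alpha_m^{-1}\mathbb{E}_n\big[\rho_{\alpha_m}(\widehat q_{n,\alpha_m};Y)\big]$, which is a rearrangement of~(\ref{eq:esscoringexpl}) from Proposition~\ref{lem:connect_to_emp_vers}, and once more using $\sum_{m=1}^k p_m=1$, gives $\widehat\nu_{\mu,n}=\sum_{m=1}^k p_m\,\widehat{es}_{n,\alpha_m}$, which is~(\ref{releq:specrisk}). The elementary facts about the piecewise-linear quantile minimisation, and the rearrangement just invoked, are identical to the corresponding parts of Proposition~\ref{lem:connect_to_emp_vers}; this bookkeeping is the only place that requires care, and it is the step I expect to be the main (if modest) obstacle.

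For the limit law, write $\nu_\mu=\sum_{m=1}^k p_m\,es_{\alpha_m}$, so that~(\ref{releq:specrisk}) yields $\sqrt n\,(\widehat\nu_{\mu,n}-\nu_\mu)=\sum_{m=1}^k p_m\,\sqrt n\,(\widehat{es}_{n,\alpha_m}-es_{\alpha_m})$. Under~[A$^k$] and~[B], Theorem~\ref{relcor:conv_estimator_mult} gives the joint weak convergence of the full $2k$-dimensional vector; restricting to the expected-shortfall coordinates gives $\big(\sqrt n\,(\widehat{es}_{n,\alpha_m}-es_{\alpha_m})\big)_{m=1}^k\Rightarrow(W_{m,2})_{m=1}^k$, and applying the continuous mapping theorem to the linear functional $(v_1,\dots,v_k)\mapsto\sum_{m=1}^k p_m v_m$ yields $\sqrt n\,(\widehat\nu_{\mu,n}-\nu_\mu)\Rightarrow\sum_{m=1}^k p_m\,W_{m,2}$. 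The genuine mathematical content is entirely in Theorem~\ref{relcor:conv_estimator_mult}, which is available here, so nothing beyond the bookkeeping of the first part is at stake.
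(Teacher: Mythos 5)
Your proposal is correct and follows essentially the same route as the paper's own proof: decouple the quantile coordinates from $x_{k+1}$ using positivity of the coefficients $1+\tfrac{p_m}{\alpha_m}G(x_{k+1})$ and nonnegativity of $\rho_{\alpha_m}$, solve the first-order condition in $x_{k+1}$ (valid since $G'>0$), identify the root with $\sum_m p_m\,\widehat{es}_{n,\alpha_m}$ via the explicit formula from Proposition \ref{lem:connect_to_emp_vers}, and then conclude by Theorem \ref{relcor:conv_estimator_mult} and the continuous mapping theorem. The only differences are cosmetic (you express the root through $\mathbb{E}_n[Y]-\sum_m\tfrac{p_m}{\alpha_m}\mathbb{E}_n[\rho_{\alpha_m}]$ rather than the paper's equivalent form), so nothing further is needed.
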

%


\section{Missing details in the proof of Theorem \ref{relthm:main_thm_es}}
%


\setcounter{theorem}{8}
\begin{lemma}\label{rellem:dif_s_reformulated}
a.		We have that
\begin{align}
		&S(x_1,x_2+y_2; z) - S(x_1,x_2;z) \notag\\
		&=   \big(G(x_2+y_2)- G(x_2) \big)\big( x_2 - x_1 +\alpha^{-1} 1_{ z \leq x_1}\big(x_1-z \big)\big) + \int_{0}^{y_2} G'(x_2+s)s \,\dif s\label{releq:diff_s_1}\\
		&=\big(G(x_2+y_2)- G(x_2) \big) \big( x_2 - x_1 +\alpha^{-1} 1_{ z \leq x_1}\big(x_1-z \big)\big) \notag\\
		&\quad +  \frac{1}{2}G'(x_2+y_2) y_2^2 - \frac{1}{2} \int_{0}^{y_2} G''(x_2+s)s^2 \,\dif s\label{releq:diff_s_2}.
	\end{align}
b. Setting $\rho_{\alpha}(x_1;z) = \big(1_{z\leq x_1}-\alpha\big)(x_1-z)$ we have that 
\begin{equation}\label{releq:incrementquant}
		\rho_{\alpha}(x_1+y_1; z) - \rho_{\alpha}(x_1;z) = y_1\big(1_{z \leq x_1}-\alpha\big) + \int_0^{y_1} \big(1_{z\leq x_1+s}-1_{z\leq x_1}\big) \, \dif s.
	\end{equation}
		
c. Generally, we have that
\begin{align}
	S&(x_1+y_1, x_2+y_2;z) - S(x_1,x_2;z)\notag\\
		&= \big(1+\alpha^{-1}G(x_2+y_2)\big) \Bigg(y_1\big(1_{z\leq x_1}-\alpha\big) + \int_0^{y_1}1_{z\leq x_1+s}-1_{z\leq x_1} \, \dif s\Bigg)\notag\\[5pt]
		&\quad + \big(G(x_2+y_2)-G(x_2)\big)\big( x_2 - x_1 +\alpha^{-1} 1_{ y \leq x_1}\big(x_1-y \big)\big) \notag\\[5pt]
		&\quad +\frac{1}{2}G'(x_2+y_2)y_2^2 - \frac{1}{2}\int_0^{y_2}G''(x_2+s)s^2\,\dif s\label{releq:difference_s}.
\end{align}
		
\end{lemma}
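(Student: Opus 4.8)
The plan is to establish all three identities by direct computation, using the separated representation (\ref{releq:s_rep_2}) of the scoring function together with the fundamental theorem of calculus. Throughout write $\rho_\alpha(x_1;z) = (1_{z\le x_1}-\alpha)(x_1-z)$, so that
\[
S(x_1,x_2;z) = \big(1+\alpha^{-1}G(x_2)\big)\,\rho_\alpha(x_1;z) + G(x_2)(x_2-z) - \mathcal{G}(x_2),
\]
in which the dependence on $x_1$ (only through $\rho_\alpha$) and the dependence on $x_2$ are cleanly separated.

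First I would prove part a. Since only the prefactor $1+\alpha^{-1}G(x_2)$ and the term $G(x_2)(x_2-z)-\mathcal{G}(x_2)$ involve the second argument, $S(x_1,x_2+y_2;z)-S(x_1,x_2;z)$ equals $\alpha^{-1}\big(G(x_2+y_2)-G(x_2)\big)\rho_\alpha(x_1;z) + \phi(y_2)-\phi(0)$, where $\phi(s):=G(x_2+s)(x_2+s-z)-\mathcal{G}(x_2+s)$. Using $\mathcal{G}'=G$ one computes $\phi'(s)=G'(x_2+s)(x_2+s-z)$, whence $\phi(y_2)-\phi(0)=(x_2-z)\big(G(x_2+y_2)-G(x_2)\big)+\int_0^{y_2}G'(x_2+s)s\,\dif s$. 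Combining the two contributions and using the algebraic identity $\alpha^{-1}\rho_\alpha(x_1;z)+(x_2-z) = x_2-x_1+\alpha^{-1}1_{z\le x_1}(x_1-z)$ gives (\ref{releq:diff_s_1}). Equation (\ref{releq:diff_s_2}) then follows from a single integration by parts in $\int_0^{y_2}G'(x_2+s)s\,\dif s$ (differentiating $s\mapsto G'(x_2+s)$ and integrating $s$), which is legitimate since $\mathcal{G}$, hence $G$, is three-times continuously differentiable.

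For part b I would observe that $s\mapsto 1_{z\le x_1+s}(x_1+s-z)$ is exactly the positive part $(x_1+s-z)_+$, which is absolutely continuous in $s$ with derivative $1_{z\le x_1+s}$ for a.e.~$s$; the fundamental theorem of calculus then yields $1_{z\le x_1+y_1}(x_1+y_1-z)-1_{z\le x_1}(x_1-z)=\int_0^{y_1}1_{z\le x_1+s}\,\dif s$, with $\int_0^{y_1}$ read as $-\int_{y_1}^0$ when $y_1<0$. Subtracting $\alpha y_1 = \int_0^{y_1}\alpha\,\dif s$ and rewriting $\int_0^{y_1}(1_{z\le x_1+s}-\alpha)\,\dif s = y_1(1_{z\le x_1}-\alpha)+\int_0^{y_1}(1_{z\le x_1+s}-1_{z\le x_1})\,\dif s$ gives (\ref{releq:incrementquant}). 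Finally, part c is obtained by telescoping: split $S(x_1+y_1,x_2+y_2;z)-S(x_1,x_2;z)$ as $\big(S(x_1+y_1,x_2+y_2;z)-S(x_1,x_2+y_2;z)\big)+\big(S(x_1,x_2+y_2;z)-S(x_1,x_2;z)\big)$; the second bracket is (\ref{releq:diff_s_2}) (with the first argument held fixed at $x_1$), while in the first bracket only $\rho_\alpha(\cdot;z)$ changes, so it equals $\big(1+\alpha^{-1}G(x_2+y_2)\big)\big(\rho_\alpha(x_1+y_1;z)-\rho_\alpha(x_1;z)\big)$ and part b applies. Adding the two pieces produces (\ref{releq:difference_s}).

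I do not anticipate a genuine obstacle here: the argument is entirely elementary. The only points needing a little care are the sign conventions for $\int_0^{y_1}$ and $\int_0^{y_2}$ when the increments are negative, and the remark that $s\mapsto(x_1+s-z)_+$ is absolutely continuous despite its kink at $s=z-x_1$, so that the fundamental theorem of calculus legitimately applies in part b.
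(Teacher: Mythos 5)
Your proposal is correct and follows essentially the same route as the paper: the paper also works from the separated representation (\ref{releq:s_rep_2}), its split into the terms $I)$ and $II)$ is exactly your telescoping decomposition for part c, its partial-integration identity $G(x_2+y_2)y_2-\mathcal{G}(x_2+y_2)+\mathcal{G}(x_2)=\int_0^{y_2}G'(x_2+s)s\,\dif s$ is your fundamental-theorem computation for part a, and part b is likewise obtained by integrating the a.e.\ derivative of $\rho_\alpha$ in its first argument. The only difference is presentational (the paper derives the joint increment first and reads off part a, while you build part c from parts a and b), so no changes are needed.
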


\begin{proof}[{\sl Proof of Lemma \ref{rellem:dif_s_reformulated}}]
From (\ref{releq:s_rep_2}), we have that
\begin{align}\label{relhilfslemma11}
	&S(x_1+y_1, x_2+y_2;z) - S(x_1,x_2;z) \nonumber \\
	&= \big(1+\alpha^{-1}G(x_2+y_2)\big)\,\big(1_{z\leq x_1+y_1}-\alpha\big)(x_1+y_1-z) + G(x_2+y_2)\big(x_2+y_2-z\big) \nonumber\\
	&\quad - \mathcal{G}(x_2+y_2) -\big(1+\alpha^{-1}G(x_2)\big)\, \big(1_{ z\leq x_1} - \alpha \big) (x_1-z) - G(x_2) \, \big(x_2 -z\big) + \mathcal{G}(x_2) \nonumber\\
	&= \big(1+\alpha^{-1}G(x_2+y_2)\big)\,\big(1_{z\leq x_1+y_1}-\alpha\big)(x_1+y_1-z) - \big(1_{z\leq x_1} - \alpha\big)(x_1-z) \nonumber\\
	&\quad -\alpha^{-1}G(x_2)\, \big(1_{ z \leq x_1} - \alpha \big) (x_1-z)  \nonumber\\
	&\quad + G(x_2+y_2)\big(x_2+y_2-z\big) - \mathcal{G}(x_2+y_2) - G(x_2) \, \big(x_2 -z\big) + \mathcal{G}(x_2) \nonumber\\
 &  = I) + II),
\end{align}
where 
\begin{align*}
	I ) &= \big(1+\alpha^{-1}G(x_2+y_2)\big)\,\big(\rho_{\alpha}(x_1+y_1;z) - \rho_{\alpha}(x_1;z)\big), \\[5pt]
	II ) & =  \alpha^{-1}G(x_2+y_2)\, \big(1_{ z \leq x_1} - \alpha \big) (x_1-z) -\alpha^{-1}G(x_2)\, \big(1_{ z \leq x_1} - \alpha \big) (x_1-z)  \\[5pt]
	&\quad + G(x_2+y_2)\big(x_2+y_2-z\big) - \mathcal{G}(x_2+y_2) - G(x_2) \, \big(x_2 -z\big) - \mathcal{G}(x_2),
\end{align*}
and where we subtracted and added the term $\big(1_{z\leq x_1}-\alpha\big)(x_1-z)\alpha^{-1}G(x_2+y_2)$ in (\ref{relhilfslemma11}). 

 Observe that $I) = 0$ for $y_1=0$ and hence
\begin{equation}\label{releq:prep_un_diff_S}
	II) = S(x_1,x_2+y_2; z) - S(x_1,x_2;z).
\end{equation}
In $II)$ the terms $G(x_2+y_2)z$ and $G(x_2)z$ cancel out. Rearranging gives 
\begin{align*}
	II) &=  G(x_2+y_2)\, \big(\alpha^{-1} 1_{ z \leq x_1} - 1 \big) x_1 - G(x_2)\, \big(\alpha^{-1} 1_{ z \leq x_1}- 1 \big) x_1\\[5pt]
	&\quad - G(x_2+y_2)\alpha^{-1}1_{z\leq x_1} z + G(x_2)\alpha^{-1}1_{z\leq x_1}z \\[5pt]
	&\quad + G(x_2+y_2)x_2- G(x_2)x_2 +G(x_2+y_2)y_2-\mathcal{G}(x_2+y_2) + \mathcal{G}(x_2)\\
&= \big( \big(\alpha^{-1} 1_{ z \leq x_1} - 1 \big) x_1 - \alpha^{-1}1_{z\leq x_1} z + x_2\big) \big(G(x_2+y_2)- G(x_2) \big)\\[5pt]
		&\quad +G(x_2+y_2)y_2-\mathcal{G}(x_2+y_2) + \mathcal{G}(x_2)\\[5pt]
		&= \big( x_2 - x_1 +\alpha^{-1} 1_{ z \leq x_1}\big(x_1-z \big)\big) \big(G(x_2+y_2)- G(x_2) \big)\\[5pt]
		&\quad +G(x_2+y_2)y_2-\mathcal{G}(x_2+y_2) + \mathcal{G}(x_2).
\end{align*}
By a partial integration \begin{equation*}
	G(x_2+y_2)y_2-\mathcal{G}(x_2+y_2) + \mathcal{G}(x_2) = \int_{0}^{y_2} G'(x_2+s)s \,\dif s,
\end{equation*}
which together with (\ref{releq:prep_un_diff_S}) implies (\ref{releq:diff_s_1}). A further partial integration gives (\ref{releq:diff_s_2}). 

To prove (\ref{releq:incrementquant}), note that by a partial integration, 
\begin{align*}
	 \rho_{\alpha}(x_1+y_1;z) - \rho_{\alpha}(x_1;z) & = \int_{x_1}^{x_1 + y_1} \big(1_{z\leq s}-\alpha \big)\,\dif s = \int_{0}^{y_1} \big(1_{z\leq x_1+s}-\alpha\big) \,\dif s\\
		& = -\alpha\,y_1 + \int_{0}^{y_1} 1_{z\leq x_1+s}\,\dif s \\
& = y_1\big(1_{z\leq x_1}-\alpha\big) + \int_0^{y_1} \big(1_{z\leq x_1+s}-1_{z\leq x_1}\big) \, \dif s
\end{align*}
where in the last equality we added and subtracted the term $y_1 1_{z\leq x_1}$.  

Finally, combining (\ref{relhilfslemma11}), (\ref{releq:diff_s_2}), (\ref{releq:prep_un_diff_S}) and (\ref{releq:incrementquant}) gives (\ref{releq:difference_s}).

\end{proof}

\begin{lemma}\label{relcor:prep_es_tight}
	Assume $q_{n}$ to be a consistent estimator of $q_{\alpha}$ and [B] to hold, then the sequence $\sqrt{n}(\widetilde{es}_{n, \alpha}-es_{\alpha})$ is tight, where $\widetilde{es}_{n, \alpha}$ is the minimizer of the function \begin{equation*}
	x_2 \mapsto\sum_{i=1}^{n} S(q_{n}, x_2; X_i) = n\mathbb{E}_n[S(q_n,x_2;Y)].
	\end{equation*}
In particular, if [A] and [B] hold, then $\sqrt{n}\big(\widehat{es}_{n,\alpha}-es_{\alpha}\big)$ is a tight sequence. 
\end{lemma}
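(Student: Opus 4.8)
The plan is to verify the hypotheses of the generalized rate-of-convergence result, Theorem~\ref{thm:nuisance_rate_of_conv}, applied with $(\Theta_0,d_0)=(\Theta_1,d_1)=(\R,|\cdot|)$, nuisance $\eta_0=q_\alpha$, target $\vartheta_0=es_\alpha$, criterion $m_{\eta,\vartheta}(y)=S(\eta,\vartheta;y)$, and exponents $\alpha=2$, $\beta=1$. With these choices $n^{1/(2(\alpha-\beta))}=\sqrt n$, so its conclusion $n^{1/(2(\alpha-\beta))}d_1(\widetilde{es}_{n,\alpha},es_\alpha)=O_{\mathcal{P}}^*(1)$ is exactly the claimed tightness; the ``in particular'' statement is then the special case $q_n=\widehat q_{n,\alpha}$ (which is consistent once [A] holds, since then $F$ is continuous and strictly increasing at $q_\alpha$), for which $\widetilde{es}_{n,\alpha}=\widehat{es}_{n,\alpha}$. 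Two of the four ingredients are immediate: consistency of $q_n$ is assumed and consistency of $\widetilde{es}_{n,\alpha}$ is Proposition~\ref{thm:emp_vers_cons}; and since $\widetilde{es}_{n,\alpha}$ is by construction the exact minimizer of $x_2\mapsto\mathbb{E}_n[S(q_n,x_2;Y)]$, the near-minimality requirement $\mathbb{E}_n[m_{q_n,\widetilde{es}_{n,\alpha}}(Y)]\le\mathbb{E}_n[m_{q_n,es_\alpha}(Y)]+O_{\mathcal{P}}(n^{-1})$ holds with zero remainder.

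It remains to establish the two uniform-in-$\eta$ estimates. For the curvature bound (\ref{eq:convrategen1}) I would put $\vartheta=es_\alpha+\varepsilon_2$ and insert (\ref{releq:diff_s_1}) of Lemma~\ref{rellem:dif_s_reformulated}, obtaining
\[
\mathbb{E}\big[S(\eta,es_\alpha+\varepsilon_2;Y)-S(\eta,es_\alpha;Y)\big]
=\big(G(es_\alpha+\varepsilon_2)-G(es_\alpha)\big)\,g(\eta)+\int_0^{\varepsilon_2}G'(es_\alpha+s)\,s\,\dif s,
\]
where $g(\eta)=\mathbb{E}[es_\alpha-\eta+\alpha^{-1}1_{Y\le\eta}(\eta-Y)]=\alpha^{-1}\big(\mathbb{E}[\rho_\alpha(\eta;Y)]-\mathbb{E}[\rho_\alpha(q_\alpha;Y)]\big)$. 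Here $g\ge 0$ by convexity of $\eta\mapsto\mathbb{E}[\rho_\alpha(\eta;Y)]$ together with its minimality at $q_\alpha$, $g(q_\alpha)=0$, and $\sup_{|\eta-q_\alpha|\le\varepsilon}g(\eta)\to 0$ as $\varepsilon\to 0$; rewriting the integral via (\ref{releq:diff_s_2}) bounds it below by $\tfrac12\varepsilon_2^2\big(G'(es_\alpha+\varepsilon_2)-\tfrac{C_0}{3}|\varepsilon_2|\big)$ with $C_0=\sup_{|x_2-es_\alpha|\le\delta_0}|G''(x_2)|<\infty$, which exceeds $C'\varepsilon_2^2$ for $\delta_0$ small because $G'(es_\alpha)>0$. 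Combining these, using $|\varepsilon_2|\ge\delta$ and controlling the small sign-indefinite cross term by $\sup_{|\eta-q_\alpha|\le\varepsilon}g(\eta)$ for $\varepsilon\le c_0$ small, yields the required lower bound $\ge C\delta^2$.

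For the modulus bound (\ref{eq:convrategen2}) I would use representation (\ref{releq:s_rep_2}) to write the increment $S(\eta,\vartheta;Y)-S(\eta,es_\alpha;Y)$ as $\alpha^{-1}\big(G(\vartheta)-G(es_\alpha)\big)\rho_\alpha(\eta;Y)+G(\vartheta)(\vartheta-Y)-G(es_\alpha)(es_\alpha-Y)-\big(\mathcal{G}(\vartheta)-\mathcal{G}(es_\alpha)\big)$. On the relevant compact neighbourhoods $G$, $G'$ and $\mathcal{G}$ are bounded, so this increment is Lipschitz in $(\eta,\vartheta)$ with a Lipschitz constant of order $1+1_{Y\le 0}|Y|$ (square-integrable under [B]) and has an envelope of order $\delta\,(1+1_{Y\le 0}|Y|)$ over $|\vartheta-es_\alpha|\le\delta$. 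A maximal inequality in terms of the bracketing integral (Chapter~19.2 of \citet{vdv2000asympstat}) then gives $\mathbb{E}^*\big[\sup_{|\eta-q_\alpha|\le\varepsilon}\sup_{|\vartheta-es_\alpha|\le\delta}|\mathbb{G}_n(S(\eta,\vartheta;Y)-S(\eta,es_\alpha;Y))|\big]\le C\delta$, once one has the $\delta$-free bound $\mathbb{E}^*[\sup_{|\eta-q_\alpha|\le\varepsilon}|\mathbb{G}_n(1_{Y\le\eta}(\eta-Y))|]\le K$, which follows from the same maximal inequality applied to the monotone, uniformly bounded-variation class $\{y\mapsto 1_{y\le\eta}(\eta-y):|\eta-q_\alpha|\le\varepsilon\}$.

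The main obstacle is exactly this pair of empirical-process estimates: getting the uniform-in-$\delta$ bound on $\mathbb{E}^*[\sup_{\eta}|\mathbb{G}_n(\cdot)|]$ sharp enough that (\ref{eq:convrategen2}) carries the factor $\delta^\beta=\delta$ rather than a constant, and, in (\ref{eq:convrategen1}), the careful bookkeeping that prevents the sign-indefinite cross term $\big(G(es_\alpha+\varepsilon_2)-G(es_\alpha)\big)g(\eta)$ from overwhelming the quadratic term $C'\varepsilon_2^2$ coming from the curvature of $\mathcal{G}$ in its second argument. Once both are established, Theorem~\ref{thm:nuisance_rate_of_conv} yields $\sqrt n\,(\widetilde{es}_{n,\alpha}-es_\alpha)=O_{\mathcal{P}}^*(1)$, and the ``in particular'' assertion is the case $q_n=\widehat q_{n,\alpha}$.
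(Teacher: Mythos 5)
You follow the paper's own route: Theorem~\ref{thm:nuisance_rate_of_conv} with $m_{\eta,\vartheta}(y)=S(\eta,\vartheta;y)$, $\alpha=2$, $\beta=1$, consistency from Proposition~\ref{thm:emp_vers_cons}, zero remainder for the exact minimizer, the increment identity \eqref{eq:diff_s_1}, and a bracketing maximal inequality reducing \eqref{eq:convrategen2} to the $\delta$-free bound \eqref{eq:expect_sup_bound}; all of this coincides with the paper's proof. The gap is the curvature condition \eqref{eq:convrategen1}, i.e.\ \eqref{eq:helprate1tech2} --- precisely the step you call the main obstacle and do not carry out. Write $g(\eta)=\mathbb{E}\big[es_\alpha-\eta+\alpha^{-1}1_{Y\le\eta}(\eta-Y)\big]=\alpha^{-1}\int_{q_\alpha}^{\eta}\big(F(t)-\alpha\big)\,\dif t\ge 0$; the unique minimizer of $x_2\mapsto\mathbb{E}[S(\eta,x_2;Y)]$ is $es_\alpha-g(\eta)$, and strict monotonicity of $F$ at $q_\alpha$ gives $g(\eta)>0$ for $\eta\neq q_\alpha$ nearby. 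Hence, for any fixed $\varepsilon>0$, choosing $\eta$ with $g(\eta)=\sup_{|\eta'-q_\alpha|\le\varepsilon}g(\eta')>0$ and $\vartheta=es_\alpha-\delta$ with $\delta<g(\eta)$ yields $\mathbb{E}[S(\eta,\vartheta;Y)-S(\eta,es_\alpha;Y)]<0$: the double infimum in \eqref{eq:helprate1tech2} is negative, not $\ge C\delta^2$, as soon as $\delta<\sup_{|\eta-q_\alpha|\le\varepsilon}g(\eta)$. Your own estimate $C'\varepsilon_2^2-C_1|\varepsilon_2|\sup_{|\eta-q_\alpha|\le\varepsilon}g(\eta)$ shows the same failure at $|\varepsilon_2|=\delta$, and shrinking $c_0$ cannot repair it, because in the proof of Theorem~\ref{thm:nuisance_rate_of_conv} the bound is invoked with $\varepsilon=\rho$ \emph{fixed} and $\delta=2^{j-1}/r_n$ arbitrarily small as $n\to\infty$. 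So the absorption you promise cannot be carried out in the form stated.

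That this is not a removable technicality is seen from what the argument would deliver: tightness of $\sqrt n(\widetilde{es}_{n,\alpha}-es_\alpha)$ for \emph{every} consistent $q_n$, which is false. For $F$ uniform on $[0,1]$, $\alpha=1/2$ and the deterministic consistent choice $q_n=1/2+n^{-1/10}$ one has $\mathbb{E}[\widetilde{es}_{n,1/2}]=q_n(1-q_n)=1/4-n^{-1/5}$, so $\sqrt n(\widetilde{es}_{n,1/2}-1/4)\to-\infty$ in probability. What rescues the assertion actually used later is the specific estimator: for $q_n=\widehat q_{n,\alpha}$ one has $F_n(\widehat q_{n,\alpha})=\lceil n\alpha\rceil/n$, hence $F(\widehat q_{n,\alpha})-\alpha=O_{\mathcal{P}}(n^{-1/2})$, and therefore $g(\widehat q_{n,\alpha})\le\alpha^{-1}|\widehat q_{n,\alpha}-q_\alpha|\,|F(\widehat q_{n,\alpha})-\alpha|=o_{\mathcal{P}}(n^{-1/2})$, which is exactly the control that makes the shifted centre $es_\alpha-g(q_n)$ harmless. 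For comparison, the paper's supplement disposes of the cross term by pulling the infimum over $\eta$ inside the product and using $\inf_{|\eta-q_\alpha|\le\varepsilon}g(\eta)=g(q_\alpha)=0$, a step valid only for $\varepsilon_2\ge 0$; your more careful bound with $\sup g$ in place of $\inf g$ exposes exactly this point. To close your proof you must therefore use quantitative information on $q_n$ (e.g.\ recentre the criterion at $es_\alpha-g(q_n)$, or couple $\varepsilon$ with $\delta$ in a modified rate theorem), or bypass the M-estimation machinery and obtain tightness of $\sqrt n(\widehat{es}_{n,\alpha}-es_\alpha)$ directly from the explicit representation in Proposition~\ref{lem:connect_to_emp_vers}; as written, the proposal does not establish \eqref{eq:convrategen1}.
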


\begin{proof}[{\sl Proof of Lemma \ref{relcor:prep_es_tight}}]

We shall check the assumptions of Theorem~\ref{relthm:nuisance_rate_of_conv} with $\alpha=2$, $\beta=1$, $d_0, d_1$ the Euclidean distance in $\R$ and the criterion function $m_{\eta, \vartheta}(y) = S(\eta, \vartheta; y)$. Consistency has been taken care of in Theorem~2 in the main paper.
	
Concerning (\ref{releq:convrategen1}) we need to prove that 
\begin{equation}\label{releq:helprate1tech2}
	\inf_{|\eta-q_{\alpha}|\leq \varepsilon} \inf_{\delta_0\geq |\vartheta-es_{\alpha}|\geq \delta} \mathbb{E}\left[S(\eta, \vartheta; Y)-S(\eta, es_{\alpha};Y)\right] \geq C\,\delta^2,
	\end{equation}
	$0 < \varepsilon \leq c_0 $, $0<\delta \leq \delta_0$ for some $C, \delta_0, c_0>0$.

	To this end, using (\ref{releq:diff_s_1}) in Lemma~\ref{rellem:dif_s_reformulated}, we get that \begin{align*}
	& \inf_{|\eta-q_{\alpha}|\leq \varepsilon} \inf_{\delta_0\geq |\varepsilon_2|\geq \delta} \mathbb{E}\left[S(\eta, es_{\alpha}+\varepsilon_2;Y)-S(\eta, es_{\alpha};Y)\right]\\
	=&\inf_{|\eta-q_{\alpha}|\leq \varepsilon} \inf_{\delta_0\geq |\varepsilon_2|\geq \delta} \Bigg[\big(G(es_{\alpha} + \varepsilon_2) - G(es_{\alpha})\big)\mathbb{E}\big(es_{\alpha} -\eta + \alpha^{-1}1_{Y\leq \eta}\big(\eta-Y\big)\big)\\
	&\qquad\qquad\qquad\qquad+\int_0^{\varepsilon_2}G'(es_{\alpha}+s)s\,\dif s \Bigg]\\
	=&\inf_{|\eta-q_{\alpha}|\leq \varepsilon} \inf_{\delta_0\geq |\varepsilon_2|\geq \delta} \Bigg[\big(G(es_{\alpha} + \varepsilon_2) - G(es_{\alpha})\big)\mathbb{E}\Big(es_{\alpha}-Y +\alpha^{-1} \Big(\big(1_{Y\leq \eta}-\alpha\big)\big(\eta-Y\big)\Big)\Big)\\
	&\qquad\qquad\qquad\qquad+\int_0^{\varepsilon_2}G'(es_{\alpha}+s)s\,\dif s \Bigg],\\
	&\geq \inf_{\delta_0\geq |\varepsilon_2|\geq \delta} \big(G(es_{\alpha} + \varepsilon_2) - G(es_{\alpha})\big) \inf_{|\eta-q_{\alpha}|\leq \varepsilon}\Big(es_{\alpha}-\mathbb{E}[Y] +\alpha^{-1} \mathbb{E}\Big[\big(1_{y\leq \eta}-\alpha\big)\big(\eta-y\big)\Big]\Big)\\
	&\qquad+\inf_{\delta_0\geq |\varepsilon_2|\geq \delta}\int_0^{\varepsilon_2}G'(es_{\alpha}+s)s\,\dif s.
	\end{align*} 
The function $\eta\mapsto \big(1_{y\leq \eta}-\alpha\big)\big(\eta-y\big)$ is convex, so that $\eta\mapsto \mathbb{E}\big(\big(1_{Y\leq \eta}-\alpha\big)\big(\eta-Y\big)\big)$ is convex as well, where the (unique) minimum is attained in $q_{\alpha}$ (score function of $q_{\alpha}$). But $es_{\alpha} -\mathbb{E}[Y] + \alpha^{-1}\mathbb{E}\big(\big(1_{Y\leq q_{\alpha}}-\alpha\big)\big(q_{\alpha}-Y\big)\big) \big) =0$ and thus the expression above is greater than (or equal to) 
	\begin{align*}
	  \inf_{\delta_0\geq |\varepsilon_2|\geq \delta} \int_0^{\varepsilon_2}G'(es_{\alpha}+s)s\,\dif s.
	\end{align*}
	The remaining integral is monotonically increasing (decreasing) for $\varepsilon_2>0$ ($<0$), whence the infimum is attained in $\pm\delta$. A partial integration then gives \begin{align*}
	\inf_{\delta_0\geq |\varepsilon_2|\geq \delta}  \int_0^{\varepsilon_2}G'(es_{\alpha}+s)s\,\dif s &=  \delta^2\Bigg(\frac{1}{2} G'(es_{\alpha}\pm\delta)- \delta^{-2}\frac{1}{2}\int_0^{\pm\delta}G''(es_{\alpha}+s)s^2 \,\dif s \Bigg)\\
	&\geq \frac{\delta^2}{2}\Bigg(G'(es_{\alpha} \pm \delta) - \delta^{-2}C\,\int_{0}^{\pm\delta}s^2 \,\dif s\Bigg)\\
	&\geq \frac{\delta^2}{2}\Big(G'(es_{\alpha} \pm \delta) - \delta \,C/3\Big)
	\end{align*}
	for $C=\sup_{x_2\in [es_{\alpha}-\delta_0,es_{\alpha}+\delta_0 ]} |G''(x_2)|<\infty$. Since $\lim_{\delta \to 0} G'(es_{\alpha}\pm \delta) - \delta \,C/3 = G'(es_{\alpha}) >0$ holds by assumption on $G'$ we can find $C'>0$ and $\delta_0>0$ with $G'(es_{\alpha}\pm \delta) - \delta \,C/3 \geq C'$ for every $\delta \leq \delta_0$. This proves (\ref{releq:helprate1tech2}). 
	
For (\ref{releq:convrategen2}) we require  
\begin{equation}\label{releq:helprate1tech}
	\mathbb{E}\Big[ \sup_{ \substack{\scriptscriptstyle |\eta-q_{\alpha}|\leq \varepsilon\\\scriptscriptstyle |\vartheta-es_{\alpha}|\leq \delta}} \Big|\sqrt{n}(\mathbb{E}_n-\mathbb{E})\big( S(\eta, \vartheta;Y)-S(\eta, es_{\alpha};Y)\big)\Big| \Big] \leq C \,\delta, \qquad 0 < \delta \leq \delta_0,
	\end{equation}
for some $\delta_0, C >0$. 	
	To see this inequality we use (\ref{releq:diff_s_1}) again and the fact that $G(es_{\alpha} + \varepsilon_2) - G(es_{\alpha})$ equals $\int_{0}^{\varepsilon_2}G'(es_{\alpha}+s)\,\dif s$ to obtain \begin{align*}
	&\mathbb{E}\left[\sup_{|\eta-q_{\alpha}|\leq \varepsilon} \sup_{|\varepsilon_2|\leq \delta} \big|\sqrt{n}(\mathbb{E}_n-\mathbb{E})\big(S(\eta, es_{\alpha}+\varepsilon_2;Y)-S(\eta,es_{\alpha};Y)\big)\big|\right]\\[5pt]
	&=\mathbb{E}\left[\sup_{|\eta-q_{\alpha}|\leq \varepsilon} \sup_{|\varepsilon_2|\leq \delta} \Big|\int_{0}^{\varepsilon_2}G'(es_{\alpha}+s)\,\dif s\,\sqrt{n}\big(\mathbb{E}_n-\mathbb{E}\big)\big(\alpha^{-1}1_{Y\leq \eta}(\eta-Y) \big) \Big|\right]
	\end{align*}
	(only the stochastic term remains). Since $G'>0$, the former expression does not exceed \begin{equation*}
	\int_{-\delta}^{\delta}G'(es_{\alpha}+s)\,\dif s\,\alpha^{-1}\mathbb{E}\left[\sup_{|\eta-q_{\alpha}|\leq \varepsilon}\big|\sqrt{n}(\mathbb{E}_n-\mathbb{E})(1_{Y\leq \eta}(\eta-Y))\big|\right].
	\end{equation*}
	Because the first integral fulfils \begin{equation*}
		\int_{-\delta}^{\delta}G'(es_{\alpha}+s)\,\dif s \leq 2\,\delta G'(\xi)
	\end{equation*}
	for some $\xi\in [es_{\alpha}-\delta_0, es_{\alpha} + \delta_0]$ by the mean value theorem, and $G'>0$, it is enough to show \begin{equation}\label{releq:expect_sup_bound}
		\mathbb{E}\left[\sup_{|\eta-q_{\alpha}|\leq \varepsilon}\big|\sqrt{n}(\mathbb{E}_n-\mathbb{E})(1_{Y\leq \eta}(\eta-Y))\big|\right] \leq K
	\end{equation}
	for some constant $K$ not depending on $\delta$. 
	
	To this end we will use a maximal inequality involving the bracketing integral (Definition in Chapter~19.2, \citet{vdv2000asympstat}). Observe for any $\eta\in [q_{\alpha}-\varepsilon, q_{\alpha}+\varepsilon]$ the inequality \begin{align*}
		\big|1_{x\leq \eta}(\eta-x)\big| &\leq 1_{x\leq q_{\alpha}+\varepsilon}(q_{\alpha}+\varepsilon-x).
	\end{align*}
Thus $1_{x\leq q_{\alpha}+\varepsilon}(q_{\alpha}+\varepsilon-x)=:H(x)$ is an envelope function for the (measurable) class of functions $\mathcal{H}:=\big\{x\mapsto 1_{x\leq \eta}(\eta-x) \,|\, \eta \in [q_{\alpha}-\varepsilon, q_{\alpha}+\varepsilon]\big\}$. Using Corollary~19.35, \citet{vdv2000asympstat}, we obtain \begin{equation*}
		\mathbb{E}\left[\sup_{|\eta-q_{\alpha}|\leq \varepsilon}\big|\sqrt{n}(\mathbb{E}_n-\mathbb{E})(1_{Y\leq \eta}(\eta-Y))\big|\right] \leq K_1\, \int_0^{C} \sqrt{\log N_{[\,]}(\delta,\mathcal{H},\Vert\cdot\Vert_{Y,2}) } \,\dif \delta
	\end{equation*}
	for some constant $K_1<\infty$ and $C=\Vert H\Vert_{Y,2}$, where $N_{[\,]}(\delta,\mathcal{H},\Vert\cdot\Vert_{Y,2})$ denotes the bracketing number with respect to the norm $\Vert f\Vert_{Y,2} = \big(\mathbb{E}[f(Y)^2]\big)^{\nicefrac{1}{2}}$ (see the beginning of Chapter~19.2, \citet{vdv2000asympstat}; note $C<\infty$ by [B]). Next observe that the class $\mathcal{H}$ fulfills a Lipschitz-condition, namely for any $\eta_1,\eta_2 \in [q_{\alpha}-\varepsilon, q_{\alpha}+\varepsilon]$ it holds that \begin{equation*}
		\big|1_{x\leq \eta_1}(\eta_1-x)-1_{x\leq \eta_2}(\eta_2-x)\big| \leq |\eta_1-\eta_2|.
	\end{equation*}
	As seen in Example~19.7, \citet{vdv2000asympstat}, there is a constant $K_2$ only depending on $\varepsilon$, such that the bracketing number satisfies \begin{equation*}
		N_{[\,]}(\delta,\mathcal{H},\Vert\cdot\Vert_{Y,2}) \leq \frac{2\,K_2\,\varepsilon}{\delta}
	\end{equation*}
	for any $0<\delta < 2\,\varepsilon$. Hence by partitioning the bracketing integral we are left with \begin{align*}
		\int_0^{C} \sqrt{\log N_{[\,]}(\delta,\mathcal{H},\Vert\cdot\Vert_{Y,2}) } \,\dif \delta &\leq \int_{0}^{2\,\varepsilon} \sqrt{\log \big(2\,K_2\,\varepsilon/\delta\big)} \,\dif \delta + K_3 \\
		&\leq \sqrt{2\,K_2\,\varepsilon} \,\int_{0}^{2\,\varepsilon} \delta^{-\nicefrac{1}{2}}\,\dif \delta + K_3\\
		&= 4\,\sqrt{K_2} \,\varepsilon + K_3. 
	\end{align*}
	Putting things together we have shown \begin{equation*}
			\mathbb{E}\left[\sup_{|\eta-q_{\alpha}|\leq \varepsilon}\big|\sqrt{n}(\mathbb{E}_n-\mathbb{E})(1_{Y\leq \eta}(\eta-Y))\big|\right] \leq K_1\big(4\,\sqrt{K_2} \,\varepsilon + K_3 \big),
	\end{equation*}
	what is (\ref{releq:expect_sup_bound}). 
\end{proof}


\subsection{Proofs of Theorems \ref{relcor:conv_estimator_mult} and \ref{relth:estspectralrisk}}

Before giving the proof of Theorem \ref{relcor:conv_estimator_mult}, we require some results from the proof of Theorem \ref{relthm:main_thm_es}. 

Write
\begin{align}\label{relprocessrepres}
\begin{split}
	&\sum_{i=1}^{n} S\big(q_{\alpha}+u_1/a_n, es_{\alpha}+u_2/\sqrt{n}; X_i)-S(q_{\alpha}, es_{\alpha}; X_i)\\
	=\, & \, \big(1+\alpha^{-1}G(es_{\alpha}+u_2/\sqrt{n})\big)V_n(u_1) + U_n(u_2),
\end{split}
\end{align}
where 
\begin{align}
	V_n(u_1) & = \frac{u_1}{a_n}\sum_{i=1}^{n}(1_{X_i\leq q_{\alpha}}-\alpha)+ \frac{1}{a_n}\int_0^{u_1}\Big(\sum_{i=1}^{n}1_{X_i\leq q_{\alpha}+t/a_n}-1_{X_i\leq q_{\alpha}}\Big) \, \dif t\notag \\
	 & = \sum_{i=1}^n \rho_{\alpha}\big(q_{\alpha}+u_1/a_n;X_i\big)-\rho_{\alpha}\big(q_{\alpha};X_i \big) \label{releq:provessvn}
\end{align}
and
\begin{align}
	U_n(u_2) & = \sqrt{n}\big(G(es_{\alpha}+u_2/\sqrt{n})-G(es_{\alpha})\big) \frac{1}{\sqrt{n}}\sum_{i=1}^{n}\big(es_{\alpha}-q_{\alpha}+\alpha^{-1}1_{X_i\leq q_{\alpha}}(q_{\alpha}-X_i)\big) \notag\\
	&\quad +\frac{u_2^2}{2}G'(es_{\alpha}+u_2/\sqrt{n})- \frac{1}{2\sqrt{n}}\int_0^{u_2} G''(es_{\alpha}+t/\sqrt{n})t^2\,\dif t\\
	& = \sum_{i=1}^{n} S(q_{\alpha},es_{\alpha}+\nicefrac{u_2}{\sqrt{n}};y) - S(q_{\alpha},es_{\alpha};y). \label{releq:provessun}
\end{align}

In Lemma 11 we showed that 

 \begin{align}
	\frac{a_n}{\sqrt{n}}V_n(u_1)&= \sqrt{n}\mathbb{E}_n\big[a_n\big(\rho_{\alpha}(q_{\alpha}+u_1/a_n;Y)-\rho_{\alpha}(q_{\alpha};Y)\big)\big]\notag\\
	&= \frac{u_1}{\sqrt{n}}\sum_{i=1}^{n}\big(1_{X_i\leq q_{\alpha}}-\alpha\big) + \int_0^{u_1}\sqrt{n}\big(F(q_{\alpha}+s/a_n)-F(q_{\alpha})\big)\,\dif s + o_{\mathcal{P}}(1).\label{releq:expansionprocess1}
	\end{align}

and that 

	\begin{align}
	&U_n(u_2)\notag\\
	&= u_2 \,G'(es_{\alpha}) \frac{1}{\sqrt{n}} \sum_{i=1}^{n} \big(es_{\alpha}-q_{\alpha}+\alpha^{-1}1_{X_i\leq q_{\alpha}}(q_{\alpha}-X_i)\big)+ \int_{0}^{u_2 }G'(es_{\alpha}+t/\sqrt{n})t\,\dif t + o_{\mathcal{P}}(1).\label{releq:expansionprocess2}
	\end{align}

\setcounter{theorem}{11}
\begin{lemma}\label{rellem:approxminimizer}
The processes $(U_n)$ and $U(u_2) = G'(es_{\alpha})\, \big(\, u_2\, W_2 + u_2^2\,  /2\big)$ have unique minimizers $({u}_{2,n})$ and $u_{2}^0$, and ${u}_{2,n} \Rightarrow u_{2}^0$. Moreover, we have that
	\begin{equation}\label{releq:minim_op_one}
	\sqrt{n}(\widehat{es}_{n,\alpha}-es_{\alpha})  = {u}_{2,n}+o_{\mathcal{P}}(1).
	\end{equation}
\end{lemma}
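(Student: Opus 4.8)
The plan is to prove the three assertions in turn --- uniqueness of the minimizers $u_{2,n}$ of $U_n$ and $u_2^0$ of $U$, the weak convergence $u_{2,n}\Rightarrow u_2^0$, and then the approximation (\ref{releq:minim_op_one}), which I would deduce from the decoupling lemma, Lemma \ref{lem:diff_between_minimizer}. For the minimizers: $U(u_2)=G'(es_\alpha)\big(u_2 W_2+u_2^2/2\big)$ is a quadratic in $u_2$ with strictly positive leading coefficient $G'(es_\alpha)/2$, so it has the unique minimizer $u_2^0=-W_2$. For $U_n$, I would use the representation (\ref{releq:provessun}): minimizing $u_2\mapsto U_n(u_2)$ is the same as minimizing $x_2\mapsto\sum_{i=1}^n S(q_\alpha,x_2;X_i)$, whose derivative --- by the computation in the proof of Proposition \ref{lem:connect_to_emp_vers} --- equals $G'(x_2)$ times the strictly increasing affine map $x_2\mapsto n x_2-n q_\alpha+\alpha^{-1}\sum_{i=1}^n 1_{X_i\leq q_\alpha}(q_\alpha-X_i)$; since $G'>0$ this derivative changes sign exactly once, so $U_n$ has the unique minimizer
\[
u_{2,n}=\sqrt{n}\Big(q_\alpha+\alpha^{-1}\mathbb{E}_n\big[1_{Y\leq q_\alpha}(Y-q_\alpha)\big]-es_\alpha\Big).
\]
Since $F(q_\alpha)=\alpha$ under the standing Assumption, one has $es_\alpha=q_\alpha+\alpha^{-1}\mathbb{E}\big[1_{Y\leq q_\alpha}(Y-q_\alpha)\big]$, hence $u_{2,n}=\alpha^{-1}\mathbb{G}_n\big[1_{Y\leq q_\alpha}(Y-q_\alpha)\big]$, which under [B] has finite variance, so the central limit theorem yields $u_{2,n}\Rightarrow\mathcal{N}\big(0,\Var(1_{Y\leq q_\alpha}(q_\alpha-Y)/\alpha)\big)$, i.e. $u_{2,n}\Rightarrow-W_2=u_2^0$.

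For (\ref{releq:minim_op_one}) I would apply Lemma \ref{lem:diff_between_minimizer} to
\[
M_n(u_2)=U_n(u_2)+\big(1+\alpha^{-1}G(es_\alpha+u_2/\sqrt{n})\big)\,V_n\big(a_n(\widehat q_{n,\alpha}-q_\alpha)\big)
\]
and
\[
M_n'(u_2)=U_n(u_2)+\big(1+\alpha^{-1}G(es_\alpha)\big)\,V_n\big(a_n(\widehat q_{n,\alpha}-q_\alpha)\big),
\]
so that $N_n=U_n$, the $u_2$-free remainder of $M_n'$ is $R_n=\big(1+\alpha^{-1}G(es_\alpha)\big)V_n\big(a_n(\widehat q_{n,\alpha}-q_\alpha)\big)$, and $N=U$ with the unique minimizer $u_2^0$. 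By (\ref{relprocessrepres}) the minimizer in $u_2$ of $M_n$ is $\sqrt{n}(\widehat{es}_{n,\alpha}-es_\alpha)$ and that of $M_n'$ is $u_{2,n}$. The hypotheses of Lemma \ref{lem:diff_between_minimizer} then reduce to: $U_n\Rightarrow U$ on compact sets, which is Lemma \ref{thm:conv_to_V_U}, (\ref{eq:processconvlem2}); tightness of $\sqrt{n}(\widehat{es}_{n,\alpha}-es_\alpha)$, which is Lemma \ref{relcor:prep_es_tight}; the convergence $u_{2,n}\Rightarrow u_2^0$ just shown; and the uniform approximation $\sup_{u_2\in K}\big|M_n(u_2)-M_n'(u_2)\big|=o_{\mathcal{P}}(1)$ for compact $K$, which is the only nontrivial point left.

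To verify the uniform approximation, note that $M_n(u_2)-M_n'(u_2)=\alpha^{-1}\big(G(es_\alpha+u_2/\sqrt{n})-G(es_\alpha)\big)V_n\big(a_n(\widehat q_{n,\alpha}-q_\alpha)\big)$. For $K\subset[-c_0,c_0]$, monotonicity of $G$ gives $\sup_{u_2\in K}\big|G(es_\alpha+u_2/\sqrt{n})-G(es_\alpha)\big|\leq G(es_\alpha+c_0/\sqrt{n})-G(es_\alpha-c_0/\sqrt{n})$, which is $O(1/\sqrt{n})$ since $\sqrt{n}$ times it tends to $2c_0 G'(es_\alpha)$; so it remains to prove $V_n\big(a_n(\widehat q_{n,\alpha}-q_\alpha)\big)=o_{\mathcal{P}}(\sqrt{n})$, equivalently, since $a_n\to\infty$, that $(a_n/\sqrt{n})V_n\big(a_n(\widehat q_{n,\alpha}-q_\alpha)\big)=O_{\mathcal{P}}(1)$. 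For this I would first establish that $a_n(\widehat q_{n,\alpha}-q_\alpha)$ is a tight sequence: this follows from the process convergence $(a_n/\sqrt{n})V_n\Rightarrow V$ on compacts in (\ref{eq:processconvlem1}), the convexity of the $V_n$ and of $V$, and the uniqueness of the minimizer of $V$ --- guaranteed by the form of $\psi_\alpha$ from Proposition \ref{prop:theassumptionA} --- via the convexity (argmin) lemma, Lemma~2.2 in \citet{davis1992mestimation}. Then, given $\varepsilon>0$, pick a compact $K_1$ with $\mathcal{P}\big(a_n(\widehat q_{n,\alpha}-q_\alpha)\in K_1\big)\geq 1-\varepsilon$; since $f\mapsto\inf_{K_1}f$ is continuous on $\ell^\infty(K_1)$, (\ref{eq:processconvlem1}) shows $\inf_{K_1}(a_n/\sqrt{n})V_n$ is tight, and on the event $\{a_n(\widehat q_{n,\alpha}-q_\alpha)\in K_1\}$ the quantity $(a_n/\sqrt{n})V_n\big(a_n(\widehat q_{n,\alpha}-q_\alpha)\big)$ is bounded below by $\inf_{K_1}(a_n/\sqrt{n})V_n$ and above by $0$ (the latter because $\widehat q_{n,\alpha}$ minimizes $x_1\mapsto\sum_i\rho_\alpha(x_1;X_i)$); letting $C\to\infty$ and then $\varepsilon\to0$ concludes.

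I expect the main obstacle to be exactly this last step. The point is that $M_n$ does not converge when normalized by any single rate, since the shift $V_n\big(a_n(\widehat q_{n,\alpha}-q_\alpha)\big)$ is only $O_{\mathcal{P}}(\sqrt{n}/a_n)$, which may diverge when $a_n$ grows slowly --- precisely the low-regularity regime this paper targets. The standard argmax-continuity theorem is therefore inapplicable to $M_n$ directly, which is why the proof must be routed through Lemma \ref{lem:diff_between_minimizer}; and verifying that lemma's hypothesis (\ref{eq:argmaxunifapprox}) forces one to control $V_n$ at the data-dependent point $a_n(\widehat q_{n,\alpha}-q_\alpha)$, which in turn requires the tightness of $a_n(\widehat q_{n,\alpha}-q_\alpha)$ established above.
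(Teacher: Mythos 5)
Your proposal is correct and follows essentially the same route as the paper's proof: the explicit minimizer $u_{2,n}$ of $U_n$ via the derivative computation, the application of Lemma \ref{lem:diff_between_minimizer} with exactly the same choices of $M_n$, $M_n'$ and $N_n=U_n$, and the final control of $V_n\big(a_n(\widehat q_{n,\alpha}-q_\alpha)\big)$ through tightness of $a_n(\widehat q_{n,\alpha}-q_\alpha)$ and of $\inf_{K_1}(a_n/\sqrt{n})V_n$. Your observation that the quantity is additionally bounded above by $0$ (since $\widehat q_{n,\alpha}$ minimizes $\sum_i\rho_\alpha(\cdot;X_i)$) even states the two-sided bound slightly more carefully than the paper's displayed inequality.
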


\begin{proof}[{\sl Proof of Theorem \ref{relcor:conv_estimator_mult}}]
We define the processes $V_n^m$ and $U_n^m$ as in (\ref{releq:provessvn}) and (\ref{releq:provessun}) for each $\alpha_m$, $m=1, \ldots, k$. Then the expansions (\ref{releq:expansionprocess1}) and (\ref{releq:expansionprocess2}) are valid for each $m$, and the covariance matrix in the joint normal distribution of the vector $\big(W_{1,1},W_{1,2}, \ldots , W_{k,1},W_{k,2}\big)$ in the limit processes $U^m$ and $V^m$, which are given by 
\[ 
V(u_1) =  u_1 W_1 + \int_0^{u_1} \psi_{\alpha}(t)\,\dif t,
\]
see Lemma 11 in the main paper,  is determined by 
\begin{align*}
	\mathbb{E} \big[\big(1_{Y\leq q_{\alpha_s}}-\alpha_s\big)\big(1_{Y\leq q_{\alpha_t}}-\alpha_t\big)\big]
=\alpha_s\wedge\alpha_t - \alpha_s\alpha_t,
	\end{align*}
	\begin{align*}
	&\mathbb{E}\Big[\big(es_{\alpha_s} - q_{\alpha_s}+\alpha_s^{-1} 1_{Y\leq q_{\alpha_s}}\big(q_{\alpha_s}-Y\big)\big)\big(es_{\alpha_t} - q_{\alpha_t}+\alpha_t^{-1} 1_{Y\leq q_{\alpha_t}}\big(q_{\alpha_t}-Y\big)\big) \Big]\\
		&=\alpha_t^{-1}\alpha_s^{-1} \mathbb{E}\Big[ 1_{Y\leq q_{\alpha_s}\wedge q_{\alpha_t}}\big(q_{\alpha_s}-Y\big) \big(q_{\alpha_t}-Y\big) \Big]
		 \ + \big(es_{\alpha_s} - q_{\alpha_s} \big) \big(es_{\alpha_t} - q_{\alpha_t} \big)\\
		&= \frac{\alpha_s\wedge\alpha_t}{\alpha_s\alpha_t}\big(q_{\alpha_s}q_{\alpha_t}-(q_{\alpha_s}+q_{\alpha_t})es_{\alpha_s\wedge\alpha_t}\big) + \alpha_t^{-1}\alpha_s^{-1} \mathbb{E}\big[1_{Y\leq q_{\alpha_s}\wedge q_{\alpha_t}}Y^2\big]\\
		& \quad + \big(es_{\alpha_s} - q_{\alpha_s} \big) \big(es_{\alpha_t} - q_{\alpha_t} \big)
	\end{align*}
	and
	\begin{align*}
& \mathbb{E}\Big[\big(1_{Y\leq q_{\alpha_s}}-\alpha_s\big)\big(es_{\alpha_t} - q_{\alpha_t}+\alpha_t^{-1} 1_{Y\leq q_{\alpha_t}}\big(q_{\alpha_t}-Y\big)\big)\Big]\\
		= &\, \alpha_t^{-1}\mathbb{E}\Big[\big(1_{Y\leq q_{\alpha_s}}-\alpha_s\big)\big(q_{\alpha_t}-Y\big)1_{Y\leq q_{\alpha_t}}\Big]\\
		= & \, \frac{\alpha_t\wedge\alpha_s}{\alpha_t}\big(q_{\alpha_t}- es_{\alpha_s\wedge \alpha_t}\big) - \alpha_s \big(q_{\alpha_t} -  es_{\alpha_t}\big)
	\end{align*}
where $s,t\in\{1,\ldots , k\}$. Further, Lemma \ref{rellem:approxminimizer} also holds true for each $m$. As in step 5 of the proof of Theorem \ref{relthm:main_thm_es} in the main paper, we may then consider the sequence of minimizers of the processes
\begin{equation*}
	Z_{n,mult} (v_1,u_1,\ldots , v_k,u_k) = \sum_{m=1}^{k}\big(\big(1+\alpha_m^{-1}G_i(es_{\alpha_m})\big) \frac{a_{m,n}}{\sqrt{n}} V_n^m(v_m) + U_n^m(u_m)\big),
\end{equation*}
which converge to the process
\begin{equation*}
	Z_{mult} (v_1,u_1,\ldots , v_k,u_k) = \sum_{m=1}^{k}\big(\big(1+\alpha_m^{-1}G_i(es_{\alpha_m})\big)  V^m(v_m) + U^m(u_m)\big),
\end{equation*}
and apply the argmax-continuity theorem to obtain the result. 
\end{proof}

\begin{proof}[{\sl Proof of Theorem \ref{relth:estspectralrisk}}]
The formula (\ref{releq:specrisk}) together with Theorem \ref{relcor:conv_estimator_mult} immediately imply the second statement of the theorem. 
Concerning (\ref{releq:specrisk}), setting

\begin{align*}
		g_m(x_{k+1}) = 1+\frac{p_m}{\alpha_m}G(x_{k+1}),  \quad 
		h(x_{k+1};z)=G(x_{k+1})(x_{k+1}-z)-\mathcal{G}(x_{k+1})
	\end{align*}
we have that
\begin{align*}
		\big(\widehat{q}_{n,\alpha_1}, \ldots , \widehat{q}_{n,\alpha_k}, \widehat{\nu}_{\mu,n}\big) &\in\argmin_{x_1, \ldots , x_{k+1} \in\R} \sum_{m=1}^{k}\sum_{i=1}^{n}\Big[g_m(x_{k+1})\rho_{\alpha_m}(x_m;Y_i) + p_m\,h(x_{k+1};Y_i)\Big].
	\end{align*}

	For the minimal value we have \begin{align*}
		&= \min_{x_{k+1}\in\R} \sum_{m=1}^{k}\Big[g_m(x_{k+1}) \Big(\min_{x_m\in\R} \sum_{i=1}^{n}\rho_{\alpha_m}(x_m;Y_i)\Big) + p_m\sum_{i=1}^{n}\,h(x_{k+1};Y_i)\Big],
	\end{align*}
	so the minimizer in $x_m$ does not depend on $x_l$, $l\ne m$ and is actually given by  
$		\widehat{q}_{n,\alpha_m}$. It remains to find the minimizer 
	of the function 
	\begin{equation*}
		x_{k+1}\mapsto \sum_{m=1}^{k}\Big[\sum_{i=1}^{n}g_m(x_{k+1})\rho_{\alpha_m}(\widehat{q}_{n,\alpha_m};Y_i) + p_m\,h(x_{k+1};Y_i)\Big].
	\end{equation*}
	Differentiation of the maps $g_m$ and $h$ gives \begin{align*}
		\partial_{x_{k+1}} g_m(x_{k+1}) = \frac{p_m}{\alpha_m}G'(x_{k+1}), \quad
		\partial_{x_{k+1}} h(x_{k+1};z) &= G'(x_{k+1})(x_{k+1}-z),
	\end{align*}
so that minimizing the above function is equivalent to solving \begin{align*}
		0
		&= G'(x_{k+1})\sum_{m=1}^{k} p_m\Big[ n\,x_{k+1}- n\,\widehat{q}_{n,\alpha_m} +\alpha_m^{-1}\sum_{i=1}^{n}1_{Y_i\leq\widehat{q}_{n,\alpha_m} }\big(\widehat{q}_{n,\alpha_m}-Y_i\big) \Big]
	\end{align*}
	for $x_{k+1}$, which results in 
	\begin{equation*}
		\widehat{\nu}_{\mu,n} = \sum_{m=1}^{k}p_m\Bigg[\widehat{q}_{n,\alpha_m} - \frac{1}{n\,\alpha_m}\sum_{i=1}^{n}1_{Y_i\leq \widehat{q}_{n,\alpha_m}}\big(\widehat{q}_{n,\alpha_m}-Y_i\big)\Bigg].
	\end{equation*}
The formula 
\begin{align}\label{releq:esscoringexpl}
\begin{split}		
		\widehat{es}_{n,\alpha} 
		& = \alpha^{-1}\mathbb{E}_n\big(Y \,1_{Y\leq \widehat{q}_{n,\alpha}}\big) + \widehat{q}_{n,\alpha} \, \big(1 - \frac{1}{\alpha\, n}\sum_{i=1}^{n}1_{X_i\leq \widehat{q}_{n,\alpha}} \big)
\end{split}
	\end{align}
for the expected shortfall then implies (\ref{releq:specrisk}). 
\end{proof}


\section{A general result on rates of convergence}

The rates of convergence will be proved using the next theorem, which is a generalization of Theorem~5.52, \citet{vdv2000asympstat}, and similar to his  Theorem 5.23. We will provide a proof for convenience. Assume that $(\Theta_0, d_0), (\Theta_1, d_1)$ are metric spaces and that for all $\eta\in \Theta_0$, $\vartheta\in\Theta_1$, the map $y\mapsto m_{\eta, \vartheta}(y)$ is measurable.  To unify notation, we will use $\mathbb{E}_n$ and $Y$ in the formulation of the theorem, but note that $Y$ here could also have a more general form (not needing a finite first moment or $Y$ to be real).

\begin{theorem}\label{relthm:nuisance_rate_of_conv}
	Assume that for fixed $C$ and $\alpha>\beta$, every $n\in\N$ and all sufficiently small $\varepsilon,\delta>0$ it holds that 
	\begin{align}\label{releq:convrategen1}
	\inf_{d_0(\eta, \eta_0)\leq \varepsilon} \inf_{d_1(\vartheta, \vartheta_0)\geq \delta} \mathbb{E}\big[m_{\eta,\vartheta}(Y) - m_{\eta, \vartheta_0}(Y)\big] \geq C\delta^{\alpha}
	\end{align}
	and 
	\begin{equation}\label{releq:convrategen2}
	\mathbb{E}^*\Big[\sup_{d_0(\eta,\eta_0)\leq \varepsilon}\sup_{d_1(\vartheta, \vartheta_0)\leq \delta} \big|\sqrt{n}(\mathbb{E}_n-\mathbb{E})\big(m_{\eta, \vartheta}(Y) - m_{\eta, \vartheta_0}(Y)\big)\big|\Big] \leq C\delta^{\beta}. 
	\end{equation}
	Additionally suppose that $\eta_n$ converges to $\eta_0$ in (outer) probability and $\widehat{\vartheta}_n$ converges to $\vartheta_0$ in (outer) probability and fulfils \begin{equation*}
	\mathbb{E}_n\big[m_{\eta_n,\widehat{\vartheta}_n}(Y)\big]\leq  \mathbb{E}_n\big[m_{\eta_n, \vartheta_0}(Y)\big] + O_{\mathcal{P}}(n^{\nicefrac{\alpha}{(2(\beta - \alpha))}}).
	\end{equation*}
	Then $n^{\nicefrac{1}{(2(\alpha-\beta))}}d_1(\widehat{\vartheta}_n,\vartheta_0) = O_{\mathcal{P}}^*(1)$. 
\end{theorem}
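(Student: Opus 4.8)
The plan is to run the standard peeling (``shelling'') argument behind Theorem~5.52 of \citet{vdv2000asympstat}, adapted to the two exponents $\alpha$ (curvature) and $\beta$ (modulus of continuity of the empirical process). Set $r_n = n^{1/(2(\alpha-\beta))}$; since $\beta<\alpha$ we have $r_n\to\infty$, and the slack in the near-minimizing hypothesis can be rewritten as $n^{\alpha/(2(\beta-\alpha))} = r_n^{-\alpha}$, i.e.\ exactly the scale $\delta^{\alpha}$ appearing in (\ref{releq:convrategen1}) evaluated at $\delta\sim 1/r_n$. This balancing is what makes the two hypotheses fit together. Fix $\varepsilon,\delta_0>0$ small enough that (\ref{releq:convrategen1}) and (\ref{releq:convrategen2}) hold on $\{d_0(\eta,\eta_0)\le\varepsilon\}$ and for all $\delta\le\delta_0$. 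By the assumed consistency, the events $A_n = \{d_0(\eta_n,\eta_0)\le\varepsilon\}$ and $B_n=\{d_1(\widehat\vartheta_n,\vartheta_0)\le\delta_0\}$ have outer probability tending to $1$; writing the slack as $R_n\,r_n^{-\alpha}$ with $R_n = O_{\mathcal P}(1)$, fix also a constant $L$ with $\limsup_n\mathcal P^*(R_n>L)<\epsilon$.

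Next I would fix an integer $J$ — to be chosen depending only on $\epsilon$ and on $C,\alpha,\beta,L$, \emph{not} on $n$ — and decompose $\{r_n\,d_1(\widehat\vartheta_n,\vartheta_0)>2^J\}\cap A_n\cap B_n\cap\{R_n\le L\}$ into the shells $S_{j,n} = \{2^{j-1}<r_n\,d_1(\widehat\vartheta_n,\vartheta_0)\le 2^j\}$, $j>J$; for each fixed $n$ only finitely many of these meet $B_n$, and on each of them $\delta := 2^{j-1}/r_n\le\delta_0$. On $S_{j,n}\cap A_n$, hypothesis (\ref{releq:convrategen1}) gives $\mathbb E[m_{\eta_n,\widehat\vartheta_n}(Y)-m_{\eta_n,\vartheta_0}(Y)]\ge C\,2^{(j-1)\alpha}r_n^{-\alpha}$, while on $\{R_n\le L\}$ the near-minimizing property gives $\mathbb E_n[m_{\eta_n,\widehat\vartheta_n}(Y)-m_{\eta_n,\vartheta_0}(Y)]\le L\,r_n^{-\alpha}$. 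Subtracting and choosing $J$ so large that $C\,2^{(J-1)\alpha}\ge 2L$, the centred process obeys, on $S_{j,n}\cap A_n\cap\{R_n\le L\}$,
\[
\sup_{d_1(\vartheta,\vartheta_0)\le 2^j/r_n}\bigl|(\mathbb E_n-\mathbb E)\bigl(m_{\eta_n,\vartheta}(Y)-m_{\eta_n,\vartheta_0}(Y)\bigr)\bigr|\;\ge\;\tfrac{C}{2}\,2^{(j-1)\alpha}r_n^{-\alpha}.
\]
Then Markov's inequality for outer expectations together with (\ref{releq:convrategen2}) (taken with $\delta = 2^j/r_n$, using $d_0(\eta_n,\eta_0)\le\varepsilon$ on $A_n$) yields
\[
\mathcal P^*\bigl(S_{j,n}\cap A_n\cap\{R_n\le L\}\bigr)\;\le\;\frac{C\,(2^j/r_n)^{\beta}/\sqrt n}{\tfrac{C}{2}\,2^{(j-1)\alpha}r_n^{-\alpha}}\;=\;2^{1+\alpha}\,\frac{r_n^{\alpha-\beta}}{\sqrt n}\,2^{j(\beta-\alpha)}\;=\;2^{1+\alpha}\,2^{j(\beta-\alpha)},
\]
the last equality being the identity $r_n^{\alpha-\beta}=\sqrt n$. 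Summing this geometric series over $j>J$ (convergent since $\beta-\alpha<0$) gives $\sum_{j>J}\mathcal P^*(S_{j,n}\cap A_n\cap\{R_n\le L\})\le K\,2^{J(\beta-\alpha)}$ for a constant $K=K(\alpha,\beta)$, which is $<\epsilon$ once $J$ is large. Adding $\mathcal P^*(A_n^c)+\mathcal P^*(B_n^c)+\mathcal P^*(R_n>L)$, each $<\epsilon$ for all large $n$, gives $\limsup_n\mathcal P^*\bigl(r_n\,d_1(\widehat\vartheta_n,\vartheta_0)>2^J\bigr)\le 4\epsilon$; since $\epsilon$ was arbitrary this is exactly $n^{1/(2(\alpha-\beta))}\,d_1(\widehat\vartheta_n,\vartheta_0)=O_{\mathcal P}^*(1)$.

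The argument needs no separability or compactness of $\Theta_1$, because the modulus bound (\ref{releq:convrategen2}) is imposed directly; correspondingly no maximal inequality or bracketing estimate enters here (those arise only when (\ref{releq:convrategen2}) is verified in applications, cf.\ the proof of Lemma~\ref{relcor:prep_es_tight}). The only points that require care rather than computation are the bookkeeping with outer probability and outer expectation — the suprema over $\vartheta$ need not be measurable, so one works throughout with $\mathcal P^*$, $\mathbb E^*$ and their finite subadditivity and monotonicity — and ensuring that the shell cutoff $J$ depends on $\epsilon$ and the fixed constants alone, so that the $\limsup$ in $n$ can legitimately be taken. Neither is a genuine obstacle; the entire ``engine'' of the proof is the exponent identity $r_n^{\alpha-\beta}=\sqrt n$.
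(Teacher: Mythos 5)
Your proposal is correct and is essentially the paper's own proof: the same peeling over shells $S_{j,n}$ at scale $r_n=n^{1/(2(\alpha-\beta))}$, the same use of (\ref{releq:convrategen1}) to lower-bound the population contrast on each shell, the subtraction against the near-minimizing property to force the centred empirical process to be large, Markov's inequality with (\ref{releq:convrategen2}), and the geometric series driven by $r_n^{\alpha-\beta}=\sqrt n$; the only cosmetic differences are that the paper takes infima over $B_\rho(\eta_0)\times S_{j,n}$ rather than evaluating at $(\eta_n,\widehat\vartheta_n)$, and it defines the consistency event with radius $\rho/2$ so that the sup radius $2^j/r_n$ stays below $\rho$ (in your version one should shrink $\delta_0$ by a factor $2$, a trivial adjustment).
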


\begin{proof}[{\sl Proof of Theorem~\ref{relthm:nuisance_rate_of_conv}}]
	
	We set $r_n =n^{\nicefrac{1}{(2\alpha-2\beta)}}$ and suppose, that $\widehat{\vartheta}_n$ minimises the map $\vartheta\mapsto \mathbb{E}_n\big[m_{\eta_n,\vartheta}(Y)\big]$ up to a random variable $R_n = O_{\mathcal{P}}(r_n^{-\alpha})$. 
	
	For each $n$ the set $\Theta_1\setminus \{\vartheta_0\}$ can be partitioned into the sets \begin{equation*}
	S_{j,n}=\big\{ \vartheta \,|\, 2^{j-1}< r_n d_1(\vartheta, \vartheta_0) \leq 2^j \big\}, \quad j\in\Z. 
	\end{equation*} 
	If $r_n d_1(\widehat{\vartheta}_n, \vartheta_0) >2^M$ for some $M\in\Z$, then $\widehat{\vartheta}_{n}$ must be in one of the $S_{j,n}$ for $j\geq M$. Further, if $\rho >0$ and $d_1(\widehat{\vartheta}_n,\vartheta_0)\leq \rho/2$ then $\widehat{\vartheta}_n\in S_{j,n}$ for $2^j \leq \rho\, r_n$. This gives \begin{align*}
	\mathcal{P}^*\big(r_n d_1(\vartheta, \vartheta_0) >2^M\big) &\leq \mathcal{P}^*\Bigg(\bigcup_{\substack{\scriptscriptstyle j\geq M \\ \scriptscriptstyle 2^j\leq \rho\,r_n} } \{\widehat{\vartheta}_n\in S_{j,n} \}\cap \{d_1(\widehat{\vartheta}_n, \vartheta_0)\leq \rho/2\} \cap \{d_0(\eta_n,\eta_0)\leq \rho \}\Bigg)\\
	&\quad + \mathcal{P}^*\big(2d_1(\widehat{\vartheta}_n, \vartheta_0) >\rho\big) + \mathcal{P}^*\big(d_0(\eta_n,\eta_0)>\rho\big).
	\end{align*}
	
	Assume $\widehat{\vartheta}_n\in S_{j,n}$ for a $j$ involved in the above union. Then by assumption on $\widehat{\vartheta}_n$ the infimum of the map $\vartheta \mapsto \mathbb{E}_n\big[m_{\eta_n,\vartheta}(Y)-m_{\eta_n, \vartheta_0}(Y)\big]$ over $S_{j,n}$ is at most $R_n$. If we suppose in addition that $d_0(\eta_n,\eta_0)\leq \rho$ holds, then the infimum of $(\eta,\vartheta)\mapsto \mathbb{E}_n\big[m_{\eta,\vartheta}(Y)-m_{\eta, \vartheta_0}(Y) \big]$ over $B_{\rho}(\eta_0)\times S_{j,n}$ is smaller than $R_n$ as well. Hence if $r_n^{\alpha}R_n \leq C'$ for some $C'<\infty$, this infimum is smaller than $C'/r_n^{\alpha}$. Thus \begin{align}
	\mathcal{P}^*\big(r_n d_1(\vartheta, \vartheta_0) >2^M\big) &\leq \sum_{\substack{\scriptscriptstyle j\geq M \\ \scriptscriptstyle 2^j\leq \rho\,r_n} } \mathcal{P}^*\Big( \inf_{\eta \in B_{\rho}(\eta_0)} \inf_{\vartheta\in S_{j,n}} \mathbb{E}_n\big[m_{\eta,\vartheta}(Y)-m_{\eta, \vartheta_0}(Y) \big] \leq C'/r_n^{\alpha} \Big) \notag\\
	&\quad +  \mathcal{P}^*\big(2d_1(\widehat{\vartheta}_n, \vartheta_0) >\rho\big) + \mathcal{P}^*\big(d_0(\eta_n,\eta_0)>\rho\big) + \mathcal{P}^*\big( r_n^{\alpha}R_n > C'\big).\label{releq:every_is_small}
	\end{align}	
	Observe that the last three summands can be made small for any $\rho>0$ by choosing $n$ and $C'$ big enough ($\eta_n\to\eta$, $\widehat{\vartheta}_n\to\vartheta_0$ in (outer) probability, $R_n = O_{\mathcal{P}}(r_n^{-\alpha})$). 
	
	Now choose $\rho>0$ small enough to ensure that the conditions of the theorem hold for all $\delta, \varepsilon \leq \rho$. Every $j$ involved in the above sum does fulfils $2^{j} /r_n \leq \rho$, whence from the first assumption (\ref{releq:convrategen1})
	\begin{align}
	&\inf_{\eta \in B_{\rho}(\eta_0)} \inf_{\vartheta\in S_{j,n}} \mathbb{E}\big[m_{\eta,\vartheta}(Y)-m_{\eta, \vartheta_0}(Y)  \big]\notag\\
	&\geq \inf_{\eta \in B_{\rho}(\eta_0)} \inf_{\rho \geq d_1(\vartheta, \vartheta_0) \geq 2^{j-1}/r_n} \mathbb{E}\big[m_{\eta,\vartheta}(Y)-m_{\eta, \vartheta_0}(Y) \big]\notag\\
	&\geq C \Big(\frac{2^{j-1}}{r_n}\Big)^{\alpha}.\label{releq:add_at_least}
	\end{align}
	Hence 
	\begin{align*}
	&\sum_{\substack{\scriptscriptstyle j\geq M \\ \scriptscriptstyle 2^j\leq \rho\,r_n} } \mathcal{P}^*\Big( \inf_{\eta \in B_{\rho}(\eta_0)} \inf_{\vartheta\in S_{j,n}} \mathbb{E}_n\big[m_{\eta,\vartheta}(Y)-m_{\eta, \vartheta_0}(Y) \big] \leq C'/r_n^{\alpha} \Big)\\
	&= \sum_{\substack{\scriptscriptstyle j\geq M \\ \scriptscriptstyle 2^j\leq \rho\,r_n} } \mathcal{P}^*\Big( \inf_{\eta \in B_{\rho}(\eta_0)} \inf_{\vartheta\in S_{j,n}} \Big[(\mathbb{E}_n-\mathbb{E})\big[m_{\eta,\vartheta}(Y)-m_{\eta, \vartheta_0}(Y) \big] +\mathbb{E}\big[m_{\eta,\vartheta}(Y)-m_{\eta, \vartheta_0}(Y) \big]\Big]\leq C'/r_n^{\alpha} \Big)\\
	& \leq \sum_{\substack{\scriptscriptstyle j\geq M \\ \scriptscriptstyle 2^j\leq \rho\,r_n} } \mathcal{P}^*\Big( \inf_{\eta \in B_{\rho}(\eta_0)} \inf_{\vartheta\in S_{j,n}} \Big[(\mathbb{E}_n-\mathbb{E})\big[m_{\eta,\vartheta}(Y)-m_{\eta, \vartheta_0}(Y) \big]\Big] \\
	&\qquad\qquad\qquad\qquad\qquad\qquad+ \inf_{\eta \in B_{\rho}(\eta_0)} \inf_{\vartheta\in S_{j,n}} \mathbb{E}\big[m_{\eta,\vartheta}(Y)-m_{\eta, \vartheta_0}(Y) \big] \leq C'/r_n^{\alpha} \Big)\\[7pt]
	&\leq \sum_{\substack{\scriptscriptstyle j\geq M \\ \scriptscriptstyle 2^j\leq \rho\,r_n} } \mathcal{P}^*\Big( \inf_{\eta \in B_{\rho}(\eta_0)} \inf_{\vartheta\in S_{j,n}} (\mathbb{E}_n-\mathbb{E})\big[m_{\eta,\vartheta}(Y)-m_{\eta, \vartheta_0}(Y) \big) \leq \big(C'-C\,2^{(j-1)\alpha}\big)/r_n^{\alpha}\Big),
	\end{align*}
	where the last inequality uses (\ref{releq:add_at_least}). Choose $M$ large enough to guarantee $C'\leq C\,2^{(M-1)\alpha-1}$, so that \begin{equation*}
	C'- C\, 2^{(j-1)\alpha} \leq C\,2^{(j-1)\alpha-1}-C\,2^{(j-1)\alpha} = -C\,2^{(j-1)\alpha-1}
	\end{equation*} holds for $j\geq M$. This means that the former sum does not exceed \begin{equation*}
	\sum_{\substack{\scriptscriptstyle j\geq M \\ \scriptscriptstyle 2^j\leq \rho\,r_n} } \mathcal{P}^*\Big( \inf_{\eta \in B_{\rho}(\eta_0)} \inf_{\vartheta\in S_{j,n}} (\mathbb{E}_n-\mathbb{E})\big[m_{\eta,\vartheta}(Y)-m_{\eta, \vartheta_0}(Y) \big) \leq -C\,2^{(j-1)\alpha}/(2\,r_n^{\alpha})\Big).
	\end{equation*}
	By taking absolute values and multiplying with $\sqrt{n}$ this expression is smaller than \begin{equation*}
	\sum_{\substack{\scriptscriptstyle j\geq M \\ \scriptscriptstyle 2^j\leq \rho\,r_n} } \mathcal{P}^*\Big( \sup_{\eta \in B_{\rho}(\eta_0)} \sup_{\vartheta\in B_{2^j/r_n}(\vartheta_0)}\Big| \sqrt{n}(\mathbb{E}_n-\mathbb{E})\big[m_{\eta,\vartheta}(Y)-m_{\eta, \vartheta_0}(Y)\big]\Big| \geq C \sqrt{n} \frac{2^{(j-1)\alpha}}{r_n^{\alpha}}\Big).
	\end{equation*}
	Due to Markov's inequality and the second assumption (\ref{releq:convrategen2}) this term is finally not bigger than \begin{align*}
	&\sum_{\substack{\scriptscriptstyle j\geq M \\ \scriptscriptstyle 2^j\leq \rho\,r_n } } \frac{r_n^{\alpha}}{C \sqrt{n} 2^{(j-1)\alpha}} \mathbb{E}^*\Big[\sup_{d_0(\eta,\eta_0)\leq \rho}\sup_{d_1(\vartheta, \vartheta_0)\leq 2^j/r_n } \big|\sqrt{n}(\mathbb{E}_n-\mathbb{E})\big[m_{\eta, \vartheta}(Y) - m_{\eta, \vartheta_0}(Y)\big]\big|\Big]\\
	&\leq \sum_{\substack{\scriptscriptstyle j\geq M \\ \scriptscriptstyle 2^j\leq \rho\,r_n} } \frac{\big(\nicefrac{2^j}{r_n}\big)^{\beta}r_n^{\alpha}}{ \sqrt{n} 2^{(j-1)\alpha}} = 2^{\alpha} \sum_{\substack{\scriptscriptstyle j\geq M \\ \scriptscriptstyle 2^j\leq \rho\,r_n} } \frac{n^{\nicefrac{(\alpha-\beta)}{2(\alpha-\beta)}}}{\sqrt{n}2^{j(\alpha-\beta)}} = 2^{\alpha} \sum_{\substack{\scriptscriptstyle j\geq M \\ \scriptscriptstyle 2^j\leq \rho\,r_n} } \frac{1}{2^{j(\alpha-\beta)}} \leq 2^{\alpha} \sum_{j\geq M} \frac{1}{2^{j(\alpha-\beta)}}.
	\end{align*}	
	The last series can be made small by taking $M$ big enough since $\alpha>\beta$. Hence every summand in (\ref{releq:every_is_small}) can be made small and thus the theorem is proven.  
\end{proof}

\bibliographystyle{chicago}
\nocite{*}
\bibliography{supp}

\begin{thebibliography}{}

\bibitem[\protect\citeauthoryear{Artzner, Delbaen, Eber, and Heath}{Artzner
  et~al.}{1999}]{artzner1999}
Artzner, P., F.~Delbaen, J.-M. Eber, and D.~Heath (1999).
\newblock Coherent measures of risk.
\newblock {\em Mathematical Finance\/}~{\em 9\/}(3), 203--228.

\bibitem[\protect\citeauthoryear{{Bahadur}}{{Bahadur}}{1966}]{bahadur}
{Bahadur}, R. (1966).
\newblock {A note on quantiles in large samples.}
\newblock {\em {Ann. Math. Stat.}\/}~{\em 37}, 577--580.

\bibitem[\protect\citeauthoryear{Cai and Wang}{Cai and Wang}{2008}]{cai2008}
Cai, Z. and X.~Wang (2008).
\newblock Nonparametric estimation of conditional var and expected shortfall.
\newblock {\em Journal of Econometrics\/}~{\em 147\/}(1), 120 -- 130.
\newblock Econometric modelling in finance and risk management: An overview.

\bibitem[\protect\citeauthoryear{Chen}{Chen}{2008}]{chen2008}
Chen, S.~X. (2008).
\newblock Nonparametric estimation of expected shortfall.
\newblock {\em Journal of Financial Econometrics\/}~{\em 6\/}(1), 87--107.

\bibitem[\protect\citeauthoryear{Chen and Tang}{Chen and
  Tang}{2005}]{chenTang2005}
Chen, S.~X. and C.~Y. Tang (2005).
\newblock Nonparametric inference of value-at-risk for dependent financial
  returns.
\newblock {\em Journal of Financial Econometrics\/}~{\em 3\/}(2), 227--255.

\bibitem[\protect\citeauthoryear{Chun, Shapiro, and Uryasev}{Chun
  et~al.}{2012}]{chun2012cvar}
Chun, S.~Y., A.~Shapiro, and S.~Uryasev (2012).
\newblock Conditional value-at-risk and average value-at-risk: Estimation and
  asymptotics.
\newblock {\em Operations Research\/}~{\em 60\/}(4), 739--756.

\bibitem[\protect\citeauthoryear{Davis, Knight, and Liu}{Davis
  et~al.}{1992}]{davis1992mestimation}
Davis, R.~A., K.~Knight, and J.~Liu (1992).
\newblock M-estimation for autoregressions with infinite variance.
\newblock {\em Stochastic Processes and their Applications\/}~{\em 40\/}(1),
  145 -- 180.

\bibitem[\protect\citeauthoryear{Fissler and Ziegel}{Fissler and
  Ziegel}{2016}]{fisszieg2015elicitability}
Fissler, T. and J.~F. Ziegel (2016).
\newblock Higher order elicitability and osband's principle.
\newblock {\em ArXiv e-prints\/}.

\bibitem[\protect\citeauthoryear{Gneiting}{Gneiting}{2011}]{gneiting2011}
Gneiting, T. (2011).
\newblock Making and evaluating point forecasts.
\newblock {\em Journal of the American Statistical Association\/}~{\em
  106\/}(494), 746--762.

\bibitem[\protect\citeauthoryear{Gourieroux, Laurent, and Scaillet}{Gourieroux
  et~al.}{2000}]{gourieroux2000}
Gourieroux, C., J.~Laurent, and O.~Scaillet (2000).
\newblock Sensitivity analysis of values at risk.
\newblock {\em Journal of Empirical Finance\/}~{\em 7\/}(3-4), 225 -- 245.
\newblock Special issue on Risk Management.

\bibitem[\protect\citeauthoryear{Hill}{Hill}{2013}]{hill2013}
Hill, J.~B. (2013).
\newblock Expected shortfall estimation and gaussian inference for infinite
  variance time series.
\newblock {\em Journal of Financial Econometrics\/}.

\bibitem[\protect\citeauthoryear{Kato}{Kato}{2012}]{kato2012}
Kato, K. (2012).
\newblock Weighted nadaraya-watson estimation of conditional expected
  shortfall.
\newblock {\em Journal of Financial Econometrics\/}~{\em 10\/}(2), 265--291.

\bibitem[\protect\citeauthoryear{{Kiefer}}{{Kiefer}}{1967}]{kiefer}
{Kiefer}, J. (1967).
\newblock {On Bahadur's representation of sample quantiles.}
\newblock {\em {Ann. Math. Stat.}\/}~{\em 38}, 1323--1342.

\bibitem[\protect\citeauthoryear{{Knight}}{{Knight}}{1998}]{knightboot}
{Knight}, K. (1998).
\newblock {Bootstrapping sample quantiles in non-regular cases.}
\newblock {\em {Stat. Probab. Lett.}\/}~{\em 37\/}(3), 259--267.

\bibitem[\protect\citeauthoryear{Knight}{Knight}{2002}]{Knight2002LimDistr}
Knight, K. (2002).
\newblock What are the limiting distributions of quantile estimators?
\newblock In Y.~Dodge (Ed.), {\em Statistical Data Analysis Based on the
  L1-Norm and Related Methods}, pp.\  47--65. Birkh\"auser Basel.

\bibitem[\protect\citeauthoryear{Linton and Xiao}{Linton and
  Xiao}{2013}]{linton2013}
Linton, O. and Z.~Xiao (2013, 001).
\newblock Estimation of and inference about the expected shortfall for time
  series with infinite variance.
\newblock {\em Econometric Theory\/}~{\em 29\/}(4), 771--807.

\bibitem[\protect\citeauthoryear{McNeil, Frey, and Embrechts}{McNeil
  et~al.}{2010}]{mcneil2010quantitative}
McNeil, A., R.~Frey, and P.~Embrechts (2010).
\newblock {\em Quantitative Risk Management: Concepts, Techniques, and Tools:
  Concepts, Techniques, and Tools}.
\newblock Princeton Series in Finance. Princeton University Press.

\bibitem[\protect\citeauthoryear{Nadarajah, Zhang, and Chan}{Nadarajah
  et~al.}{2014}]{nadarajah2014estmethfores}
Nadarajah, S., B.~Zhang, and S.~Chan (2014).
\newblock Estimation methods for expected shortfall.
\newblock {\em Quantitative Finance\/}~{\em 14\/}(2), 271--291.

\bibitem[\protect\citeauthoryear{Peracchi and Tanase}{Peracchi and
  Tanase}{2008}]{peracchi2008}
Peracchi, F. and A.~V. Tanase (2008).
\newblock On estimating the conditional expected shortfall.
\newblock {\em Applied Stochastic Models in Business and Industry\/}~{\em
  24\/}(5), 471--493.

\bibitem[\protect\citeauthoryear{Scaillet}{Scaillet}{2004}]{scaillet2004}
Scaillet, O. (2004).
\newblock Nonparametric estimation and sensitivity analysis of expected
  shortfall.
\newblock {\em Mathematical Finance\/}~{\em 14\/}(1), 115--129.

\bibitem[\protect\citeauthoryear{Smirnov}{Smirnov}{1952}]{smirnov1952limit}
Smirnov, N.~V. (1952).
\newblock Limit distribution for the terms of a variational series: (trudy
  matemati{\v{c}}eskogo instituta imeni v. a. steklova 25, 60 pp. (1949)).

\bibitem[\protect\citeauthoryear{Taylor}{Taylor}{2008}]{taylor2008}
Taylor, J.~W. (2008).
\newblock Estimating value at risk and expected shortfall using expectiles.
\newblock {\em Journal of Financial Econometrics\/}~{\em 6\/}(2), 231--252.

\bibitem[\protect\citeauthoryear{van~der Vaart}{van~der
  Vaart}{1998}]{vdv2000asympstat}
van~der Vaart, A.~W. (1998).
\newblock {\em Asymptotic Statistics}.
\newblock Cambridge Series in Statistical and Probabilistic Mathematics.
  Cambridge University Press.

\bibitem[\protect\citeauthoryear{van~der Vaart and Wellner}{van~der Vaart and
  Wellner}{1996}]{vdvwellner1996weak}
van~der Vaart, A.~W. and J.~A. Wellner (1996).
\newblock {\em Weak Convergence and Empirical Processes: With Applications to
  Statistics}.
\newblock Springer Series in Statistics. Springer.


\bibitem[\protect\citeauthoryear{Zwingmann and Holzmann}{Zwingmann and
  Holzmann}{2016}]{zwingHolzm2016}
Zwingmann, T. and H.~Holzmann (2016).
\newblock Asymptotics for the expected shortfall: Technical supplement.

\end{thebibliography}


\begin{thebibliography}{}

\bibitem[\protect\citeauthoryear{Fissler and Ziegel}{Fissler and
  Ziegel}{2016}]{fisszieg2015elicitability}
Fissler, T. and J.~F. Ziegel (2016).
\newblock Higher order elicitability and osband's principle.
\newblock {\em ArXiv e-prints\/}.

\bibitem[\protect\citeauthoryear{van~der Vaart}{van~der
  Vaart}{1998}]{vdv2000asympstat}
van~der Vaart, A.~W. (1998).
\newblock {\em Asymptotic Statistics}.
\newblock Cambridge Series in Statistical and Probabilistic Mathematics.
  Cambridge University Press.

\bibitem[\protect\citeauthoryear{Zwingmann and Holzmann}{Zwingmann and
  Holzmann}{2016}]{zwingHolz2016}
Zwingmann, T. and H.~Holzmann (2016).
\newblock Asymptotics for the expected shortfall.
\newblock {\em arXiv e-prints\/}.

\end{thebibliography}

\end{document}